\crefname{theorem}{Theorem}{Theorems}
\crefname{thm}{Theorem}{Theorems}
\crefname{lemma}{Lemma}{Lemmas}
\crefname{lem}{Lemma}{Lemmas}
\crefname{remark}{Remark}{Remarks}
\crefname{prop}{Proposition}{Propositions}
\crefname{defn}{Definition}{Definitions}
\crefname{corollary}{Corollary}{Corollaries}
\crefname{conjecture}{Conjecture}{Conjectures}
\crefname{question}{Question}{Questions}
\crefname{chapter}{Chapter}{Chapters}
\crefname{section}{Section}{Sections}
\crefname{figure}{Figure}{Figures}
\theoremstyle{plain}
\newtheorem{thm}{Theorem}[section]
\newtheorem{lemma}[thm]{Lemma}
\newtheorem{corollary}[thm]{Corollary}
\newtheorem{prop}[thm]{Proposition}
\newtheorem{conjecture}[thm]{Conjecture}
\newtheorem{question}[thm]{Question}
\theoremstyle{definition}
\theoremstyle{remark}
\newtheorem{remark}[thm]{Remark}
\numberwithin{equation}{section}
\renewcommand{\P}{\mathbb P}
\newcommand{\E}{\mathbb E}
\newcommand{\C}{\mathbb C}
\newcommand{\R}{\mathbb R}
\newcommand{\Z}{\mathbb Z}
\newcommand{\cF}{\mathcal F}
\newcommand{\sA}{\mathscr A}
\newcommand{\sB}{\mathscr B}
\newcommand{\sC}{\mathscr C}
\newcommand{\sD}{\mathscr D}
\newcommand{\sG}{\mathscr G}
\newcommand{\sH}{\mathscr H}
\newcommand{\sP}{\mathscr P}
\newcommand{\eps}{\varepsilon}
\newcommand{\Aut}{\operatorname{Aut}}
\newcommand{\bP}{\mathbf P}
\newcommand{\bE}{\mathbf E}
\newcommand{\stab}{\operatorname{Stab}}
\newcommand{\Bridges}{\operatorname{Br}}
\newcommand{\Tree}{\operatorname{Tr}}
\newcommand{\Leaf}{\operatorname{Lf}}
\def\P{\mathbb{P}}
\DeclareMathSymbol{\leqslant}{\mathalpha}{AMSa}{"36} 
\DeclareMathSymbol{\geqslant}{\mathalpha}{AMSa}{"3E} 
\DeclareMathSymbol{\eset}{\mathalpha}{AMSb}{"3F}     
\renewcommand{\epsilon}{\varepsilon}
\newcommand{\bD}{\mathbf{D}}
\newcommand{\bU}{\mathbf{U}}
\newcommand{\bM}{\mathbf{M}}
\title{\bf Supercritical percolation on nonamenable graphs: Isoperimetry, analyticity, and exponential decay of the cluster size distribution}
\renewenvironment{abstract}
 {\par\noindent\textbf{\abstractname.}\ \ignorespaces}
 {\par\medskip}
\author{{\bf Jonathan Hermon and Tom Hutchcroft}}
\begin{document}

\date{\small{\today}}

\maketitle

\setstretch{1.1}

\begin{abstract}
Let $G$  be a connected, locally finite, transitive graph, and consider Bernoulli bond percolation on $G$. 
We prove that if $G$ is nonamenable and $p > p_c(G)$ then there exists a positive constant $c_p$  such that 
\[\bP_p(n \leq |K| < \infty) \leq  e^{-c_p n}\]
for every $n\geq 1$, where $K$ is the cluster of the origin. 
We deduce the following two corollaries:
\begin{enumerate}
	\item Every infinite cluster in supercritical percolation on a transitive nonamenable graph has anchored expansion almost surely. This answers positively
    a question of Benjamini, Lyons, and Schramm (1997).  
	\item For transitive nonamenable graphs, various observables including the percolation probability, the truncated susceptibility, and the truncated two-point function are analytic functions of $p$ throughout the 
   supercritical phase.
\end{enumerate}

\end{abstract}

\section{Introduction}\label{sec:intro}

Let $G=(V,E)$ be a connected, locally finite graph. In Bernoulli bond percolation, we choose to either delete or retain each edge of $G$ independently at random with retention probability $p\in [0,1]$ to obtain a random subgraph $\omega$ of $G$ with law $\bP_p=\bP_p^G$. The connected components of $\omega$ are referred to as \textbf{clusters}, and we denote the cluster of $v$ in $\omega$ by $K_v=K_v(\omega)$. 
%
Percolation theorists are particularly interested in the geometry of the open clusters, and how this geometry changes as the parameter $p$ is varied. 
 It is natural to break this study up into several  cases according to the relationship between $p$ and 
the \textbf{critical probability}
\[p_c=p_c(G) = \inf\bigl\{p \in [0,1] : \omega \text{ has an infinite cluster $\bP_p$-a.s.}\bigr\}, \]
 which always satisfies $0<p_c<1$ when $G$ is transitive and has superlinear growth \cite{1806.07733} (this result is easier and older if $G$ has polynomial growth  \cite[Corollary 7.19]{LP:book} or exponential growth \cite{lyons1995random,Hutchcroft2016944}).  

Among the different regimes this leads one to consider, the \emph{subcritical} phase $0<p<p_c$ is by far the easiest to understand. Indeed, the basic features of subcritical percolation have been well understood since the breakthrough 1986 works of Menshikov \cite{MR852458} and Aizenman and Barsky \cite{aizenman1987sharpness} which, together with the work of Aizenman and Newman \cite{MR762034}, establish in particular that if $G$ is a connected, locally finite, transitive graph  then 
\begin{equation}
\label{eq:sharpness}
\zeta(p):= -\limsup_{n\to\infty} \frac{1}{n}\log \bP_p(n \leq |E(K_v)| < \infty) >0
\end{equation}
for every $0 \leq p < p_c$, where we write $E(K_v)$ for the set of edges that have at least one endpoint in the cluster $K_v$. See also 
\cite{duminil2015new,MR3898174,1901.10363} for alternative proofs, and \cite{MR1905854} for more refined results.  
Note that $\zeta(p)=\zeta(p,G)$ is well-defined for every $0<p<1$ and every connected, locally finite graph $G$, and in particular that the definition does not depend on the choice of the vertex $v$. 

 In contrast, the \emph{supercritical} phase $p_c<p<1$ is rather more difficult to understand, and no good theory yet exists for supercritical percolation on general transitive graphs. 
 A central role in the theory of this phase is played by the distribution of \emph{finite} clusters.
   For $\Z^d$ with $d\geq 3$, most results concerning this distribution rely crucially on the 
   important and technically challenging
    work of Grimmett and Marstrand \cite{MR1068308}, 
  which allowed in particular for a complete proof 
 of the asymptotics
\begin{align}
\label{eq:Zd_arm}
&&\bP_p\bigl(o \leftrightarrow \partial [-n,n]^d, |K_o|< \infty\bigr) & = \exp\left[-\Theta_p(n)\right] &&\qquad \text{ as $n \uparrow \infty$}\\
\text{and}&&\bP_p\bigl(n \leq |K_o| < \infty \bigr) &= \exp\left[-\Theta_p\!\left(n^{\frac{d-1}{d}}\right)\right] &&\qquad \text{ as $n \uparrow \infty$.}
\label{eq:Zd_body}
\end{align}
  The upper bounds of \eqref{eq:Zd_arm} and \eqref{eq:Zd_body} were proven by Chayes, Chayes, Grimmett, Kesten, and Schonmann \cite{MR1048927} and by Kesten and Zhang \cite{MR1055419} respectively, both conditional on the then-conjectural Grimmett-Marstrand theorem. All of these upper bounds rely essentially on \emph{renormalization}, a technique that is unavailable outside the Euclidean setting. 
The lower bound of \eqref{eq:Zd_arm} is trivial, while the lower bound 
 \eqref{eq:Zd_body} was proven by Aizenman, Delyon, and Souillard \cite{MR594824}; 
   see also \cite[Sections 8.4-8.6]{grimmett2010percolation}.
 The more refined properties of finite clusters in supercritical percolation on $\mathbb{Z}^d$  remain an area of active research, see e.g.\ \cite{MR2241754,MR3602778} and references therein. We note moreover that percolation in the \emph{slightly supercritical} $(p \downarrow p_c)$ regime remains very poorly understood on $\Z^d$ even when $d$ is large, in which case \emph{critical} ($p=p_c$) percolation is now well understood following in particular the work of Hara and Slade \cite{MR1043524}; See \cite{heydenreich2015progress} for an overview of progress and problems in this direction. 


In this paper, we develop a theory of  supercritical percolation on \emph{nonamenable} transitive graphs, studying in particular the distribution of finite clusters, the geometry of the infinite clusters, and the regularity of the dependence of various observables on $p$. 
%
Our main result, from which various corollaries will be derived, is the following theorem. An extension of the theorem to the quasi-transitive case is given in \cref{sec:QT}.

\begin{thm}
\label{thm:main}
Let $G=(V,E)$ be a connected, locally finite, nonamenable, transitive graph. Then 
$\zeta(p)>0$ 
for every $p_c < p \leq 1$.
\end{thm}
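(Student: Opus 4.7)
The plan is a sprinkling argument combining two ingredients: the deterministic linear edge-isoperimetric inequality $|\partial_E S|\geq h|S|$ that nonamenability imposes on every finite $S\subseteq V$, and the positivity of $\theta(p'):=\bP_{p'}(|K_v|=\infty)$ for every $p'>p_c$. A naive Peierls bound $\bP_p(|E(K_v)|=m<\infty)\leq(1-p)^{\Omega(m)}\cdot\#\{\text{lattice animals of size }m\}$ yields exponential decay only once $p$ exceeds a combinatorial threshold of the form $1-C^{-1/h}$, where $C$ bounds the exponential growth rate of lattice animals containing $v$; since $p_c$ can lie far below this threshold, the sprinkling is what should close the gap.

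Concretely, I would fix $p_c<p'<p$, set $q:=(p-p')/(1-p')$, and couple $\omega\sim\bP_p$ as $\omega=\omega'\cup\xi$ with $\omega'\sim\bP_{p'}$ and $\xi\sim\bP_q$ independent. On the event $F_n:=\{n\le |E(K_v(\omega))|<\infty\}$, nonamenability forces $K_v(\omega)$ to have $\Omega(n)$ distinct exterior boundary vertices, and each of the $\Omega(n)$ boundary edges is closed in both $\omega'$ and $\xi$. The heart of the argument is to show that, after a carefully controlled BFS exploration of $K_v(\omega)$ that queries edges of $\omega$ one at a time, a positive fraction of these exterior boundary vertices typically lie in infinite $\omega'$-clusters through unexplored edges, with conditional probability tending to one exponentially fast in $n$. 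Whenever this ``threat'' is realized, $F_n$ additionally requires every $\xi$-edge joining $K_v(\omega)$ to such a threatening exterior vertex to be closed, which contributes an independent factor $(1-q)^{\Omega(n)}$. Summing over the possible realizations of $K_v(\omega)$ against the lattice-animal count then yields $\bP_p(F_n)\leq e^{-c(p,p')n}$ provided $p'$ is taken close enough to $p$ that $q$ dominates the lattice-animal growth rate.

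The principal obstacle is the decoupling needed to extract the positive density of exterior boundary vertices lying in infinite $\omega'$-clusters. Since $K_v(\omega)$ depends jointly on $\omega'$ and $\xi$, conditioning on it biases the $\omega'$-configuration near its boundary, and so the uniform lower bound $\theta(p')>0$ cannot be invoked directly on exterior vertices. Overcoming this will likely require a stopping-time exploration that defers querying $\omega'$ on edges disjoint from the explored region, combined with a finite-energy / insertion-tolerance estimate relating the conditional law of $\omega'$ on unexplored edges to fresh Bernoulli-$p'$ percolation, after which transitivity supplies the desired uniform positive density. A secondary subtlety is that on nonamenable graphs the infinite cluster of $\omega'$ need not be unique when $p_c<p'<p_u$, so the argument should be phrased in terms of connection to \emph{some} infinite $\omega'$-cluster rather than to a designated one.
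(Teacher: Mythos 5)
There are two fatal gaps in this plan, and together they confine it to the regime already covered by the naive Peierls bound. First, the sprinkling bookkeeping is circular: your exploration only learns that an edge lies in $\partial K_v(\omega)$ by querying $\omega=\omega'\cup\xi$ and finding it closed, at which point its $\xi$-status is already revealed. Conditional on the realization of $K_v(\omega)$ there is no leftover independent factor $(1-q)$ per ``threatening'' boundary edge: the closedness of all boundary edges (in $\xi$ as well as $\omega'$) is already priced into $\bP_p(K_v=H)=p^{|E_o(H)|}(1-p)^{|\partial H|}$. To obtain genuinely unexamined $\xi$-edges you would have to identify the threatening edges from $\omega'$-measurable data alone (say, from the boundary of $K_v(\omega')$), but then the event that $K_v(\omega)$ is large and finite does not force $K_v(\omega')$ to have many threatening boundary edges---$K_v(\omega')$ may be a single vertex while $K_v(\omega)$ is huge---so one is thrown back on controlling how the cluster grows under sprinkling, which is exactly the hard step. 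Second, the closing union bound over lattice animals cannot work for all $p>p_c$: to beat the exponential animal count you need $q$ bounded away from $0$ (indeed large), but $q=(p-p')/(1-p')\leq (p-p_c)/(1-p_c)$ is forced to be arbitrarily small as $p\downarrow p_c$, and taking $p'$ close to $p$, as you propose, makes $q$ smaller still---the parameter points the wrong way. So the scheme can only close when $1-q$ beats the animal growth rate, i.e.\ in a ``$p$ close to $1$'' regime of the same nature as the counting argument of Benjamini and Schramm that you correctly identify as insufficient; it gives nothing for $p$ just above $p_c$, which is the whole content of the theorem.

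A further gap is that the ``heart of the argument'' is asserted in a form stronger than what is available: what can actually be proved (and what the paper proves via \cref{lem:furcations,prop:BurtonKeane,prop:NegativeTerm}, by running a Burton--Keane furcation argument on an invariant forest stochastically dominated by $\bP_p$---this is where nonamenability, transitivity and $p>p_c$ enter, and it handles nonuniqueness automatically) is a \emph{first-moment} bound: conditionally on $K_v$ finite, the expected number of closed boundary edges pivotal for connection to infinity is at least $c_p E_v$. Your approach needs an exponential-concentration version of this (``conditional probability tending to one exponentially fast''), which does not follow from $\theta(p')>0$ plus insertion tolerance and is not known. The paper's route is genuinely different and avoids sprinkling altogether: it feeds the first-moment furcation bound into the Russo-formula identity \eqref{eq:TwoDerivatives} as a lower bound on the ``death'' term $\bD_{p,n}[e^{tE_v}]$, and then must control the ``growth'' term $\bU_{p,n}[e^{tE_v}]$ by the skinny-trees-of-bridges induction (\cref{prop:Skinny_Br}) and the martingale term by Azuma (\cref{prop:big_fluctuation}). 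Your proposal has no analogue of the growth-term control, which is the technical core of the proof; without it, even granting your concentration claim, the argument does not close.
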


We stress that our methods are completely different to those used in the Euclidean context. 
We also note that we do \emph{not} study the question of the (non)uniqueness of the infinite cluster, which remains completely open at this level of generality. We refer the interested reader to \cite{Hutchcroftnonunimodularperc,1804.10191} and references therein for an up-to-date account of what is known regarding this question and the closely related problem of understanding \emph{critical} percolation on nonamenable transitive graphs.

Here, we recall that a locally finite graph $G$ is said to be \textbf{nonamenable} if its \textbf{Cheeger constant}
\[
\Phi_E(G)=\inf\left\{\frac{|\partial_E K|}{\sum_{v\in K} \deg(v)} : K \subseteq V \text{ finite}\right\}
\]
is positive, where for each set $K \subset V$ we write $\partial_E K$ for the set of edges with one endpoint in $K$ and the other outside of $K$.
  For example, Euclidean lattices such as $\Z^d$ are amenable, while regular trees of degree at least three and transitive tessellations of hyperbolic spaces are nonamenable. 
Previously, \cref{thm:main}  was known only for $p > (1-\Phi_E(G))/(1+\Phi_E(G))$, where it follows by a simple counting argument \cite[Theorem 2]{bperc96} and does not require transitivity.
 We believe that the full conclusions of \cref{thm:main} were only previously known for trees. Note that the theorem fails without the assumption of transitivity: For example, the graph obtained by attaching a $3$-regular tree and a $4$-regular tree by a single edge between their respective origins has $p_c=1/3$ and $\zeta(1/2)=0$.

It was observed in \cite{MR2677006} that the argument of \cite{MR594824} can be generalized to prove that 
 $\zeta(p)=0$ for every $p>p_c$ whenever $G$ is a Cayley graph of a \emph{finitely presented} amenable group. In \cref{cor:amenable} we prove via a different argument that in fact $\zeta(p)=0$ for \emph{every}  amenable transitive graph and every $p_c \leq p <1$. This gives a converse to \cref{thm:main}, so that, combining both results, we obtain the following appealing percolation-theoretic characterization of nonamenability for transitive graphs:

\begin{corollary}
\label{cor:dichotomy}
Let $G$ be a connected, locally finite, transitive graph. The following are equivalent:
\begin{enumerate}
	\itemsep0em
\item $G$ is nonamenable.
\item $p_c(G)<1$ and $\zeta(p)>0$ for every $p\in (p_c,1)$.
\item $p_c(G)<1$ and $\zeta(p)>0$ for some $p\in (p_c,1)$.
\end{enumerate}
\end{corollary}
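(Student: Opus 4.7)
The plan is to close the loop $(1)\Rightarrow(2)\Rightarrow(3)\Rightarrow(1)$, outsourcing the substantive content to \cref{thm:main} and to the amenable counterpart \cref{cor:amenable} announced in the paragraph immediately preceding the statement.

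For $(1)\Rightarrow(2)$, I first need that a nonamenable transitive graph satisfies $p_c(G)<1$. The quickest route is to invoke the Benjamini--Schramm--Peres counting bound already recalled in the paragraph preceding this corollary, which yields $p_c(G)\le (1-\Phi_E(G))/(1+\Phi_E(G))<1$ as soon as $\Phi_E(G)>0$. Alternatively one may note that nonamenability of a bounded-degree graph forces exponential volume growth (iterating the edge-isoperimetric inequality gives $|B(v,n+1)|\ge (1+\Phi_E(G))|B(v,n)|$ in the regular case) and then quote the result of Duminil-Copin--Goswami--Raoufi--Severo--Yadin cited in the introduction. Once $p_c<1$ is in hand, the conclusion $\zeta(p)>0$ for every $p\in (p_c,1)$ is exactly the content of \cref{thm:main}.

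The implication $(2)\Rightarrow(3)$ is trivial, since $p_c<1$ ensures that $(p_c,1)$ is a non-empty interval on which one may pick $p$. For $(3)\Rightarrow(1)$ I would argue by contrapositive: suppose $G$ is amenable. Then \cref{cor:amenable} asserts that $\zeta(p)=0$ for every $p_c\le p<1$. If $p_c=1$ the interval in $(3)$ is empty and $(3)$ fails vacuously, while if $p_c<1$ the vanishing of $\zeta$ throughout $[p_c,1)$ directly contradicts the second clause of $(3)$. Either way $(3)$ cannot hold, so $(3)$ forces nonamenability.

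There is no genuine obstacle internal to this corollary: its entire content is carried by \cref{thm:main} and \cref{cor:amenable}, and the dichotomy is just the assembly of those two statements together with the elementary observation that nonamenable transitive graphs have $p_c<1$. The only thing one should be slightly careful about is the degenerate case $p_c=1$ under amenability, which however only simplifies the argument.
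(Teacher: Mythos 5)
Your proposal is correct and follows essentially the same route as the paper, whose proof is literally the assembly of \cref{thm:main} with \cref{cor:amenable} (plus the standard fact, recalled in the introduction, that nonamenability forces $p_c<1$), exactly as you do via the cycle $(1)\Rightarrow(2)\Rightarrow(3)\Rightarrow(1)$. The only quibble is cosmetic: the counting argument of \cite{bperc96} gives $p_c \leq 1/(1+\Phi_E(G))$ rather than the constant $(1-\Phi_E(G))/(1+\Phi_E(G))$ you quote, but either bound (or your exponential-growth alternative) yields $p_c<1$, so nothing in the argument is affected.
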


We note in particular that \cref{thm:main,cor:dichotomy} resolve several questions raised by Bandyopadhyay, Steif, and Tim\'ar \cite[Questions 1 and 2 and Conjecture 1]{MR2677006}.

\subsection{Analyticity}

\medskip

One nice corollary of \cref{thm:main} is that many quantities including the percolation probability $\theta(p)=\bP_p(|K_v|=\infty)$, the truncated susceptibility $\chi^f(p)=\bE_p |K_v| \mathbbm{1}(|K_v|<\infty)$, the free energy $\kappa(p)=\bE_p |K_v|^{-1}$, and the truncated two-point function $\tau^f_p(u,v) = \bP_p(u \leftrightarrow v, |K_v|<\infty)$ all depend analytically on $p$ throughout the entire supercritical phase. 
The regularity properties of these functions have historically been a subject of great importance in percolation, motivated in part  by the still uncompleted project  to rigorize the heuristic computation of $p_c$ for various planar lattices by Sykes and Essam \cite{sykes1964exact}. See e.g.  \cite{kesten1981analyticity,grimmett2010percolation,georgakopoulos2018analyticity} for further background. 


Let us now state our general analyticity result.
Let $\sH_v$ denote the set of finite connected subgraphs of $G$ containing $v$, and say that a function $F : \sH_v \to \C$  has \textbf{subexponential growth} if $\limsup_{n\to\infty} \frac{1}{n} \log \sup\{ |F(H)| : H \in \sH_v,\, |E(H)| \leq n \} \leq 0$. It is an observation essentially due to Kesten \cite{kesten1981analyticity,MR692943}, and a consequence of Morera's Theorem and the Weierstrass $M$-test, that if $F:\sH_v \to \C$ has subexponential growth then the function
\[
\bE_p\left[F(K_v) \mathbbm{1}(|K_v| < \infty) \right] = \sum_{H \in \sH_v} F(H) \bP_p(K_v=H) 
\]
is analytic in $p$ on the set $\{p\in (0,1) : \zeta(p)>0\}$, which always contains $(0,p_c)$ when $G$ is transitive as discussed above.
 More precisely, this means that for every $p_0 \in (0,1)$ with $\zeta(p_0)>0$ there exists $\eps>0$ and a complex analytic function on the complex ball of radius $\eps$ around $p_0$ whose restriction to $(p_0-\eps,p_0+\eps)$ agrees with the function under consideration.  See \cref{prop:analytic} for details.
Thus, \cref{thm:main} has the following corollary.

\begin{corollary}
\label{cor:analyticity}
Let $G=(V,E)$ be a connected, locally finite, nonamenable, transitive graph, let $v \in V$ and let $F:\sH_v \to \C$ have subexponential growth. 
Then 
$\bE_p\left[ F(K_v) \mathbbm{1}(|K_v| < \infty) \right]$
is an analytic function of $p$ on $(0,p_c) \cup (p_c,1)$.
\end{corollary}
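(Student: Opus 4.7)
The plan is to combine three ingredients: the Kesten-style analyticity criterion already recalled in the paper as \cref{prop:analytic}, the sharpness of the subcritical phase (equation \eqref{eq:sharpness} of Menshikov and Aizenman--Barsky), and the main theorem \cref{thm:main}. The statement asserts analyticity on the open set $(0,p_c)\cup(p_c,1)$, which is exactly the subset of $(0,1)$ where $\zeta(p)>0$ under our hypotheses.

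In more detail, I would proceed as follows. First I would invoke \cref{prop:analytic} (the Kesten-type observation) to reduce the corollary to the statement that the set
\[
\{p\in(0,1) : \zeta(p)>0\}
\]
contains $(0,p_c)\cup(p_c,1)$. For the subcritical interval $p\in(0,p_c)$, this inclusion is exactly the content of \eqref{eq:sharpness}. For the supercritical interval $p\in(p_c,1)$, this inclusion is the conclusion of \cref{thm:main}. Combining these two facts and applying \cref{prop:analytic} yields the claimed analyticity on $(0,p_c)\cup(p_c,1)$.

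There is essentially no obstacle in the argument itself once \cref{thm:main} is granted; the only care needed is to match the hypotheses of \cref{prop:analytic}, namely that $F:\sH_v\to\mathbb{C}$ has subexponential growth, with the strict exponential decay provided by $\zeta(p)>0$, so that the Weierstrass $M$-test applied to the power series
\[
\mathbb{E}_p\bigl[F(K_v)\mathbf{1}(|K_v|<\infty)\bigr] = \sum_{H\in\sH_v} F(H)\, p^{|E(H)|}(1-p)^{|\partial_E H|}
\]
produces a convergent and hence analytic extension on a complex neighborhood of each real point of $\{p:\zeta(p)>0\}$. Since this verification is already packaged in \cref{prop:analytic}, the corollary follows immediately. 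Note also that one does not obtain analyticity at $p_c$ itself because in general $\theta(p_c)$ may or may not be continuous there, and in any case the two analytic branches on the two subintervals need not match across $p_c$.
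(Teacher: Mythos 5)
Your argument is correct and is essentially the paper's own proof: the paper likewise deduces the corollary immediately from \cref{thm:main} together with \cref{prop:analytic}, with the subcritical interval covered by the sharpness statement \eqref{eq:sharpness} as noted in the introduction. The only cosmetic slip is that the cluster weights should be $p^{|E_o(H)|}(1-p)^{|\partial H|}$ rather than $p^{|E(H)|}(1-p)^{|\partial_E H|}$, which does not affect the argument.
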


Previously, it was not even known that $\theta(p)$ was \emph{differentiable} on $(p_c,1)$ under the same hypotheses. On the other hand, it is an immediate  consequence of the mean-field lower bound on $\theta(p_c+\eps)$ \cite[Theorem 1.1]{duminil2015new} that the percolation probability $\theta(p)$ is always non-differentiable at $p_c$ when $G$ is transitive, so that \cref{cor:analyticity} is best-possible in this case. (Note that analyticity of $\theta(p)$ follows from \cref{cor:analyticity} by taking $F \equiv 1$.) It remains open to prove that the free energy  (a.k.a.\ open-clusters-per-vertex) $\kappa(p)=\bE_p |K_v|^{-1}$ is \emph{not} analytic at $p_c$.

We remark that in \emph{amenable} transitive graphs, subexponential decay of the cluster volume distribution makes analyticity in the supercritical phase a much more delicate issue. Indeed, while  \emph{infinite differentiability} of $\theta(p)$, $\chi^f(p)$, $\kappa(p)$, and $\tau_p^f(u,v)$ in the supercritical phase of $\Z^d$ is an easy consequence of superpolynomial decay \cite[Theorem 8.92]{grimmett2010percolation}, the \emph{analyticity} of these functions was settled only in the very recent work of Georgakopoulos and Panagiotis~\cite{georgakopoulos2018analyticity,georgakopoulos2020analyticity}.

\subsection{Isoperimetry and random walk}

In this section, we discuss applications of \cref{thm:main} to the isoperimetry of infinite percolation clusters on nonamenable transitive graphs. It is easily seen that percolation clusters cannot be nonamenable when $p<1$, since infinite clusters will always contain arbitrarily large isoperimetrically `bad regions', such as long paths. Nevertheless, one has the intuition that supercritical percolation clusters on nonamenable transitive graphs ought to be `essentially nonamenable' in some sense. 
With the aim of making this intuition precise, Benjamini, Lyons, and Schramm \cite{BLS99}  defined the \textbf{anchored Cheeger constant} of a connected, locally finite graph $G=(V,E)$ to be
\[
\Phi^*_E(G):=\lim_{n\to\infty}\inf\left\{\frac{|\partial_E K|}{\sum_{u\in K} \deg(u)} : K \subseteq V \text{ connected},\, v\in K,\, \text{ and } n\leq |K|<\infty\right\},
\]
where $v$ is a vertex of $G$ whose choice does not affect the value obtained, and said that $G$ has \textbf{anchored expansion} if $\Phi_E^*(G)>0$. They asked \cite[Question 6.5]{BLS99} whether every infinite cluster in supercritical Bernoulli bond percolation on $G$ has anchored expansion whenever $G$ is transitive\footnote{In fact they stated the question without the assumption of transitivity. An example showing that the question has a negative answer without this assumption is given in \cref{remark:nontransitive_counterexample}.} and nonamenable. (See also \cite[Question 6.49]{LP:book}.)  Partial progress on this question was made by Chen, Peres, and Pete \cite{chen2004anchored}, who proved in particular that every infinite cluster has anchored expansion a.s.\ for $p$ sufficiently close to $1$. Their result does not require transitivity. Their paper also treats some other related models, most notably establishing anchored expansion for supercritical Galton-Watson trees. Anchored expansion has subsequently been established for various other random graph models, see \cite{PSHIT,MR3536537,BPP14}.

Applying \cref{thm:main} together with the argument of \cite[Theorem A.1]{chen2004anchored}, we are able to give a complete solution to \cite[Question 6.5]{BLS99}. 

\begin{corollary}
\label{cor:anchored_expansion}
Let $G$ be a connected, locally finite, nonamenable, transitive graph, and let $p_c<p \leq 1$. Then every infinite cluster in Bernoulli-$p$ bond percolation on $G$ has anchored expansion almost surely.
\end{corollary}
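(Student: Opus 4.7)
The plan is to combine the exponential tail bound
$\bP_p(n \le |K_v| < \infty) \le Ce^{-\zeta(p)n}$
furnished by \cref{thm:main} with a Peierls-type argument in the spirit of Chen, Peres, and Pete \cite[Theorem~A.1]{chen2004anchored}. The case $p=1$ is immediate since $K_v \equiv G$ is itself nonamenable, so fix $p \in (p_c,1)$ and write $h := \Phi_E(G) > 0$ and $D$ for the common degree. For each $\alpha \in (0,h)$ and $n \ge 1$, let $\mathscr{B}_n(\alpha)$ be the event that $|K_v|=\infty$ and some $K \subseteq V(K_v)$ with $v \in K$ and $n \le |K| < \infty$ is connected in $K_v$ and satisfies $|\partial_E K|/\sum_{u\in K}\deg(u) \le \alpha$ (with boundary and degrees computed in $K_v$). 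Since $\mathscr{B}_n(\alpha)$ is decreasing in $n$, it suffices to exhibit some $\alpha > 0$ with $\bP_p(\mathscr{B}_n(\alpha)) \to 0$; by transitivity this yields $\Phi_E^*(K_v) \ge \alpha > 0$ a.s.\ on $\{|K_v|=\infty\}$, and hence anchored expansion of every infinite cluster.

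I would bound $\bP_p(\mathscr{B}_n(\alpha))$ by a union bound over the possible witnesses $K$. A standard bounded-degree counting shows there are at most $(eD)^k$ connected $K \ni v$ of size $k$. For a single such $K$, nonamenability forces $|\partial_E^G K| \ge hDk$ while the event on $K$ constrains at most $\alpha Dk$ of these boundary edges to be open in $\omega$; hence at least $(h-\alpha)Dk$ are closed, incident to $\Omega(k)$ distinct vertices $w \in V(G) \setminus K$ adjacent to $K$ through a closed edge. A charge-balance argument shows that at most $O(\alpha k)$ of these $w$ are \emph{re-entrant} (i.e., lie in $V(K_v)\setminus K$ but have no open edge back to $K$, so reach $v$ via an open path necessarily crossing $\partial_E^{K_v}K$, and can therefore be charged to the few open boundary edges). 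The remaining $\Omega(k)$ vertices lie in clusters disjoint from $K_v$; thinning them to be pairwise at graph distance $\ge 2$ in $G$ ensures disjointness of their clusters, and applying \cref{thm:main} to each yields a per-$K$ bound $e^{-\eta k}$ with $\eta=\eta(\alpha)\to\infty$ as $\alpha\downarrow 0$. Choosing $\alpha$ so that $\eta > \log(eD)$ closes the sum $\sum_{k\ge n}(eD)^k e^{-\eta k}\to 0$.

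The main obstacle is the possibility of multiple infinite clusters, which is not ruled out on general nonamenable transitive graphs in the entire supercritical phase. A disjoint-from-$K_v$ neighbor $w$ could a priori lie in a \emph{different} infinite cluster, for which \cref{thm:main} does not directly apply. I would handle this by partitioning the witnesses further into ``finite-cluster'' and ``other-infinite-cluster'' subsets and showing that the latter contributes negligibly: two disjoint infinite clusters both touching the $G$-neighborhood of the same finite set $K$ is a strong constraint, and the resulting configurations can either be absorbed into the $\alpha Dk$ budget or estimated using a second application of \cref{thm:main} to finite sub-portions of each cluster. Making this mass-partitioning quantitatively precise is the principal technical step in adapting the CPP argument to the present, possibly non-unique, setting.
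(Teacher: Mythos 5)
There is a genuine gap, and it sits exactly where your union bound needs to beat the lattice-animal entropy. Your plan requires a per-witness bound $\bP_p(A_K)\le e^{-\eta(\alpha)k}$ with $\eta(\alpha)\to\infty$ as $\alpha\downarrow 0$, but the mechanism you propose cannot produce such a rate. \cref{thm:main} gives exponential decay in the size of a \emph{large finite} cluster; the clusters of the neighbours $w$ reached through closed boundary edges and lying outside $K_v$ need not be large at all (they may be singletons), so ``applying \cref{thm:main} to each'' yields at best a constant factor $1-\theta(p)<1$ per thinned vertex. The only genuine exponential decay available per witness comes from forcing $\Omega(k)$ boundary edges to be closed, and that rate is at most of order $hD\log\frac{1}{1-p}$ --- it is bounded as $\alpha\downarrow 0$ and has no reason to exceed $\log(eD)$ for general $p>p_c$. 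This is precisely why the naive Peierls/counting argument only proves anchored expansion (and $\zeta(p)>0$) for $p$ close to $1$, as in \cite{bperc96,chen2004anchored}. In addition, your charge-balance claim that only $O(\alpha k)$ of the $w$'s are re-entrant is false as stated: every open path from a re-entrant vertex to $v$ must cross one of the $\le\alpha Dk$ open boundary edges, but arbitrarily many re-entrant vertices can be charged to the \emph{same} open edge (a single open edge can support a large portion of $K_v$ wrapping around $K$ and meeting all the closed boundary edges), so the count of re-entrant vertices is not controlled by the number of open boundary edges. Finally, the multiple-infinite-cluster issue is flagged but not resolved, and it is not a minor technicality in your scheme.

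The way to make the CPP argument work --- and what the paper does in \cref{prop:anchored_explicit} --- is to avoid counting witnesses altogether. For a fixed connected $H\ni v$ with $|E(H)|=n$ and exactly $m$ open edges in $\partial H$, one compares directly with the event $\{K_v=H\}$: closing those $m$ edges costs a factor $\binom{|\partial H|}{m}\bigl(\frac{p}{1-p}\bigr)^m$, i.e.
\[
\bP_p\bigl(H\subseteq K_v,\ |\partial_\omega H|=m\bigr)\le\binom{n}{m}\Bigl(\frac{p}{1-p}\Bigr)^{m}\bP_p(K_v=H),
\]
and then the sum over all witnesses $H$ with $|E(H)|\ge n$ of $\bP_p(K_v=H)$ is exactly the cluster-size tail controlled by \cref{thm:main}. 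The entropy of the witness is thus paid for by the probability that the cluster equals that witness, the only loss being the binomial/energy factor $\exp\bigl[(H(\alpha)+\alpha\log\frac{p}{1-p})n\bigr]$, which is beaten by $e^{-\zeta(p)n}$ once $\alpha$ is small; no independence, no thinning, and no discussion of other infinite clusters is needed. I would redo your argument along these lines rather than trying to repair the lattice-animal union bound.
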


 \cref{cor:anchored_expansion} also allows us to analyze the behaviour of  the simple random walk on the infinite clusters of Bernoulli percolation under the same hypotheses. Indeed, it is a theorem of Vir\'ag \cite[Theorem 1.2]{v00} that if $G$ is a bounded degree graph with anchored expansion then there exists a positive constant $c$ such that the simple random walk return probabilities on $G$ satisfy the inequality $p_n(v,v) \leq C_v e^{-cn^{1/3}}$ for every vertex $v$ of $G$, where $C_v$ is a $v$-dependent constant.
In \cref{subsec:randomwalk} we establish that a matching lower bound also holds in our setting, thereby deducing the following corollary. We write $p_n^\omega(v,v)$ for the $n$-step return probability of simple random walk from $v$ on the percolation configuration $\omega$.

\begin{corollary}
\label{cor:random_walk}
Let $G$ be a connected, locally finite, nonamenable, transitive graph, let $p_c<p < 1$, and let $v$ be a vertex of $G$. Then 
\[
0 <
 \liminf_{n\to\infty}\frac{-\log p_{2n}^\omega(v,v)}{n^{1/3}} \leq \limsup_{n\to\infty} \frac{-\log p_{2n}^\omega(v,v)}{n^{1/3}} 
 < \infty
\]
almost surely on the event that the cluster of $v$ is infinite.
\end{corollary}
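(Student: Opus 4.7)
\textbf{Upper bound on the heat kernel (giving $\liminf > 0$).} The bound $p_{2n}^\omega(v,v) \leq C(\omega)\exp(-cn^{1/3})$ follows immediately by combining Corollary \ref{cor:anchored_expansion}, which ensures that $K_v$ almost surely has anchored expansion on $\{|K_v|=\infty\}$, with Vir\'ag's theorem \cite[Theorem 1.2]{v00}, which converts anchored expansion into a heat kernel upper bound of the desired form.

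\textbf{Lower bound on the heat kernel (giving $\limsup < \infty$).} The strategy is to exhibit, for each large $n$, a long ``tube trap'' in $K_v$ close to $v$. I call a self-avoiding path $w_0, w_1, \ldots, w_r$ in $K_v$ a \emph{tube of length $r$} if its interior vertices $w_1, \ldots, w_{r-1}$ each have degree exactly $2$ in $K_v$; the vertex set $S_r := \{w_0, \ldots, w_r\}$ then satisfies $|\partial_E^{K_v} S_r| \leq 2$. The key geometric claim is that there exists a (random) constant $\alpha = \alpha(\omega)$ such that, almost surely on $\{|K_v|=\infty\}$, for all sufficiently large $r$ the cluster $K_v$ contains a tube of length $r$ one of whose endpoints $u_r$ satisfies $d_{K_v}(v, u_r) \leq \alpha r$. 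Given such a tube, one obtains a specific-trajectory lower bound on $p_{2n}^\omega(v,v)$ by considering the walk that traces a $K_v$-geodesic $v \to u_r$ in at most $\alpha r$ steps, stays inside $S_r$ for the intermediate $2n - 2\alpha r$ steps and returns to $u_r$, and then retraces the geodesic back to $v$:
\[
p_{2n}^\omega(v,v) \;\geq\; \Delta^{-2\alpha r} \cdot c\, r^{-1}\exp\!\bigl(-C n/r^2\bigr),
\]
where $\Delta$ is the common degree of $G$ and the middle factor comes from Cheeger's inequality (or direct spectral analysis) applied to simple random walk restricted to the tube, which has spectral gap of order $1/r^2$. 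Setting $r = \lfloor n^{1/3}\rfloor$ balances the two exponential factors and yields $p_{2n}^\omega(v,v) \geq \exp(-\Theta(n^{1/3}))$ for all sufficiently large $n$, as required.

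\textbf{Existence of tubes --- main obstacle.} The remaining task is the geometric claim. I would use \emph{insertion tolerance}: for any specified self-avoiding path $\pi$ of length $r$ in $G$, the local surgery that opens every edge of $\pi$ and closes every other edge incident to the $r-1$ interior vertices of $\pi$ realizes $\pi$ as a tube under $\bP_p$ with probability at least $(p(1-p)^{\Delta-2})^r = e^{-Cr}$. An endpoint of $\pi$ then lies in $K_v$ with conditional probability bounded below by a positive constant, since $K_v$-connectivity can be achieved through the unaffected part of the configuration. The exponential ball growth $|B^{K_v}_{\alpha r}(v)| \geq e^{\mu \alpha r}$ for some $\mu > 0$, inherited from anchored expansion via Corollary \ref{cor:anchored_expansion}, implies that for any $\alpha > C/\mu$ the expected number of suitable tubes anchored inside $B^{K_v}_{\alpha r}(v)$ is exponentially large in $r$. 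A second-moment or Borel--Cantelli argument, with failure probability summable in $r$, then upgrades this to the required almost-sure-for-all-large-$r$ statement. The technical heart of the argument is controlling correlations between the surgery events at different candidate tubes; the cleanest approach is likely to restrict attention to an edge-disjoint sub-collection of candidates inside the ball, thereby obtaining genuine independence of the surgery events, and to verify via an isoperimetric-growth lemma that sufficiently many such edge-disjoint candidates exist.
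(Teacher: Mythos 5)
Your overall strategy coincides with the paper's: the $\liminf>0$ direction is exactly \cref{cor:anchored_expansion} plus Vir\'ag's theorem, and the $\limsup<\infty$ direction is obtained by trapping the walk in a long degree-two ``pipe'' of length $\Theta(n^{1/3})$ near $v$, with the same balancing of the cost $e^{-\Theta(r)}$ of reaching the pipe against the confinement cost $e^{-\Theta(n/r^2)}$. The trajectory estimate is fine (modulo the inaccurate remark that $|\partial_E^{K_v}S_r|\le 2$ --- the endpoints of the tube may have many incident edges, so one should run the long excursion in the interior of the tube, away from its endpoints, which is the standard fix and costs only another $e^{-O(r)}$).

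The genuine gap is in the existence of the tubes, which you rightly flag as the main obstacle but do not resolve correctly. Your surgery bound $(p(1-p)^{\Delta-2})^r$ for a \emph{fixed} path is fine, but your mechanism for ensuring the resulting tube belongs to $K_v$ --- ``an endpoint of $\pi$ lies in $K_v$ with conditional probability bounded below by a positive constant'' --- is both unjustified and incompatible with the independence you need for a Borel--Cantelli bound. The events $\{$endpoint of $\pi_i$ connected to $v\}$ for different candidates are not made independent by edge-disjointness of the candidate paths: they are correlated through the shared ambient configuration, and, worse, the surgery at one candidate closes edges incident to its interior vertices and can thereby sever the connection to $v$ required by another candidate. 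So restricting to an edge-disjoint subfamily gives independence of the surgery events only, not of the full success events, and your claimed summable failure probability does not follow. The way the paper handles this is to make the attachment to $K_v$ automatic rather than probabilistic: one explores the intrinsic ball $B_\mathrm{int}(v,n)$, uses anchored expansion to find exponentially many vertices of $\partial B_\mathrm{int}(v,n)$ lying on intrinsic geodesics to $\partial B_\mathrm{int}(v,n+\lceil c_1 n\rceil)$ (comparing the growth rate $g_1^n$ with the ambient volume bound $M^n$), so that each such anchor admits a \emph{not necessarily open} path of length $\lceil c_1 n\rceil$ in $G$ avoiding the explored ball; one then greedily selects a well-separated sub-collection of anchors \emph{measurably with respect to} $B_\mathrm{int}(v,n)$. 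Conditionally on the explored ball, the surgeries at distinct anchors involve disjoint sets of unexplored edges, hence are genuinely independent, and each success produces a pipe that is attached to the already-revealed cluster at its anchor, so it lies in $K_v$ with no further connectivity event needed. With $e^{\Theta(n)}$ independent trials each succeeding with probability $e^{-O(n)}$ (and the exponents arranged to compete favourably), the conditional failure probability is doubly exponentially small, hence summable, and Borel--Cantelli gives the almost-sure statement. Your sketch is missing precisely these three ingredients: anchoring at vertices already known to be in the cluster, candidate paths that avoid the revealed region, and measurability of the anchor set with respect to the revealed region so that the conditional surgery probabilities remain valid.
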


\begin{remark}
 Vir\'ag \cite[Theorem 1.1]{v00} also established that anchored expansion implies \emph{positive speed} of the random walk on bounded degree graphs. For supercritical percolation on \emph{unimodular} transitive nonamenable graphs, positive speed of the random walk was already proven to hold by Benjamini, Lyons, and Schramm \cite[Theorem 1.3]{BLS99}. Combining \cref{cor:anchored_expansion} with the aforementioned result of Vir\'ag allows us to extend this result to the \emph{nonunimodular} case. The resulting fact that every cluster is \emph{transient} in supercritical percolation on nonunimodular transitive graphs was first proven by Tang \cite{tang2018heavy}.
\end{remark}


The same $\exp(-\Theta(n^{1/3}))$ return probability asymptotics given by \cref{cor:random_walk} also appear in many other random graph models of nonamenable flavour  \cite{MR1634413,MR3536537}.
 %
%
 For results on isoperimetry and random walks in supercritical percolation  on $\Z^d$, see e.g.\ \cite{Pete08,MR2094438} and references therein. 







\paragraph{About the proof and organization.} The proof of the main theorem, \cref{thm:main}, is given in \cref{sec:mainproof}. The starting point of the proof is to use Russo's formula to express the $p$-derivative of the truncated exponential moment $\bE_{p}[e^{t|E(K_v)|}\mathbbm{1}(|E(K_v)|<\infty)]$ as the sum of a positive term, which corresponds to the cluster growing while remaining finite, and a negative term, which corresponds to the finite cluster becoming infinite. (To do this rigorously we instead truncate at a large finite volume.) The proof then hinges on two key ideas, each of which allows us to bound the absolute value of one of these two terms.  The bounds on the \emph{negative} term work by re-purposing ideas from the Burton-Keane proof that there is at most one infinite cluster in percolation on \emph{amenable} transitive graphs \cite{burton1989density}. We use here the fact that supercritical percolation on a nonamenable transitive graph always stochastically dominates an invariant percolation process with trifurcation points (\cref{lem:furcations}), which is a sort of weak converse to Burton-Keane and is due in the unimodular case to Benjamini, Lyons, and Schramm \cite{BLS99}. 
Meanwhile, for the \emph{positive} term, we first bound the derivative in terms of certain `skinny trees of bridges', and then bound the resulting expression using an inductive argument. This is the most technical part of the paper.
Finally, the derivative itself, which is the difference of these two terms, can be bounded using martingale methods. Once these three bounds are in hand, the finiteness of $\bE_{p}[e^{t|E(K_v)|}\mathbbm{1}(|E(K_v)|<\infty)]$ follows easily.

We then apply \cref{thm:main} to deduce \cref{cor:dichotomy,cor:analyticity,cor:anchored_expansion,cor:random_walk} in \cref{sec:corollaries}, generalize \cref{thm:main} to the quasi-transitive case in \cref{sec:QT}, and give closing remarks and open problems in \cref{sec:closing}.

\begin{remark}
An interesting feature of our proof is that, although we rely on an equality \eqref{eq:TwoDerivatives} obtained by expressing the derivative of $\bE_{p}[e^{t|E(K_v)|}\mathbbm{1}(|E(K_v)|\leq n)]$ in two different ways, we \emph{do not} use the fact that this equality is related to the derivative.
In particular, we do not integrate any differential inequality nor use any sprinkling,
 in contrast with the classical results mentioned above. Speculatively, this may mean that some of the techniques we develop here have applications to \emph{critical} percolation on amenable transitive graphs under the (presumably contradictory) assumption that there is an infinite cluster at $p_c$.
\end{remark}

\section{Proof of the main theorem}
\label{sec:mainproof}

In this section we prove \cref{thm:main}.
%
%
Let $G=(V,E)$ be a connected, locally finite graph, and let $v$ be a vertex of $G$. For each $\omega \in \{0,1\}^E$ we write $K_v(\omega)$ for the cluster of $v$ in $\omega$, and write $E_v(\omega) = |E(K_v(\omega))|$ for the number of edges touching $K_v(\omega)$. We will usually take $\omega$ be an instance of Bernoulli-$p$ bond percolation on $G$, in which case we write $K_v=K_v(\omega)$ and $E_v=E_v(\omega)$ when there is no risk of confusion. Recall that $\sH_v$ denotes the set of all finite connected subgraphs of $G$ containing $v$. Given a function $F: \sH_v \to \R$, we write
\[
\bE_{p,n}[F(K_v)] := \bE_p\left[ F(K_v)\mathbbm{1}(E_v \leq n)\right] \qquad \text{ and } \qquad \bE_{p,\infty}[F(K_v)] := \bE_p\left[ F(K_v)\mathbbm{1}(E_v < \infty)\right] 
\]
for every $p\in [0,1]$ and $n \geq 1$. To prove \cref{thm:main}, it suffices to prove that if $G$ is transitive and nonamenable then for every $p_c(G)<p<1$ there exists $t>0$ such that $\bE_{p,\infty}[E_v e^{t E_v}] < \infty$.


Given $F: \mathscr{H}_v \to \R$ and $n\geq 1$, the truncated expectation $\bE_{p,n}[F(K_v)]$ is a polynomial in $p$ and is therefore differentiable. 
We begin our analysis by expressing the $p$-derivative of $\bE_{p,n}[F(K_v)]$ in two different ways. 
 The first way to express the derivative is in terms of the \emph{fluctuation} in the number of open edges in the cluster of $v$, which we now introduce. For each finite subgraph $H$ of $G$, we define $E(H)$ to be the set of edges of $G$ with at least one endpoint in the vertex set of $H$, define $E_o(H)$ to be the edges of $H$, and define $\partial H$ to be $E(H) \setminus E_o(H)$. 
 (Note that $\partial H$ is not always the same thing as the edge boundary $\partial_E V(H)$ of the vertex set $V(H)$ of $H$, and it is possible for edges of $\partial H$ to have both endpoints in the vertex set of $H$.)
  For each $p\in [0,1]$ we also define the \textbf{fluctuation}
\begin{equation}
h_p(H)=p|\partial H|-(1-p)|E_o(H)|.
\end{equation}
 Then for every $n  \geq 1$, $p\in (0,1)$, and $F: \mathscr{H}_v \to \R$ we may compute that 
\begin{align}
\frac{d}{dp} \bE_{p,n} \left[ F(K_v) \right] &= 
\sum_{H \in \sH_v} \frac{d}{dp} p^{|E_o(H)|}(1-p)^{|\partial H|} F(H) \mathbbm{1}(|E(H)|\leq n)
\nonumber
\\
&=
-\frac{1}{p(1-p)}\sum_{H \in \sH_v} h_p(H) p^{|E_o(H)|}(1-p)^{|\partial H|} F(H) \mathbbm{1}(|E(H)|\leq n)
\nonumber
\\
&=
-\frac{1}{p(1-p)}\bE_{p,n} \left[ h_p(K_v) F(K_v)\right] =: -\mathbf{M}_{p,n}[F(K_v)].
\label{eq:MartingaleDerivative}
\end{align}
In many situations, it is fruitful to bound the absolute value of this expression  by viewing $h_p(K_v)$ as the final value of a certain martingale (indeed, a stopped random walk) that arises when exploring the cluster of $v$ one step at a time. We will apply this strategy to bound the absolute value of the derivative $\frac{d}{dp}\bE_{p,n} \left[ e^{tE_v} \right]$ in \cref{subsec:total_derivative}.

\medskip

Next, we apply \emph{Russo's formula} to give an alternative expression for the derivative in terms of \emph{pivotal edges}. For each $\omega \in \{0,1\}^E$ and $e \in E$, let $\omega^e = \omega \cup \{e\}$ and let $\omega_e = \omega \setminus \{e\}$. (Here and elsewhere we abuse notation to identify elements of $\{0,1\}^E$ with subsets of $E$ whenever it is convenient to do so. This should not cause any confusion.) Russo's formula \cite[Theorem 2.32]{grimmett2010percolation} states that if $X:\{0,1\}^E\to\R$ depends on at most finitely many edges, then
\[
\frac{d}{dp}\bE_p \left[X(\omega)\right] = \sum_{e\in E} \bE_p\left[ X(\omega^e)-X(\omega_e) \right].
\]
Applying this formula to $X$ of the form $X(\omega)=F(K_v)\mathbbm{1}(E_v\leq n)$ for $F:\sH_v \to \R$, we deduce that
\begin{multline*}
\frac{d}{dp}\bE_{p,n} \left[F(K_v)\right] = 
\sum_{e\in E} \bE_p\left[ \left(F\left[K_v(\omega^e)\right]-F\left[K_v(\omega_e)\right]\right) \mathbbm{1}\left( E_v(\omega^e) \leq n \right) \right] \\
-\sum_{e\in E} \bE_p\left[ F\left[K_v(\omega_e)\right] \mathbbm{1}\bigl(E_v(\omega_e) \leq n < E_v(\omega^e)\bigr) \right].
\end{multline*}
We denote the two terms appearing on the right hand side of this expression by
\begin{align*}
\mathbf{U}_{p,n}\left[F(K_v)\right] &:= \sum_{e\in E} \bE_p\left[ \left(F\left[K_v(\omega^e)\right]-F\left[K_v(\omega_e)\right]\right) \mathbbm{1}\left( E_v(\omega^e) \leq n \right) \right]\\
& \hspace{3.5cm} = \frac{1}{p} \sum_{e\in E} \bE_{p,n}\left[ \left(F\left[K_v\right]-F\left[K_v(\omega_e)\right]\right) \mathbbm{1}\bigl(\omega(e)=1\bigr) \right]
\intertext{and}
\mathbf{D}_{p,n}\left[F(K_v)\right] &:= \sum_{e\in E} \bE_p\left[ F\left[K_v(\omega_e)\right] \mathbbm{1}\bigl(E_v(\omega_e) \leq n < E_v(\omega^e)\bigr) \right]\\
& \hspace{3.5cm} = \frac{1}{1-p}\sum_{e\in E} \bE_p\left[ F(K_v) \mathbbm{1}\bigl(\omega(e)=0,\, E_v \leq n < E_v(\omega^e)\bigr) \right],
\end{align*}
so that
\begin{equation}
\label{eq:TwoDerivatives}
-\mathbf{M}_{p,n}[F(K_v)] = 
\frac{d}{dp}\bE_{p,n} \left[F(K_v)\right] = \mathbf{U}_{p,n}[F(K_v)]-\mathbf{D}_{p,n}[F(K_v)]
\end{equation}
for every $F:\sH_v \to \R$, every $p\in (0,1)$, and every $n\geq 1$. Note that $\bM_{p,n}[F(K_v)]$, $\bU_{p,n}[F(K_v)]$, and $\bD_{p,n}[F(K_v)]$ all depend linearly on $F:\sH_v \to \R$ for fixed $p \in (0,1)$ and $n\geq 1$, that $\bD_{p,n}[F(K_v)]$ is nonnegative if $F$ is nonnegative and that $\bU_{p,n}[F(K_v)]$ is nonnegative if $F$ is increasing.

\medskip

Our strategy will be to use the equality \eqref{eq:TwoDerivatives} to obtain bounds on moments of certain random variables. 
In the remainder of this section we carry this out conditional on three supporting propositions, each of which gives control of one of the three quantities $\bM_{p,n}[e^{t E_v}]$, $\bU_{p,n}[e^{t E_v}]$, or $\bD_{p,n}[e^{t E_v}]$, and which  will be proved in the following three subsections. 
We begin by stating the following proposition, which is proven in \cref{subsec:negative_term}. 




\begin{prop}
\label{prop:NegativeTerm}
Let $G$ be a connected, locally finite, nonamenable, transitive  graph. Then for every $p_0>p_c(G)$ there exists a constant $c_{p_0}=c_{p_0}(G,p)$ such that 
\[
\mathbf{D}_{p,n}\left[F(K_v)\right] \geq \frac{c_{p_0}}{1-p} \bE_{p,n}\left[ E_v \cdot F(K_v)\right]
\]
for every nonnegative function $F: \mathscr{H}_v \to [0,\infty)$, every $p_0 \leq p < 1$, and every $n\geq 1$.
\end{prop}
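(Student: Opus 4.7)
The plan is to reduce the proposition to a per-cluster pointwise inequality and then combine nonamenability of $G$ with the invariant trifurcation structure supplied by \cref{lem:furcations}.

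Since the asserted inequality is linear in $F \geq 0$, it suffices to establish, for every finite $H \in \sH_v$ with $|E(H)| \leq n$, the pointwise bound
$$\bE_p\bigl[N(\omega) \,\bigm|\, K_v(\omega) = H\bigr] \geq c_{p_0}\,|E(H)|, \qquad N(\omega) := \#\{e \in E : \omega(e) = 0,\ E_v(\omega^e) > n\};$$
multiplying by $F(H)\bP_p[K_v = H]$, summing over such $H$, and dividing by $1-p$ then recovers the proposition. I will lower-bound $N$ by $N'$, the number of closed edges $e = \{u,w\} \in \partial_E V(H)$ whose exterior endpoint $w$ lies in an infinite cluster of $\omega$ restricted to $E \setminus E(H)$: opening any such edge merges $V(H)$ with an infinite cluster, so $E_v(\omega^e) = \infty > n$; meanwhile edges in $\partial H$ with both endpoints in $V(H)$ do not contribute to $N$ at all, since opening them leaves $V(K_v)$, and thus $E_v$, unchanged. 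Conditionally on $\{K_v = H\}$, the configuration on $E \setminus E(H)$ is unconditioned Bernoulli-$p$ percolation on $G \setminus V(H)$.

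Two ingredients then combine. First, nonamenability gives $|\partial_E V(H)| \geq \Phi_E(G)\cdot d|V(H)|$ where $d$ is the common degree, and solving the identity $2|E(H)| = d|V(H)| + |\partial_E V(H)|$ together with this inequality produces $|\partial_E V(H)| \geq c'|E(H)|$ with $c' = 2\Phi_E(G)/(1+\Phi_E(G)) > 0$. Second, \cref{lem:furcations} provides an invariantly coupled bond percolation $\xi \leq \omega$ with a positive density $p_T > 0$ of trifurcation points. Because every $\xi$-trifurcation lies in an infinite $\xi$-cluster, hence in an infinite $\omega$-cluster, which is disjoint from $V(H)$, the three disjoint infinite $\xi$-arms at a $\xi$-trifurcation $x \in \partial V(H)$ automatically lie in $V \setminus V(H)$; in particular $x$ belongs to an infinite cluster of $\omega$ restricted to $E \setminus E(H)$, and every edge from $V(H)$ to $x$ contributes to $N'$. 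Thus $N'$ is at least the sum over $x \in \partial V(H)$ of (the number of edges from $V(H)$ to $x$) times the indicator that $x$ is a $\xi$-trifurcation.

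To finish the proof, I need a uniform lower bound on the conditional expectation of this weighted count of $\xi$-trifurcations in $\partial V(H)$. I plan to establish such a bound via the mass transport principle applied to the invariant joint distribution of $(\omega, \xi)$: design an $\Aut(G)$-equivariant allocation rule that transports a unit of mass from each vertex of each finite $\omega$-cluster to the $\xi$-trifurcations on its outer boundary, then equate the inbound and outbound expected masses to translate the unconditional density $p_T$ of trifurcations into a conditional density. The main obstacle is verifying that conditioning on $\{K_v(\omega) = H\}$ does not deplete the $\xi$-trifurcations adjacent to $V(H)$ by more than a constant factor uniformly in $H$; this relies on the independence of the percolation on $E \setminus E(H)$ from the configuration on $E(H)$, together with careful use of the joint invariance of $(\omega, \xi)$. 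Combining this conditional density estimate with the nonamenability bound on $|\partial_E V(H)|$ then yields $\bE_p[N' \mid K_v = H] \geq c_{p_0}|E(H)|$ with $c_{p_0} > 0$ depending only on $p_0$ and $G$, as required.
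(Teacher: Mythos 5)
Your reduction to the conditional bound $\bE_p[N \mid K_v=H] \geq c_{p_0}|E(H)|$ and your lower bound $|\partial_E V(H)| \geq \frac{2\Phi_E(G)}{1+\Phi_E(G)}|E(H)|$ are both fine, but the heart of the argument — the claim that, conditionally on $\{K_v(\omega)=H\}$, a uniformly positive density of the boundary edges of $H$ lead to $\xi$-trifurcations (or to infinity) — is exactly the step you leave open, and as formulated it does not go through. First, \cref{lem:furcations} only gives a process $\xi$ \emph{stochastically dominated} by $\omega$; no coupling with a spatial Markov or independence property is provided, so the fact that $\omega$ restricted to $E\setminus E(H)$ is independent of $\omega$ restricted to $E(H)$ tells you nothing about the conditional law of $\xi$ given $\{K_v(\omega)=H\}$: under an adversarial (even jointly invariant) monotone coupling, conditioning on this event can deplete $\xi$-trifurcations near $\partial V(H)$ arbitrarily. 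Second, the mass-transport principle and invariance only control \emph{unconditional} expectations; $\{K_v(\omega)=H\}$ is not an invariant event, so there is no general mechanism to "translate the unconditional density $p_T$ into a conditional density" uniformly in $H$. Third, even the weaker variant in which you drop $\xi$ and ask directly that each exterior boundary vertex be connected to infinity in $\omega$ restricted to $E\setminus E(H)$ (where the independence you invoke \emph{is} valid) fails to admit a uniform per-vertex lower bound: deleting $E(H)$ can cripple or even isolate a boundary vertex, so a positive proportion of boundary edges need not lead to infinity, and your weighted count $N'$ has no uniform lower bound vertex by vertex.

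The paper's proof avoids demanding anything at the boundary, and this is the idea your proposal is missing. One resamples: take independent copies $\omega_1,\omega_2$, set $\omega=\omega_1$ on $E(K_v(\omega_1))$ and $\omega=\omega_2$ elsewhere, so that conditionally on the cluster the outside configuration is a \emph{fresh, unconditioned} Bernoulli-$p$ copy $\omega_2$ on all of $G$. One then applies an annealed Burton--Keane-type estimate (\cref{prop:BurtonKeane}): for a \emph{deterministic} finite set $S$, $\bE_p[\mathscr{P}_{\omega_2}(S\to\infty)] \geq c_p |E(S)|$, which follows because the furcations of $\eta\leq\omega_2$ have positive density over \emph{all} vertices of $S$ (by transitivity and invariance alone — no conditioning, no boundary density), and \cref{lem:Euler}/Menger converts interior furcations into edge-disjoint escape paths. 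Finally, a last-exit decomposition shows each edge-disjoint $\omega_2$-path from $K_v$ to infinity produces a distinct closed edge $e$ with $E_v(\omega^e)=\infty$, giving the required conditional bound on $N$. So the structure you want (trifurcations forcing many pivotal closed edges) is correct in spirit, but it must be harvested from interior furcations of an independent copy rather than from boundary trifurcations of $\xi$ under conditioning; as written, your key conditional density estimate is an unproven and, in this generality, unprovable step.
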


We remark that the complementary inequality $\mathbf{D}_{p,n}\left[F(K_v)\right] \leq \frac{1}{1-p} \bE_{p,n}\left[ E_v \cdot F(K_v)\right]$ always holds trivially on every connected, locally finite graph.

Applying \cref{prop:NegativeTerm} to the function $F(K_v) = e^{t E_v}$, we obtain that for every $p_c<p<1$ there exists a positive constant $c_p$ such that
\begin{equation}
\label{eq:startingpoint}
\frac{c_{p}}{1-p} \bE_{p,n}\left[ E_v e^{t E_v}\right] \leq \bD_{p,n}\left[ e^{t E_v}\right] = \bU_{p,n}\left[ e^{t E_v}\right] + \bM_{p,n}\left[ e^{t E_v}\right]
\end{equation}
for every $n\geq 1$, and $t\geq 0$. 
We want to show that if $t>0$ is sufficiently small then the expectation on the left is bounded uniformly in $n$.
 To do this, we bound each term on the right hand side by the sum of a term that can be absorbed into the left hand side and a term that we will show is bounded as $n\to \infty$ for sufficiently small values of $t>0$. For the second term, this is quite straightforward to carry out using the bound
 \begin{multline}
\bM_{p,n}\left[ e^{t E_v}\right] 
 \leq \frac{c_p}{4(1-p)} \bE_{p,n}\left[ E_v e^{t E_v}\right] \\+ \frac{1}{p(1-p)} \bE_{p,n}\left[|h_p(K_v)| e^{t E_v} \mathbbm{1}\left(|h_p(K_v)| \geq \frac{pc_p}{4}E_v\right)\right]. \label{eq:M_twoterms}
 \end{multline}
 The term on the second line can readily be shown to be bounded as $n\to\infty$ for sufficiently small $t$ via a martingale analysis, as summarized by the following proposition which is proven in \cref{subsec:total_derivative}.

\begin{prop}
\label{prop:big_fluctuation}
Let $\alpha>0$. Then
\[
\bE^G_p\left[ |h_p(K_v)| e^{t E_v } \mathbbm{1}\left(\alpha E_v  \leq |h_p(K_v)| <\infty \right)\right] < \infty
\]
for every locally finite graph $G=(V,E)$, every $v\in V$, every $p\in [0,1]$, and every $0 \leq t < \alpha^2/2$.
\end{prop}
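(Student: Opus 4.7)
The plan is to interpret $h_p(K_v)$ as (minus) the terminal value of a centered random walk driven by an edge-by-edge exploration of the cluster of $v$, and then to apply Hoeffding's inequality together with the trivial pointwise bound $|h_p(K_v)| \le E_v$.

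First I would fix an arbitrary enumeration of the edges of $G$ and run the standard edge-by-edge exploration of $K_v$: initialize a queue with the edges incident to $v$; at step $i$, pop the smallest-indexed edge currently in the queue and sample an independent $X_i \sim \mathrm{Ber}(p)$ to decide whether it is open, updating the queue accordingly. Let $T$ denote the first time the queue is empty. On the event $\{|K_v| < \infty\}$ the edges processed by time $T$ are exactly $E(K_v)$, so $T = E_v$ and $\sum_{i=1}^T X_i = |E_o(K_v)|$. Extending $(X_i)_{i \ge 1}$ by i.i.d.\ $\mathrm{Ber}(p)$ variables after time $T$ and setting $M_n := \sum_{i=1}^n (X_i - p)$ yields a centered random walk with increments in $[-p, 1-p] \subseteq [-1,1]$ satisfying
\[
M_T \;=\; |E_o(K_v)| - p\, E_v \;=\; -\bigl(p|\partial K_v| - (1-p)|E_o(K_v)|\bigr) \;=\; -h_p(K_v)
\]
on $\{T < \infty\}$, which contains $\{|h_p(K_v)| < \infty\}$.

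Next, Hoeffding's inequality for sums of independent bounded variables yields $\bP(|M_n| \ge \alpha n) \le 2 e^{-2\alpha^2 n}$ for every $n \ge 1$. Combining this with the pointwise bound $|h_p(K_v)| \le \max(p, 1-p)\, E_v \le E_v$ and decomposing according to the value of $E_v = n$, I would bound
\[
\bE_p\!\left[ |h_p(K_v)|\, e^{t E_v}\, \mathbbm{1}\bigl(\alpha E_v \le |h_p(K_v)| < \infty\bigr)\right]
\;\le\; \sum_{n \ge 1} n\, e^{tn}\, \bP\bigl(|M_n| \ge \alpha n\bigr)
\;\le\; 2\sum_{n \ge 1} n\, e^{(t - 2\alpha^2) n},
\]
which is finite whenever $t < 2\alpha^2$, and in particular whenever $t < \alpha^2/2$ as claimed.

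There is no real obstacle here beyond the bookkeeping needed to ensure that the increments $(X_i)$ genuinely form an i.i.d.\ $\mathrm{Ber}(p)$ sequence; this is automatic once the exploration order is specified as a deterministic function of the revealed history. Notably the argument uses neither transitivity nor nonamenability of $G$, in agreement with the proposition being stated for arbitrary locally finite graphs.
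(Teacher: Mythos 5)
Your proposal is correct and follows essentially the same route as the paper: couple the cluster exploration with an i.i.d.\ sequence so that $-h_p(K_v)$ is the stopped value of a centered bounded-increment walk, bound the truncated expectation by $\sum_n n e^{tn}\bP(|M_n|\ge \alpha n)$ using $|h_p(K_v)|\le E_v$, and conclude by exponential concentration. The only (harmless) difference is that you invoke Hoeffding's inequality, giving the tail $2e^{-2\alpha^2 n}$ and hence finiteness for all $t<2\alpha^2$, whereas the paper uses Azuma's inequality with the slightly weaker exponent $\alpha^2/2$, which already suffices for the stated range of $t$.
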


Bounding $\bU_{p,n}[e^{t E_v}]$ is rather more complicated. Let $H$ be a finite connected graph. For each edge $e$ of $H$, let $H_e$ denote the subgraph of $H$ spanned by all edges of $H$ other than $e$. Given a vertex $v$ and a set $W$ of vertices in $H$, we write $\operatorname{Piv}(H,v,W)$ for the set of edges $e$ such that there exists $w \in W$ such that $v$ is not connected to $w$ in $H_e$, and define
\[
\Bridges_k(H,v) = \max\Bigl \{|\operatorname{Piv}(H,v,W)| : |W| \leq k \Bigr\}.
\]
We will use these quantities to bound $\bU_{p,n}[e^{tE_v}]$. To do this, we first write
\begin{multline*}
\mathbf{U}_{p,n}\left[E_v^k\right] = \frac{1}{p} \sum_{e\in E} \bE_{p,n}\left[ \left(E_v^k-E_v(\omega_e)^k\right) \mathbbm{1}\bigl(\omega(e)=1\bigr) \right]\\
=
\frac{1}{p} 
\sum_{a_1,\ldots,a_k\in E} \sum_{e\in E} \bP_p\biggl( \{a_1,\ldots,a_k\} \subseteq E(K_v),\, \{a_1,\ldots,a_k\} \not\subseteq E(K_v(\omega_e)),
\text{ $\omega(e)=1,$ and $E_v \leq n$}  \biggr)
\end{multline*}
for each $k\geq 1$.
Let $A \subseteq E(K_v)$ have $|A| \leq k$ and for each edge $e \in A$  let $w(e)$ be an endpoint of $e$ in $K_v$, and let $W=W(A) = \{w(e) : e \in A\}$. Then we have that
\[
 \sum_{e\in E} \mathbbm{1}\biggl( A \not\subseteq E(K_v(\omega_e))
\text{ and }\omega(e)=1  \biggr) \leq |\operatorname{Piv}(K_v,v,W)| \leq \Bridges_k(K_v,v).
\]
Since this inequality holds for every set $A \subseteq E$ with $|A|\leq k$, we deduce that
\begin{align}
\mathbf{U}_{p,n}\left[E_v^k\right] &\leq \frac{1}{p}
  \sum_{a_1,\ldots,a_k\in E} \bE_{p,n}\biggl[ \Bridges_k(K_v,v) \mathbbm{1}\left(\{a_1,\ldots,a_k\} \subseteq E(K_v)\right)  \biggr]
 = \frac{1}{p}\bE_{p,n}\left[ \Bridges_k(K_v,v) E_v^k \right]
 \label{eq:U_to_bridges}
\end{align}
for every $p\in (0,1)$ and $k\geq 1$.
%
%
Summing over $k$, we obtain that
\begin{equation}
\mathbf{U}_{p,n}\left[e^{tE_v}\right]= \sum_{k = 0}^\infty \frac{t^k}{k!} \mathbf{U}_{p,n}\left[E_v^k \right]
 \leq \frac{1}{p}\sum_{k = 1}^\infty \frac{t^k}{k!}\bE_{p,n}\left[ \Bridges_k(K_v,v) E_v^k \right]
\end{equation}
for every $t>0$. Similarly to \eqref{eq:M_twoterms}, we can bound this expression by
\begin{multline}
\bU_{p,n}\left[ e^{t E_v}\right] \leq \frac{c_p}{4(1-p)} \bE_{p,n}\left[ E_v e^{t E_v}\right] \\+ \frac{1}{p} \sum_{k = 1}^\infty \frac{t^k}{k!}\bE_{p,n}\left[ \Bridges_k(K_v,v) E_v^k \mathbbm{1}\left(\Bridges_k(K_v,v) \geq \frac{pc_p}{4(1-p)} E_v \right)\right]. \label{eq:U_twoterms}
\end{multline}
The second term is dealt with by the following proposition, which is proven in \cref{subsec:positive_term}.


\begin{prop}
\label{prop:Skinny_Br}
Let $\alpha>0$ and $p\in (0,1)$. Then there exists $t_{\alpha,p} >0$ such that
\[
\sum_{k=1}^\infty \frac{t^k}{k! }\bE^G_p\left[ \Bridges_k(K_v,v) E_v^k \mathbbm{1}\left(\alpha E_v \leq \Bridges_k(K_v,v) <\infty \right)\right] < \infty
\]
for every locally finite graph $G=(V,E)$, every $v\in V$ and every $0 \leq t < t_{\alpha,p}$.
\end{prop}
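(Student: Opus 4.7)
First I would reduce the whole series to a single expectation by a monotonicity argument. Since $\Bridges_k(K_v,v)$ is non-decreasing in $k$ and bounded above by the total number of bridges $B$ of the cluster $K_v$, the skinniness indicator $\mathbbm{1}(\alpha E_v \le \Bridges_k)$ is also non-decreasing in $k$, while the condition $\Bridges_k<\infty$ together with $\alpha E_v \le \Bridges_k$ forces $E_v<\infty$. Summing in $k$ then gives the pointwise bound
\[
\sum_{k \ge 1} \frac{t^k}{k!}\, \Bridges_k(K_v,v)\, E_v^k\, \mathbbm{1}\bigl(\alpha E_v \le \Bridges_k(K_v,v) < \infty\bigr) \;\le\; E_v\, e^{t E_v}\, \mathbbm{1}\bigl(\alpha E_v \le B,\; E_v < \infty\bigr).
\]
So it suffices to show that $\bE_p\bigl[E_v e^{tE_v}\mathbbm{1}(\alpha E_v \le B, E_v < \infty)\bigr]<\infty$ for all sufficiently small $t>0$. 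Informally, this reduces the task to showing that ``tree-like'' (high-bridge-fraction) finite clusters contribute only finitely to the exponential moment of cluster size.

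Next I would enumerate such clusters via their \emph{bridge-tree decomposition}. Every finite cluster $K \ni v$ decomposes canonically as a tree $T(K)$ whose edges are the bridges of $K$ and whose nodes are the $2$-edge-connected blocks of $K$. On the skinniness event, $T(K)$ has at least $\alpha E(K)$ edges and the total edge count in the blocks is at most $(1-\alpha) E(K)$. Using the standard identity $\bP_p(K_v = K) = p^{E(K)}(1-p)^{|\partial K|}$, the target expectation then splits as a sum over pairs consisting of an embedded bridge tree $T$ in $G$ rooted at $v$ together with a compatible assignment of $2$-edge-connected blocks to its nodes.

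I would then organize this sum by an inductive argument along the bridge-tree structure. Because cutting any bridge splits the cluster into two conditionally independent pieces (with respect to the law of the sub-clusters attached to each side), one can define a generating function encoding the skinny contribution rooted at each vertex and derive a functional equation expressing it in terms of the same quantities at neighbors. The essential input is the $(1-p)^{|\partial K|}$ factor, which contributes a penalty $(1-p)^{d-2}$ from the ``free'' incident edges at each bridge-tree vertex of degree $d$, thereby counterbalancing the combinatorial growth of possible embeddings of the bridge tree in $G$. Solving this recursion, and using the skinniness constraint to bound the block contributions, should yield convergence of the generating function (and hence of our series) for all $t$ smaller than some explicit $t_{\alpha,p}>0$ depending only on $\alpha$ and $p$.

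\paragraph{Main obstacle.} The hard part will be making the argument uniform across all locally finite graphs, and in particular handling graphs of unbounded degree where naive Peierls-type enumeration of subtrees diverges. The $(1-p)^{|\partial K|}$ factor must therefore be exploited carefully and locally, so that it survives the presence of high-degree vertices on the bridge tree. Moreover, the skinniness constraint $B \ge \alpha E_v$ is a \emph{global} feature of the cluster that does not obviously factorize along the bridge tree, so the inductive scheme has to track a ``block-edge deficit'' in tandem with the generating function itself.
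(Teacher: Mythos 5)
Your opening reduction is where the argument breaks, and the break is fatal rather than technical. The pointwise inequality you state is true ($\Bridges_k(K_v,v)\leq B\leq E_v$, where $B$ is the total number of bridges of $K_v$), but it replaces the event $\{\alpha E_v\leq \Bridges_k(K_v,v)\}$ by the much larger event $\{\alpha E_v\leq B\}$, and the resulting target statement, namely that $\bE_p\bigl[E_v e^{tE_v}\mathbbm{1}(\alpha E_v\leq B,\,E_v<\infty)\bigr]<\infty$ for some $t>0$ uniformly over all locally finite graphs and all $p\in(0,1)$, is simply false. Take $G$ the rooted binary tree and $p=1/2$: the cluster of the root is a critical Galton--Watson tree, so it is finite almost surely, every open edge is a bridge, hence $B=|K_v|-1\geq \frac{1}{4}E_v$ once $|K_v|\geq 4$, and $\bP_{1/2}(|K_v|=n)\asymp n^{-3/2}$ decays only polynomially. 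Thus for $\alpha\leq 1/4$ the expectation above is infinite for every $t>0$. The proposition itself is nevertheless true on this graph precisely because of the $k$-dependence you discarded: for fixed $k$ one has $\Bridges_k(K_v,v)=\max_{\ell\leq k}\Leaf_\ell(K_v,v)\leq k R_v$ (cf.\ \cref{e:BL}), so the event $\{\Bridges_k\geq \alpha E_v\}$ forces the intrinsic radius to be a positive fraction (of order $\alpha/k$) of the volume, i.e.\ a path-like cluster, and it is \emph{this} that is exponentially unlikely uniformly over all graphs (\cref{lem:SkinnyRadius}); a cluster can have a positive density of bridges spread over many short dangling ends without being unlikely at all. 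Any correct proof must therefore keep track of $k$ and beat the $k$-dependent degradation of the rate using the $t^k/k!$ weights, which is exactly what the paper's induction on $k$ does.

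On the rest of the plan: the block-tree decomposition and generating-function flavour are indeed close in spirit to the paper's proof, but the paper's induction is over the number of leaves $k$ (tracking the joint law of $(E_v,\Leaf_k)$ via the quantity $Q_k(p,n,m)$), not over the bridge-tree of a single cluster with its total bridge count. Your anticipated obstacle -- enumerating embedded bridge trees in graphs of unbounded degree -- is real, and the paper avoids it entirely: instead of counting embeddings it defines $Q_k$ as a supremum over \emph{all} countable locally finite graphs (to kill non-monotonicity in the induction) and performs a probabilistic surgery at a last-branching bridge, re-randomizing from $p$ to $p'=pe^{-1/n}$ so that exactly one last-branching bridge closes with conditional probability at least $(k+1)/(2en)$; this yields the recursion of \cref{lem:recursion}, whose base case is the skinny-radius estimate. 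If you want to salvage your approach, you would need to re-introduce the constraint $\Leaf_\ell(K_v,v)\geq\alpha E_v$ (equivalently, a radius-versus-volume constraint with a $k$-dependent constant) into your functional equation, at which point you are essentially reconstructing the paper's argument.
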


Let us now see how the proof of \cref{thm:main} can be concluded given \cref{prop:NegativeTerm,prop:big_fluctuation,prop:Skinny_Br}. 

\begin{proof}[Proof of \cref{thm:main}]
Let $G$ be a connected, locally finite, transitive, nonamenable graph, and let $p>p_c(G)$. Letting $c_p$ be the constant from \cref{prop:NegativeTerm}, we deduce from \eqref{eq:startingpoint}, \eqref{eq:M_twoterms}, and \eqref{eq:U_twoterms} that
\begin{multline*}
\frac{c_p}{2(1-p)} \bE_{p,n}\left[ E_v e^{tE_v}\right] \leq \frac{1}{p} \sum_{k = 1}^\infty \frac{t^k}{k!}\bE_{p,n}\left[ \Bridges_k(K_v,v) E_v^k \mathbbm{1}\left(\Bridges_k(K_v,v) \geq \frac{pc_p}{4(1-p)} E_v \right)\right]\\
+\frac{1}{p(1-p)} \bE_{p,n}\left[|h_p(K_v)| e^{t E_v} \mathbbm{1}\left(|h_p(K_v)| \geq \frac{pc_p}{4}E_v\right)\right]
\end{multline*}
for every $n\geq 1$ and $t\geq 0$. Taking limits as $n\uparrow \infty$ we obtain that
\begin{multline*}
\frac{c_p}{2(1-p)} \bE_{p,\infty}\left[ E_v e^{tE_v}\right] \leq \frac{1}{p} \sum_{k =1}^\infty \frac{t^k}{k!}\bE_{p,\infty}\left[ \Bridges_k(K_v,v) E_v^k \mathbbm{1}\left(\Bridges_k(K_v,v) \geq \frac{pc_p}{4(1-p)} E_v \right)\right]\\
+\frac{1}{p(1-p)} \bE_{p,\infty}\left[|h_p(K_v)| e^{t E_v} \mathbbm{1}\left(|h_p(K_v)| \geq \frac{pc_p}{4}E_v\right)\right]
\end{multline*}
for every $t\geq 0$. \cref{prop:big_fluctuation,prop:Skinny_Br} imply that there exists $t_0=t_0(p,c_p)>0$ such that the right hand side is finite for every $0\leq t < t_0$, completing the proof. 
\end{proof}

It now remains to prove \cref{prop:NegativeTerm,prop:big_fluctuation,prop:Skinny_Br}. 

\subsection{Bounding the negative term}
\label{subsec:negative_term}

In this section we prove \cref{prop:NegativeTerm}. 
Note that the proof of this proposition is the only place where the assumption of nonamenability and transitivity are used in the proof of \cref{thm:main}. The proof is also ineffective in the sense that it does not yield any explicit lower bound on the constant $c_{p_0}$, and is in fact the only ineffective step in the proof of \cref{thm:main}.

Let $G=(V,E)$ be a locally finite graph, let $\omega \in \{0,1\}^E$, and let $S$ be a finite subset of $V$. We define the quantity
\[
\mathscr{P}_\omega(S \to \infty) = \min \Bigl\{ |C| : C \subseteq E,\, S \text{ not connected to $\infty$ in $\omega \setminus C$} \Bigr\}.
\]
By Menger's Theorem \cite{MR335355}, this quantity is equal to the maximum size of a set of edge-disjoint paths from $S$ to $\infty$ in the subgraph of $G$ spanned by $\omega$. Note in particular that $\mathscr{P}_\omega(S \to \infty)$ is an increasing function of $\omega \in \{0,1\}^E$. \cref{prop:NegativeTerm} will be deduced from the following proposition. 

\begin{prop}
\label{prop:BurtonKeane}
Let $G$ be a connected, locally finite, nonamenable, transitive  graph. Then for every $p>p_c(G)$ there exists a positive constant $c_{p}$ such that
\begin{equation}
\label{eq:BurtonKeane}
\bE_p\left[\mathscr{P}_\omega(S \to \infty)\right] \geq c_{p}|E(S)|
\end{equation}
for every $S \subset V$ finite.
\end{prop}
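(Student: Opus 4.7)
Plan. By \cref{lem:furcations}, applied to the nonamenable transitive graph $G$ at parameter $p > p_c(G)$, one obtains an $\Aut(G)$-invariant coupling of $\omega \sim \bP_p$ with a random subgraph $\omega' \subseteq \omega$ for which there exists a constant $\delta_p > 0$ such that, for every $v \in V$,
\[
\bP\!\left( v \text{ is a trifurcation of its } \omega'\text{-cluster} \right) \geq \delta_p,
\]
where $v$ being a trifurcation means that its $\omega'$-cluster is infinite and the removal of $v$ from that cluster leaves at least three infinite components. Write $T = T(\omega')$ for the random set of such trifurcations.

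By Menger's theorem, $\mathscr{P}_\omega(S \to \infty)$ equals the minimum size of an edge cut separating $S$ from infinity in $\omega$. Fix any such cut $C$ and let $R_C \supseteq S$ be the (necessarily finite) set of vertices reachable from $S$ in $\omega \setminus C$; by construction every $\omega$-edge from $R_C$ to its complement lies in $C$, and in particular $|C| \geq |\partial^{\omega'}_E R_C|$, where $\partial^{\omega'}_E R_C$ denotes the set of $\omega'$-edges with exactly one endpoint in $R_C$.

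The key geometric step will be the Burton--Keane-style estimate
\[
|T \cap R_C| \leq |\partial^{\omega'}_E R_C|,
\]
which I would establish as follows. For each $\omega'$-cluster $K'$ meeting $R_C$, every trifurcation $t \in T \cap R_C \cap K'$ emits three edge-disjoint infinite arms in $\omega'$, each of which must exit $R_C$ through some edge of $\partial^{\omega'}_E R_C \cap K'$. Applying the elementary leaf-counting identity that a finite tree with $k$ internal vertices of degree at least three has at least $k+2$ leaves to an appropriately chosen spanning subforest of $K'$ restricted to $R_C$ and augmented by its first exit edges yields $|T \cap R_C \cap K'| \leq |\partial^{\omega'}_E R_C \cap K'|$; summing over clusters $K'$ gives the claim. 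Since $T \cap S \subseteq T \cap R_C$, this gives the deterministic inequality $|C| \geq |T \cap S|$ for every such cut $C$.

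Taking the minimum over $C$ and then expectations,
\[
\bE_p\!\left[ \mathscr{P}_\omega(S \to \infty) \right] \geq \bE\!\left[ |T \cap S| \right] = \sum_{v \in S} \bP(v \in T) \geq \delta_p\, |V(S)| \geq \frac{\delta_p}{\Delta}\, |E(S)|,
\]
where $\Delta$ is the (common) degree in $G$, and so the proposition holds with $c_p = \delta_p/\Delta$. The main obstacle I anticipate is the Burton--Keane counting step: although the leaf-counting identity is immediate in trees, its application to the restricted cluster $K' \cap R_C$---which need not itself be a tree---requires care in selecting spanning subforests that preserve all three edge-disjoint infinite arms emanating from every trifurcation, together with the correct bookkeeping of exit edges shared between distinct trifurcations. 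A secondary, and genuinely separate, technical point is that \cref{lem:furcations} itself requires an adaptation beyond the unimodular argument of \cite{BLS99} in the nonunimodular case.
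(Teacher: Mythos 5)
Your proposal is correct and follows essentially the same route as the paper: dominate an automorphism-invariant process with furcations via \cref{lem:furcations}, run the Burton--Keane counting argument, and use transitivity plus bounded degree to convert the per-vertex furcation probability into the bound $c_p|E(S)|$. The only difference is cosmetic: you bound an arbitrary finite cut $C$ from below by the number of trifurcations in the region $R_C$ it encloses, whereas the paper works on the dual (max-flow) side, constructing $|\Lambda \cap S|$ edge-disjoint paths to infinity inside trimmed spanning trees of the clusters via its \cref{lem:Euler} --- which is precisely the leaf-counting step you sketch and flag as needing care, so no new idea is missing.
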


The proof of \cref{prop:BurtonKeane} uses elements of the Burton-Keane \cite{burton1989density} argument, which is usually used to establish uniqueness of the infinite cluster in percolation on \emph{amenable} transitive graphs. This argument in fact establishes that the inequality \eqref{eq:BurtonKeane} holds when there are infinitely many infinite clusters for percolation with parameter $p$. (In the amenable case one can then prove by contradiction that such $p$ do not exist, since the left hand side of \eqref{eq:BurtonKeane} is at most $|\partial_E S|$.) To apply this argument in our setting, since we are not assuming that there are infinitely many infinite clusters in Bernoulli-$p$ percolation, we will first need to find an appropriate automorphism-invariant percolation process that has infinitely many infinite connected components each of which is infinitely ended and which is stochastically dominated by Bernoulli $p$-percolation. We do this by a case analysis according to whether or not $G$ is \emph{unimodular}, applying the results of \cite{BLS99} in the unimodular case and of \cite{Hutchcroftnonunimodularperc} in the nonunimodular case.

We will borrow in particular from the presentation of the Burton-Keane argument given in \cite[Theorem 7.6]{LP:book}.
Let $G=(V,E)$ be a graph and let $\eta \in \{0,1\}^E$. We say that a vertex $v$ is a \textbf{furcation} of $\eta$ if closing all edges incident to $v$ would split the component of $v$ in $\eta$ into at least three distinct infinite connected components.

\begin{lemma}
\label{lem:furcations}
Let $G=(V,E)$ be a connected, locally finite, nonamenable, transitive graph, and let $p_c(G)<p \leq 1$. Then there exists an automorphism-invariant percolation process $\eta$ on $G$ such that the following hold:
\begin{enumerate}
\item The origin is a furcation of $\eta$ with positive probability.
\item The process $\eta$ is stochastically dominated by Bernoulli-$p$ bond percolation on $G$.
\end{enumerate}
\end{lemma}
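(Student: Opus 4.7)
My plan is to argue by a case split on whether $G$ is unimodular, and in each case produce an automorphism-invariant percolation $\eta \preceq \bP_p$ which has infinitely many infinite clusters almost surely. Once such an $\eta$ is in hand, a Burton-Keane-style mass-transport count (as in e.g.\ the proof of Theorem 7.6 of \cite{LP:book} in the unimodular case, and the nonunimodular analog developed in \cite{Hutchcroftnonunimodularperc}) shows that a positive density of vertices of $\eta$ are furcations, and hence by invariance and transitivity the origin is a furcation of $\eta$ with positive probability. Conclusion (2) of the lemma is automatic from the construction $\eta \preceq \bP_p$, so everything reduces to constructing $\eta$.

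In the unimodular case the plan is to invoke the results of \cite{BLS99}. When $p<p_u(G)$, Bernoulli-$p$ itself has infinitely many infinite clusters almost surely and we may simply take $\eta = \omega$. When $p \geq p_u(G)$, the unique infinite cluster in Bernoulli-$p$ has infinitely many ends (the nonamenability of $G$ together with transitivity rules out the one- and two-ended alternatives); the indistinguishability theorem of \cite{BLS99}, combined with an invariant thinning of Bernoulli-$p$ (for instance, deleting all edges incident to an independent Bernoulli set of vertices of small density), then produces an invariant $\eta \preceq \bP_p$ with infinitely many infinite clusters. This is essentially the construction BLS use to prove their own furcation-type results.

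In the nonunimodular case I would instead invoke the main theorem of \cite{Hutchcroftnonunimodularperc}, which asserts that for every $p \in (p_c, 1)$, supercritical Bernoulli-$p$ percolation on a nonamenable nonunimodular transitive graph has infinitely many infinite clusters almost surely. Therefore $\eta = \omega$ already satisfies the required property, and furcations at positive probability follow from the nonunimodular Burton-Keane analog developed in that paper (which replaces the usual mass transport principle by a suitable argument tailored to the nonunimodular setting).

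The main obstacle lies in the $p \geq p_u(G)$ sub-case of the unimodular setting: here Bernoulli-$p$ itself has a unique infinite cluster, and one must carefully break that uniqueness by an invariant modification while staying stochastically below $\bP_p$. This step relies essentially on the indistinguishability of infinite clusters from \cite{BLS99} and is the only place where that deep result is used; the remaining cases are comparatively routine.
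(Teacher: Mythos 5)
Your overall architecture (case split on unimodularity, reduce to producing an invariant $\eta\preceq\bP_p$, conclude via invariance and transitivity) matches the paper, but the execution has a genuine gap in exactly the sub-case you identify as the main obstacle, and a smaller misstatement in the nonunimodular case. For the nonunimodular case: the main theorem of \cite{Hutchcroftnonunimodularperc} is that $p_c<p_u$, \emph{not} that non-uniqueness holds for every $p\in(p_c,1)$; taking $\eta=\omega$ therefore fails when $p>p_u$. This is easily repaired (and is what the paper does whenever $p_c<p_u$): take $\eta$ to be Bernoulli-$q$ for some $q\in(p_c,p_u\wedge p)$, which is stochastically dominated by Bernoulli-$p$ and has furcations with positive probability by the usual Burton--Keane argument, since Bernoulli percolation is insertion tolerant. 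Note in passing that insertion tolerance is what lets Burton--Keane convert non-uniqueness into furcations; a general invariant process with infinitely many infinite clusters (e.g.\ an invariant family of disjoint bi-infinite paths) need not have any furcations, so ``infinitely many infinite clusters plus a mass-transport count'' is not by itself enough for your reduction.

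The real problem is the unimodular case with $p\geq p_u$. First, your structural claim is false: the unique infinite cluster there is one-ended almost surely, not infinitely-ended --- if it had at least two ends with positive probability, closing a suitable finite edge set (deletion tolerance) would give two infinite clusters with positive probability, contradicting a.s.\ uniqueness; nonamenability of $G$ does not transfer to the end structure of the cluster. Second, the proposed construction --- thinning Bernoulli-$p$ by deleting edges incident to an independent low-density Bernoulli vertex set, invoking indistinguishability --- has no justification: for small density the thinned process should retain a unique infinite cluster, for large density it may have none, and there is no guarantee of an intermediate non-uniqueness regime; indeed, whether any Bernoulli-like non-uniqueness regime exists at all in the unimodular nonamenable setting is the open $p_c<p_u$ problem, which the paper explicitly does not resolve. (Also, indistinguishability is due to Lyons and Schramm \cite{LS99}, not \cite{BLS99}, and it plays no role in producing non-uniqueness.) The paper's proof of \cref{lem:furcations} avoids non-uniqueness entirely in this case: by \cite[Lemma 3.8 and Theorem 3.10]{BLS99}, Bernoulli-$p$ stochastically dominates an automorphism-invariant percolation that is a.s.\ a forest all of whose components are infinitely-ended (indeed nonamenable) trees, and any such forest has furcations deterministically, whence the origin is a furcation with positive probability by invariance and transitivity. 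That domination theorem is the essential input your argument is missing, and it cannot be replaced by the thinning heuristic you describe.
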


Here, we recall that if $G$ is a connected, locally finite, transitive graph, a random variable $\eta$ taking values in $\{0,1\}^E$ is an \emph{automorphism-invariant percolation process} if its law is invariant under the automorphisms of $G$. See e.g.\ \cite{LP:book,BLS99} for background on general automorphism-invariant percolation processes.


Before beginning the proof of this lemma, let us briefly introduce the notion of \emph{unimodularity}. We refer the reader to \cite[Chapter 8]{LP:book} for further background. Let $G=(V,E)$ be a connected, locally finite, transitive graph with automorphism group $\Aut(G)$.  The \textbf{modular function} $\Delta: V^2 \to \R$ of $G$ is defined to be
$\Delta(u,v) = |\stab_v u |/|\stab_u v|$, 
where $\stab_v = \{ \gamma \in \Aut(G) : \gamma v = v \}$ is the stabilizer of $v$ in $\Aut(G)$ and $\stab_v u = \{\gamma u : \gamma \in \stab_v\}$ is the orbit of $u$ under $\stab_v$. We say that $G$ is \textbf{unimodular} if $\Delta(u,v) \equiv 1$ and that $G$ is \textbf{nonunimodular} otherwise. Most graphs occurring in examples are unimodular, including all Cayley graphs of finitely generated groups and all transitive amenable graphs \cite{MR1082868}. 

\begin{proof}[Proof of \cref{lem:furcations}] 
Recall the definition of the \textbf{uniqueness threshold} $p_u=p_u(G)=\inf\{ p\in [0,1] : \omega$ has a unique infinite cluster $\bP_p$-a.s.$\}$. 
The usual Burton-Keane argument implies that if $q\in (p_c,p_u)$ then the origin is a furcation for Bernoulli-$q$ percolation with positive probability; See in particular the proof of \cite[Theorem 7.6]{LP:book}.
 Thus, if $p_c(G)<p_u(G)$ we may take $\eta$ to be Bernoulli-$q$ percolation for some $p_c < q < p_u \wedge p$. The main result of \cite{Hutchcroftnonunimodularperc} states that $p_c(G)<p_u(G)$ if $G$ is nonunimodular (or more generally if $G$ has a nonunimodular transitive subgroup of automorphisms), which completes the proof in this case.
 Now suppose that $G$ is unimodular.  A result of Benjamini, Lyons, and Schramm \cite[Lemma 3.8 and Theorem 3.10]{BLS99} states that for every $p\in (p_c,1]$, Bernoulli-$p$ bond percolation on $G$ stochastically dominates an automorphism-invariant percolation process that is almost surely a forest all of whose components are infinitely-ended (see also \cite[Theorem 8.13]{AL07}). More precisely, \cite[Theorem 3.10]{BLS99} implies that this is the case\footnote{Strictly speaking, \cite[Theorem 3.10]{BLS99} states that $\eta$ can be taken to be a forest with nonamenable components. This is stronger than the statement we give here since every nonamenable tree is infinitely-ended. Similarly, the statement of \cite[Lemma 3.8]{BLS99} requires that the Bernoulli percolation process has an infinitely-ended component rather than non-uniqueness \emph{per se}, but it is well-known that these two properties are equivalent for Bernoulli percolation by insertion and deletion tolerance.} when there is uniqueness at $p$, while \cite[Lemma 3.8]{BLS99} implies that this is the case when there is non-uniqueness at $p$. (Here, we recall that a locally finite tree is infinitely-ended if it contains infinitely many disjoint infinite simple paths.) Any such automorphism-invariant process clearly has furcations, and so meets the conditions required by the lemma.
\end{proof}

The rest of the proof of \cref{prop:BurtonKeane} is very similar to the Burton-Keane argument as presented in \cite[Section 7.3]{LP:book}; we include the details for completeness. We will use the following elementary combinatorial lemma. 



\begin{lemma}
\label{lem:Euler}
Let $T=(V_T,E_T)$ be a tree in which every vertex has degree at least two. If $A \subseteq V_T$ is a finite set of vertices in $T$, then there exists a set of $\sum_{v\in A} (\deg(v)-2)$ edge-disjoint paths from $A$ to $\infty$ in $T$.
\end{lemma}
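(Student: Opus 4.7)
The plan is to reduce the problem to a counting argument on the minimal subtree $T_A \subseteq T$ spanned by $A$, i.e.\ the union of the (unique) paths in $T$ between pairs of vertices in $A$. Observe that $T_A$ is a finite subtree, every leaf of $T_A$ lies in $A$ (any leaf of $T_A$ has to be the endpoint of a path in $T_A$, hence in $A$), and every Steiner vertex (a vertex in $V(T_A) \setminus A$) has degree at least $2$ in $T_A$ by construction. The handshake lemma applied to $T_A$ gives
\[
\sum_{v\in A} \deg_{T_A}(v) + \sum_{w \in V(T_A)\setminus A} \deg_{T_A}(w) = 2(|V(T_A)|-1),
\]
and since the second sum is at least $2|V(T_A)\setminus A|$ we obtain the key combinatorial inequality $\sum_{v\in A} \deg_{T_A}(v) \leq 2|A|-2$ (which also holds trivially if $|A|\leq 1$).

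Next, for each edge $e = \{v,u\}$ with $v \in A$ and $e \notin T_A$, let $T_e^u$ denote the component of $u$ in $T\setminus\{e\}$. I would first check that $u \notin V(T_A)$ (else combining $e$ with the $T_A$-path from $v$ to $u$ would yield a cycle in $T$), and hence that $T_e^u$ is disjoint from $V(T_A)$. Then I would argue that $T_e^u$ is infinite: in $T_e^u$ every vertex other than $u$ retains its degree from $T$ and hence has degree $\geq 2$, while $u$ has degree $\deg_T(u)-1 \geq 1$. A finite tree must have at least two leaves (or consist of a single isolated vertex), and either possibility contradicts this degree lower bound, so $T_e^u$ must be infinite. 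A standard König's-lemma-style argument on the locally finite infinite tree $T_e^u$ then produces an infinite simple path starting at $u$, and prepending the edge $e$ yields an infinite path from $v$ into $\infty$.

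These constructed paths are pairwise edge-disjoint: if two distinct edges $e = \{v,u\}$ and $e' = \{v',u'\}$ as above produced rays that shared a vertex $w$, one would produce two distinct paths between $u$ and $u'$ in $T$ (one through $w$, the other through $T_A$), contradicting $T$ being a tree; thus the subtrees $T_e^u$ are pairwise disjoint, and since the starting edges $e$ are all distinct we obtain edge-disjointness. The total number of rays produced is
\[
\sum_{v \in A} \bigl(\deg_T(v) - \deg_{T_A}(v)\bigr) \;\geq\; \sum_{v\in A}\bigl(\deg_T(v)-2\bigr) + 2,
\]
using the combinatorial inequality above; this is more than the required $\sum_{v \in A}(\deg(v)-2)$, completing the proof.

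The main obstacle is organising the argument in the degenerate cases (when $|A|\leq 1$, or when $V(T_A)=A$ so that there are no Steiner vertices), where the handshake bound becomes tight or vacuous; these need to be handled but are straightforward. The other small technical step is the proof that $T_e^u$ is infinite, which is a short minimum-degree argument rather than a genuine difficulty.
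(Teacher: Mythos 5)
Your proof is correct and follows essentially the same route as the paper's: you form the minimal subtree spanned by $A$, use the handshake lemma to get $\sum_{v\in A}\deg_{T_A}(v)\leq 2|A|-2$, and then count the edges incident to $A$ leaving that subtree, each of which launches an infinite ray in its own branch, giving at least $2+\sum_{v\in A}(\deg(v)-2)$ edge-disjoint paths to infinity. You merely spell out details the paper leaves implicit (infinitude and disjointness of the branches), and your appeal to K\"onig's lemma could be replaced by the simpler greedy extension using minimum degree two, but this is immaterial.
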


\begin{proof}
We may assume that $A$ is non-empty, the claim being trivial otherwise.
Let $\Gamma$ be the smallest connected subtree of $T$ containing $A$, so that $\Gamma$ is the union of all finite simple paths beginning and ending in $A$. 
Let $I$ be the set of edges that are incident to a vertex of $A$ but do not belong to $\Gamma$, and observe that the maximum number of edge-disjoint paths from $A$ to infinity is at least $|I|$. (In fact this is an equality.) Indeed, for each element $e$ of $I$ we can choose an infinite simple path from $A$ to $\infty$ in $T$ with first edge $e$ that does not revisit $\Gamma$, and any choice of such a path for each $e\in I$ will result in a collection of paths that are mutually edge-disjoint.
 Let $W$ be the set of vertices that belong to $\Gamma$. The definition of $\Gamma$ ensures that every vertex $w \in W \setminus A$ has degree at least $2$ in $\Gamma$. Writing $\deg_T$ and $\deg_\Gamma$ for degrees in $T$ and $\Gamma$ and $E_\Gamma$ for the edge set of $\Gamma$, we deduce that
 \[
\sum_{v\in A} \deg_\Gamma(v) + 2 |W\setminus A| \leq \sum_{v\in A} \deg_\Gamma(v) + \sum_{v\in W \setminus A} \deg_\Gamma(v) = \sum_{v\in W} \deg_\Gamma(v) = 2 |E_\Gamma| = 2 |W|-2
 \]
 and hence that
 \[
|I|=\sum_{v\in A}(\deg_T(v)-\deg_{\Gamma}(v)) \geq \sum_{v\in A} \deg_T(v) - 2|A| + 2 = 2+\sum_{v\in A} (\deg(v)-2),
\]
completing the proof.
\end{proof}

\begin{proof}[Proof of \cref{prop:BurtonKeane}]
Let $G$ be a connected, locally finite, transitive, nonamenable graph, and let $o$ be a fixed root vertex of $G$. 
Let $p_c<p \leq 1$, let $\omega$ be Bernoulli-$p$ bond percolation, and let $\eta$ be as in \cref{lem:furcations}. Let $\Lambda$ be the set of furcation points of $\eta$. Since $\omega$ stochastically dominates $\eta$ we have that $\bE_p \sP_\omega (S \to \infty) \geq \E \sP_\eta(S \to\infty)$ for every $S \subseteq V$ finite, and so it suffices to prove that
\begin{equation}
\label{eq:furcations1}
\E \sP_\eta(S \to\infty) \geq  \P( o \in \Lambda ) |S| \geq  \frac{\P( o \in \Lambda )}{\deg(o)} |E(S)|
\end{equation}
for every $S \subseteq V$ finite. Since $\eta$ is automorphism-invariant and $G$ is transitive, it suffices to prove the deterministic statement
\begin{equation}
\label{eq:furcation2}
\sP_\eta(S \to \infty) \geq |\Lambda \cap S |,
\end{equation}
since \eqref{eq:furcations1} follows from \eqref{eq:furcation2} by taking expectations. 

To prove \eqref{eq:furcation2}, pick a spanning tree of each infinite cluster of $\eta$ and let $F$ be the union of all of these spanning trees. For each connected component $T$ of $F$, let $\Lambda_T$ be the set of furcations of $T$, and note that $\Lambda$ is contained in the disjoint union of the sets $\Lambda_T$. For each component $T$ of $F$, let $T'$ be obtained from $T$ by iteratively deleting all vertices of degree at most $1$, and note that the set of furcations of $T'$ coincides with the set of furcations of $T$. Since every vertex in $\Lambda_T$ has degree at least $3$ in $T'$, it follows from \cref{lem:Euler} that
\[
\sP_T(S \to \infty) \geq |\Lambda_T \cap S|.
\]
Since the components of $F$ are disjoint, we deduce that
\[
\sP_\eta(S \to \infty) \geq \sP_F(S \to \infty) = \sum_T \sP_T(S \to \infty) \geq \sum_T |\Lambda_T \cap S|  \geq  |\Lambda \cap S|
\]
as claimed, where the sums are over the connected components of $F$.
\end{proof}

\begin{proof}[Proof of \cref{prop:NegativeTerm}]
Fix a vertex $v$ of $G$. 
Let $\omega_1,\omega_2$ be independent copies of Bernoulli-$p$ bond percolation on $G$, and let $\omega \in \{0,1\}^E$ be defined by
\[
\omega(e) = \begin{cases}
\omega_1(e) & \text{ if $e \in E(K_v(\omega_1))$}\\
\omega_2(e) & \text{ if $e \notin E(K_v(\omega_1))$}.
\end{cases}
\]
Since we can explore $K_v(\omega_1)$ without revealing the status of any edge in $E \setminus E(K_v(\omega_1))$, the configuration $\omega$ is also distributed as Bernoulli-$p$ bond percolation on $G$. If $K_v(\omega)=K_v(\omega_1)$ is finite then any path from $K_v(\omega_1)$ to $\infty$ in $\omega_2$ must pass through the set $\left\{e \in E : \omega(e)=0, \text{ and } E_v(\omega^e) = \infty\right\}$, and we deduce that 
\[
\#\left\{e \in E : \omega(e)=0, E_v(\omega^e) = \infty \right\} \geq  \sP_{\omega_2}(K_v(\omega) \to \infty) = \sP_{\omega_2}(K_v(\omega_1) \to \infty)
\]
when $K_v(\omega)$ is finite. Taking expectations on both sides and using that $\omega_1$ and $\omega_2$ are independent, we deduce from \cref{prop:BurtonKeane} that
\[
\E\left[\#\left\{e \in E : \omega(e)=0,\, E_v(\omega) \leq n < E_v(\omega^e)\right\} \mid K_v(\omega) \right] \geq c_p  E_v(\omega) \mathbbm{1}(E_v(\omega) \leq n)
\]
where $c_p>0$ is the constant from \cref{prop:BurtonKeane}. Using the standard fact that $\E [XY |\cF] = X \E[Y |\cF]$ whenever $X$ and $Y$ are bounded random variables and $\cF$ is a $\sigma$-algebra such that $X$ is $\cF$-measurable, we deduce immediately that if $F:\sH_v \to [0,\infty)$ is nonnegative then
\begin{multline*}\frac{1}{1-p}\sum_{e\in E} \E\left[ F(K_v(\omega)) \mathbbm{1}\bigl(\omega(e)=0,\, E_v(\omega) \leq n < E_v(\omega^e)\bigr)  \mid K_v(\omega) \right]\\ \geq \frac{c_p}{1-p} E_v(\omega) F(K_v(\omega)) \mathbbm{1}(E_v(\omega) \leq n),\end{multline*}
and the claim follows by taking expectations over $K_v(\omega)$.
\end{proof}

\subsection{Bounding the total derivative}
\label{subsec:total_derivative}

In this section we prove \cref{prop:big_fluctuation}.  This is the easiest of the three propositions used in the proof of \cref{thm:main}; The methods used are completely standard and go back to some of the earliest work on percolation, see e.g.\ \cite{klein1982algorithmic,durrett1985thermodynamic} and \cite[Section 3]{MR901151}.

\begin{proof}[Proof of \cref{prop:big_fluctuation}]
Let $G=(V,E)$ be a connected, locally finite graph, let $v\in V$, and let $p\in (0,1)$. 
%
%
Let 
 $(X_i)_{i\geq 1}$ be a sequence of i.i.d.\ mean-zero random variables with $\bP(X_i=1-p)=p$ and $\bP(X_i=-p)=1-p$, and let $Z_n = \sum_{i=1}^n X_i$.
Exploring the cluster of $v$ one edge at a time leads to a coupling of percolation on $G$ with the sequence $(X_i)_{i\geq 1}$ and a stopping time $T$ such that $E_v = T$ and if $E_v< \infty$ then $h_p(K_v)= -Z_T$; See e.g.\ \cite[Section 3]{1808.08940} for details.
Using this coupling, we can write
\begin{align*}
\bE^G_p\left[ |h_p(K_v)| e^{t E_v } \mathbbm{1}\left(\alpha E_v  \leq |h_p(K_v)| <\infty \right)\right] = \E \left[ |Z_T| e^{tT} \mathbbm{1}(\alpha T \leq |Z_T| < \infty) \right]
\end{align*}
for each $t,\alpha>0$.
Since $|Z_n|\leq n$ for each $n \geq 0$, we also have that
\begin{align}
\E \left[ |Z_T| e^{tT} \mathbbm{1}(\alpha T \leq |Z_T| < \infty) \right]
\leq \sum_{n=1}^\infty ne^{tn} \P(T=n, |Z_n| \geq \alpha n)
\leq \sum_{n=1}^\infty ne^{tn} \P(|Z_n| \geq \alpha n)
\label{eq:martingale_bound}
\end{align}
 for each $t,\alpha >0$. Finally, since $|X_i|\leq1$ for every $i \geq 1$, we deduce from Azuma's inequality that 
\[
\P(|Z_n| \geq \alpha n) \leq 2 \exp\left[ -\frac{\alpha^2}{2} n \right],
\]
for every $n \geq 1$ and $\alpha>0$, 
so that the right hand side of \eqref{eq:martingale_bound} is finite whenever $t<\alpha^2/2$. 
\end{proof}

\subsection{Bounding the positive term}
\label{subsec:positive_term}

In this section we prove Proposition \ref{prop:Skinny_Br}. This is the most difficult of the three propositions going into the proof of Theorem \ref{thm:main}. 
The basic idea is to control the probability $\bP_p(E_v = n, \Bridges_k(K_v,v) \geq \alpha E_v)$ by induction on $k$. The proof will (implicitly) establish the explicit moment estimate
\begin{equation}
\label{eq:bridges_preview}
\bE_p^G\left[E_v^{k+1} \mathbbm{1}(\alpha E_v \leq \Bridges_k(K_v,v) <\infty)\right] \leq k! \left(\frac{2^{18} e^{2}}{\alpha^3 (1-p)^{96/\alpha}}\right)^k,
\end{equation}
which holds for every countable, locally finite graph $G=(V,E)$, $v\in V$, $p\in [0,1)$, $k\geq 1$, and $0<\alpha \leq 1$.
It will be important for us to work on arbitrary graphs in this section to facilitate the induction. Indeed, working on arbitrary graphs (or arbitrary subgraphs of a given graph) in this way is a useful trick to avoid non-monotonicity problems in inductive analyses of percolation, which we believe was first used by Kozma and Nachmias in \cite{MR2551766}
 following a suggestion of Peres.

 We  begin with the base case $k=1$. Note that  $\Bridges_1(K_v,v)$ is bounded from above by the  \emph{intrinsic radius} $R_v  $ of $K_v$, i.e., the maximal graph distance in $K_v$ of a vertex from $v$. It will therefore be relevant to bound the probability of having a large \emph{skinny cluster}, whose intrinsic radius is of the same order as its volume; think of $m$ below as being at least $\alpha n$ in the following lemma. 

\begin{lemma}
\label{lem:SkinnyRadius}
Let $G=(V,E)$ be a locally finite graph and let $v$ be a vertex of $G$.
Then the bound
\[\bP^G_p(R_v \geq m \text{ and } E_{v} \leq n) \leq  \exp\left[-\frac{1}{2}(1-p)^{4n/m}m\right]\]
 holds for every $p\in [0,1)$ and $n  \geq m \geq 0$.
\end{lemma}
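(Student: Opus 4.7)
My approach is to perform a breadth-first exploration of $K_v$ starting from $v$, and to combine the per-step conditional probabilities of success with a pointwise Chernoff-type estimate that exploits the budget $\sum k_i \leq E_v$.

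Explicitly, set $L_0 = \{v\}$ and, inductively, at step $i \geq 1$, simultaneously reveal the statuses of all edges in $G$ with one endpoint in $L_{i-1}$ and the other in $V \setminus \bigcup_{j < i} L_j$; write $k_i$ for the number of edges thus revealed, and let $L_i$ be the set of vertices in $V \setminus \bigcup_{j < i} L_j$ reached by at least one such open edge. Then $L_i$ is exactly the set of vertices at intrinsic distance $i$ from $v$ in $\omega$, so the event $\{R_v \geq m\}$ is the event that $L_i \neq \emptyset$ for all $i \leq m$. Moreover each revealed edge is incident to $V(K_v)$ and is revealed only once, hence $\sum_{i \geq 1} k_i \leq E_v$.

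Since $k_i$ is $\mathcal F_{i-1}$-measurable (where $\mathcal F_{i-1}$ is the $\sigma$-algebra generated by the first $i-1$ steps) and, given $\mathcal F_{i-1}$, the $k_i$ freshly revealed edges are i.i.d.\ $\mathrm{Bernoulli}(p)$, the conditional probability that step $i$ produces at least one open edge is exactly $1 - (1-p)^{k_i}$. An iterated tower argument (see the obstacle below) then yields
\[
\bP_p(R_v \geq m,\, E_v \leq n) \;\leq\; \bE_p\!\Bigl[\,\mathbbm{1}\bigl(\textstyle\sum_{i=1}^m k_i \leq n\bigr) \prod_{i=1}^m \bigl(1 - (1-p)^{k_i}\bigr)\,\Bigr].
\]
On the event $\sum_{i=1}^m k_i \leq n$, at most $m/2$ of the indices $i$ can satisfy $k_i > 2n/m$, so at least $m/2$ of the factors in the product are bounded by $1 - (1-p)^{2n/m}$; bounding the remaining factors by $1$ and using $1-x \leq e^{-x}$ yields the pointwise inequality
\[
\mathbbm{1}\bigl(\textstyle\sum_i k_i \leq n\bigr)\prod_{i=1}^m\bigl(1-(1-p)^{k_i}\bigr) \;\leq\; \exp\!\Bigl[-\tfrac{1}{2}(1-p)^{2n/m}\,m\Bigr],
\]
which is stronger than the claimed bound since $(1-p)^{2n/m} \geq (1-p)^{4n/m}$.

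\textbf{Main obstacle.} The main subtlety is justifying the tower step that produces the displayed expectation: the indicator $\mathbbm{1}(\sum_{i \leq m} k_i \leq n)$ involves $k_m$, which is $\mathcal F_{m-1}$-measurable but not $\mathcal F_{m-2}$-measurable, so the layer-by-layer conditioning is not entirely clean. I would handle this by running a stopped version of the exploration that halts as soon as $\sum_{j \leq i} k_j$ would exceed $n$, so that the stopping criterion at each step is $\mathcal F_{i-1}$-measurable, and then perform the clean iteration on the stopped process. After that the remaining analysis is a routine deterministic Chernoff computation.
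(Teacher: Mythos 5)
Your exploration scheme and pigeonhole step are exactly those of the paper (the paper counts degree sums of the current level, giving the threshold $4n/m$; your count of newly revealed edges gives $2n/m$, which is fine and slightly sharper). The genuine gap is the displayed inequality
\[
\bP_p\bigl(R_v \geq m,\, E_v \leq n\bigr) \;\leq\; \bE_p\Bigl[\mathbbm{1}\bigl(\textstyle\sum_{i=1}^m k_i \leq n\bigr)\prod_{i=1}^m\bigl(1-(1-p)^{k_i}\bigr)\Bigr],
\]
which is false in general, and the problem is not (only) the measurability of the budget indicator. Take $G$ to be the path $v-a-b$ and $m=n=2$: the left-hand side is $p^2$, while on the right-hand side $k_2=0$ whenever the edge $va$ is closed, so the right-hand side equals $p\cdot p^2=p^3<p^2$. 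In fact the reverse inequality always holds: conditioning on $\mathcal{F}_{m-1}$, the left side equals $\bE_p[\mathbbm{1}(\sum_{i\leq m}k_i\leq n)\,\mathbbm{1}(S_1\cap\cdots\cap S_{m-1})\,(1-(1-p)^{k_m})]$ (with $S_i$ the survival event at stage $i$), and your integrand is obtained from this one by multiplying in the extra factors $1-(1-p)^{k_i}\leq 1$ for $i<m$, while off $S_1\cap\cdots\cap S_{m-1}$ it vanishes because death at stage $j\leq m-1$ forces $k_{j+1}=0$. The stopping-time fix you propose only makes the indicator predictable; the same example (where the stopping never triggers) defeats the stopped version as well. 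The underlying point is that one cannot bound the probability of an intersection by the expectation of the product of the \emph{random} per-stage conditional probabilities in this direction.

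The repair is small and lands you on the paper's argument: do not try to retain the random factors $1-(1-p)^{k_i}$; replace them by a deterministic bound at predictably ``good'' stages. Set $q=1-(1-p)^{2n/m}$ and call stage $i$ good if $k_i\leq 2n/m$; goodness is $\mathcal{F}_{i-1}$-measurable, the conditional probability of surviving a good stage is at most $q$, and on $\{R_v\geq m,\,E_v\leq n\}$ at least $m/2$ of the stages $1,\dots,m$ are good (your budget computation). A one-line supermartingale argument then finishes: $M_j=\prod_{i\leq j}\mathbbm{1}(S_i)\,q^{-\mathbbm{1}(k_i\leq 2n/m)}$ satisfies $\bE[M_j\mid\mathcal{F}_{j-1}]\leq M_{j-1}$, so $\bE[M_m]\leq 1$, while $M_m\geq q^{-m/2}$ on the target event, giving $\bP_p(R_v\geq m,\,E_v\leq n)\leq q^{m/2}\leq\exp\bigl[-\tfrac{1}{2}(1-p)^{2n/m}m\bigr]$, which is the (slightly stronger) bound you wanted. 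This is precisely how the paper concludes, phrased there as ``at each such stage the conditional probability of survival is at most $1-(1-p)^{4n/m}$'' followed by the product bound $(1-(1-p)^{4n/m})^{m/2}$, without ever passing through an identity like your displayed expectation.
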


\begin{proof}
Consider exploring the cluster of $v$ as follows: at stage $i$,  expose the value of those edges that touch $\partial B_{\mathrm{int}}(v,i-1)$, the set of vertices with intrinsic distance exactly $i-1$ from $v$, and have not yet been exposed. Stop when $\partial B_{\mathrm{int}}(v,i)=\eset $. 
If $R_v \geq m$ and $E_v\leq n$, there must exist at least $m/2$ stages $i\in \{1,\ldots,m\}$ where the sum of degrees in $G$ of the vertices in $\partial B_\mathrm{int}(v,i-1)$ is at most $4n/m$. At each such stage, the conditional probability that $\partial B_\mathrm{int}(v,i)\neq \eset$ given everything that has happened up to stage $i-1$ is at most $1-(1-p)^{4n/m}$. We deduce that
\[\bP^G_p\bigl( R_v \geq m \text{ and } E_{v} \leq n\bigr) \leq  \left(1-(1-p)^{4n/m}\right)^{m/2} \leq \exp\left[-\frac{1}{2}(1-p)^{4n/m}m\right] \]
as claimed, 
where the inequality $1-x \leq e^{-x}$ has been used in the second inequality.
\end{proof}

\begin{remark}
Using \cref{lem:SkinnyRadius}, we can already conclude the proof of the weaker statement that the \emph{intrinsic radius} of a finite cluster has an exponential tail in supercritical percolation on a nonamenable transitive graph. This can be done via the same strategy used for the volume, but using \cref{lem:SkinnyRadius} instead of \cref{prop:Skinny_Br} to bound the term $\bU_{p,n}[e^{tR_v}]$, which is easily seen to be at most $\frac{1}{p}\bE_{p,n}[R_v e^{tR_v}]$.
\end{remark}

We now introduce the notation that we will use to set up our induction.
Let $H$ be a finite connected graph. Recall that two vertices $u,v$ of $H$ are said to be (edge) \textbf{$2$-connected} to each other if there exist two edge-disjoint paths in $H$ from $u$ to $v$. Being $2$-connected is an equivalence relation, so that $H$ can be decomposed into a collection of $2$-connected components. An edge $e$ of $H$ is said to be a \textbf{bridge} if its endpoints are in different 2-connected components of $H$, or equivalently if the subgraph of $H$ spanned by all edges other than $e$ is disconnected. We define $\Tree(H)$ to be the finite graph whose vertices are the 2-connected components of $H$ and where two 2-connected components $A$ and $B$ of $H$ are connected in $\Tree(H)$ if there is an edge of $H$ with one endpoint in $A$ and the other in $B$. It follows readily from the definitions that the edges of $\Tree(H)$ are naturally in bijection with the bridges of $H$ and that $\Tree(H)$ is a tree (hence the notation).

Now let $H$ be a finite connected graph, let $v$ be a vertex of $H$, and let $k\geq 1$. Write $[v]^2_H$ for the 2-connected component of $v$ in $H$.  
We define 
$\Leaf_k(H,v)$ to be the maximum number of edges in a subgraph of $\Tree(H)$ spanned by the union of the geodesics between $[v]^2_H$ and \emph{exactly} $k$ leaves of the tree $\Tree(H) $. (By a leaf we mean a degree one vertex distinct from the root $[v]_H^2$.) If $\Tree(H)$ has fewer than $k$ leaves we set $\Leaf_k(H,v)=0$. Note that
\begin{equation}
\label{e:BL}
\Bridges_k(H,v) = \max\left\{\Leaf_\ell(H,v) : 1 \leq \ell \leq k \right\}.
\end{equation}
%
For each $k\geq 1$ and $n,m \geq 0$ we define the quantity
\[
Q_k(p,n,m) = \sup\Bigl\{ \bP^G_p\bigl(  \Leaf_k(K_v,v)= m, E_{v} = n\bigr) : G=(V,E) \text{ countable, locally finite, $v\in V$} \Bigr\}.
\]
Note that 
$Q_k(p,n,m)$  is trivially equal to zero when $0< m < k$ or $n < m$ but that $Q_k(p,n,0)$ can be positive.
We will prove \cref{prop:Skinny_Br} by analysis of the following inductive inequality.


\begin{figure}
\centering
\includegraphics[height=5.55cm]{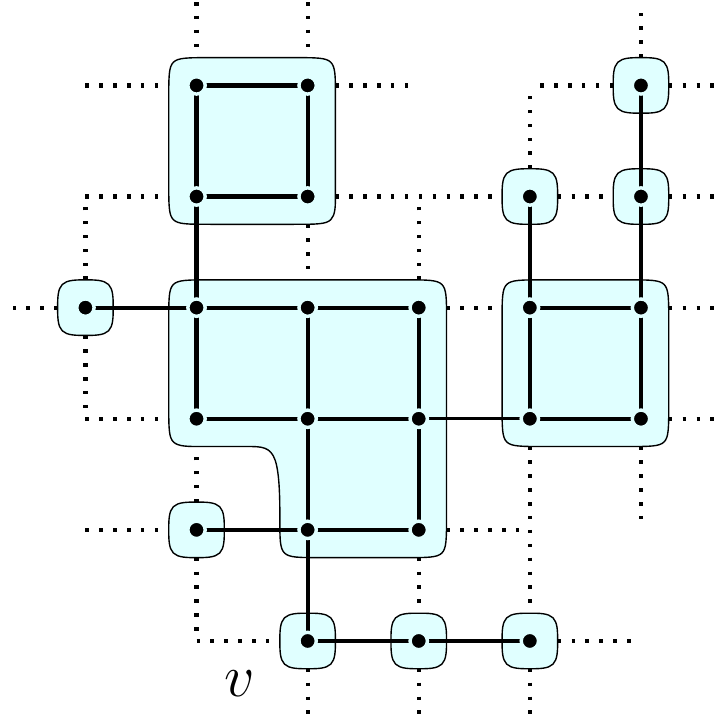} \quad \includegraphics[height=5.55cm]{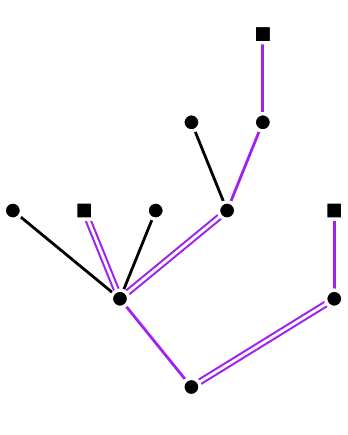} \quad  \includegraphics[height=5.55cm]{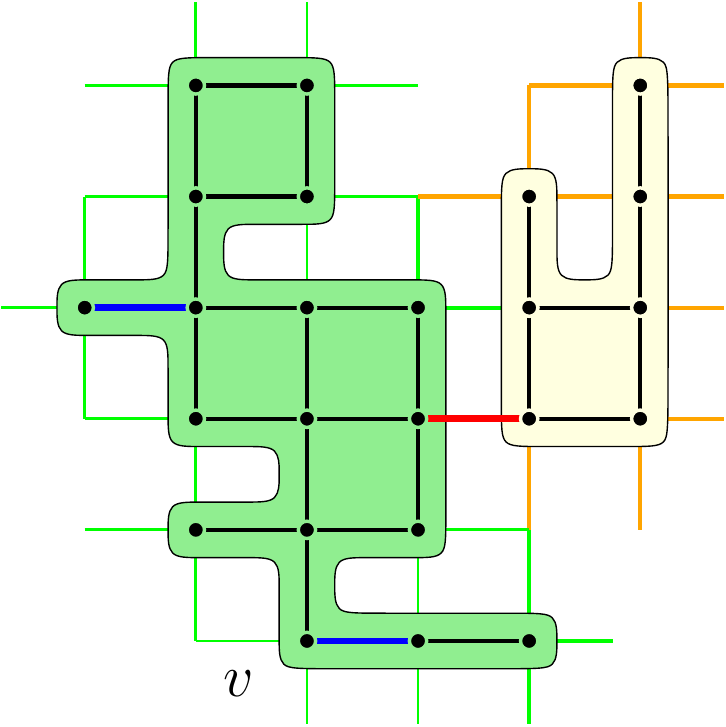}
\caption{Illustration of the notions used in the proof of \cref{lem:recursion}. Left: A finite subgraph $H$ of a graph $G$ (in this case $G=\Z^2$) with 2-connected components represented by blue shaded regions. Edges of $H$ are represented by solid lines, while edges of $\partial H$ are represented by dotted lines. Centre: The tree $\Tree(H)$, together with a distinguished set of leaves (drawn as squares) obtaining the maximum in the definition of $\Leaf_3(H,v)$. Purple lines denote edges of  $\Tree(H)$ lying in the tree $S$ spanned by the union of the geodesics between the root $o=[v]^2_H$ and this distinguished set of leaves. Double purple lines represent those edges that are also last-branching edges of $S$. Right: Cutting $H$ into two-components (green and yellow) by removing a last-branching bridge (red). The other two last-branching bridges associated to the distinguished set of leaves are in blue.}
\label{fig:bridges}
\end{figure}

\begin{lemma}
\label{lem:recursion}
The inequality
\begin{equation}
\label{eq:prec}
Q_{k+1}(p,n,m) \leq \frac{2e n}{k+1}\sum_{m_1=1}^{m-1}\sum_{n_1=1}^{n} Q_{k}\left(pe^{-1/n},n_{1},m_{1} \right)Q_1(p,n-n_1,m-m_1-1)
\end{equation}
holds for every $p\in [0,1]$, $k\geq 1$, and $n,m\geq 1$.
\end{lemma}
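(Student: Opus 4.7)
The plan is to prove the recursion by canonically decomposing the cluster $K_v$ along a \emph{last-branching bridge}: a bridge $e$ of $K_v$ corresponding to an edge on a terminal segment (a path from the root $[v]^2_{K_v}$ or a branching vertex to a leaf) of some canonical $(k+1)$-leaf spanning subtree $S \subseteq \Tree(K_v)$ realizing $\Leaf_{k+1}(K_v,v)=m$. Removing such a bridge splits $K_v$ into a main component $H_1$ containing $v$ with $\Leaf_k(H_1,v)=m_1$ and a side component $H_2$ rooted at the other endpoint $w_e$ of $e$ with $\Leaf_1(H_2, w_e)=m-m_1-1$, both equalities being forced by the maximality of $S$.

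For each possible realization $H$ of $K_v$ satisfying the event, I would first fix a canonical choice of $S(H)$ (e.g.\ the lexicographically minimal one in some fixed ordering of $E$), and observe that its set of last-branching bridges has size at least $k+1$, with at least one per leaf. Summing $\bP_G(K_v=H)$ over all pairs $(H,e)$ with $e$ a last-branching bridge for $S(H)$ then overcounts each realization by a factor of at least $k+1$, producing the $1/(k+1)$ in the recursion. For each such pair, I would use an exploration argument---BFS from $v$ revealing all edges incident to $V(H_1)$, followed by probing $e$---and invoke the spatial Markov property of Bernoulli percolation to factorize
\[
\bP_G(K_v=H) \;=\; p\cdot\bigl[p^{|E_o(H_1)|}(1-p)^{|A_1|+|C_0|}\bigr]\cdot\bigl[p^{|E_o(H_2)|}(1-p)^{|A_2|}\bigr],
\]
where $A_1,A_2$ are the non-cross boundary edges of $H_1,H_2$ in $G$ and $C_0$ is the set of additional cross-edges of $G$ between $V(H_1)$ and $V(H_2)$. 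Using the trivial bound $(1-p)^{|C_0|}\le 1$ and interpreting the two bracketed factors as Bernoulli-$p$ cluster probabilities in appropriately modified graphs, the sum over $(H_1,H_2)$ for fixed $e$ factors into two independent sums, each bounded above by the supremum quantities $Q_k(p,n_1,m_1)$ and $Q_1(p,n-n_1,m-m_1-1)$; summing over the position of $e$ among the $\le n$ edges of $E(K_v)$ contributes the factor $n$.

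Finally, to shift the parameter $p$ to $pe^{-1/n}$ inside the $Q_k$ factor, I would use the elementary estimate $p^{j}\le e\cdot(pe^{-1/n})^{j}$ valid whenever $j\le n$, applied to $j=|E_o(H_1)|\le n$; this contributes the factor $e$, while the residual factor of $2$ in $\frac{2en}{k+1}$ absorbs combinatorial slack (in the exact count of last-branching bridges and in the $(1-p)^{|C_0|}\le 1$ step). The main obstacle is the bookkeeping in the sum-to-product step: the modified graph in which $H_1$ or $H_2$ should be viewed as a percolation cluster depends on the pair $(H_1,H_2)$ through the cross-edge set $C_0$, which in principle threatens the factorization. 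What saves us is precisely that $Q_k$ and $Q_1$ are defined as suprema over all countable locally finite graphs, so that after revealing the main side via exploration one may apply $Q_1$ to the residual subgraph (effectively $G\setminus(E(K^{(1)})\cup\{e\})$) whatever its precise structure---this flexibility is built into the definition of $Q_k$ precisely to enable such inductive decompositions.
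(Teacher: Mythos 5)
Your structural decomposition is the right one and matches the paper's in spirit: split $K_v$ at a last-branching bridge $e$, use the exact additivity $\Leaf_k(H_1,v)+\Leaf_1(H_2,x)=\Leaf_{k+1}(K_v,v)-1$, factor via the spatial Markov property, and exploit that $Q_k,Q_1$ are suprema over all graphs so the residual graph's shape is irrelevant; your estimate $p^{j}\le e\,(pe^{-1/n})^{j}$ for $j\le n$ is also exactly the source of the factor $e$ (it is the deterministic shadow of the paper's bound $(p'/p)^{n-1}\ge e^{-1}$). But the purely deterministic double-counting does not deliver the stated constant $\tfrac{2en}{k+1}$ with parameter $p'=pe^{-1/n}$, and the place it fails is precisely the cost you wave into "the residual factor of $2$". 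In your accounting the marked bridge $e$ is \emph{open} in the configuration, while the quantity $Q_k(p',n_1,m_1)$ you want to compare against charges $(1-p')$ for \emph{every} edge of $\partial_G H_1$, including $e$. You can either (a) view $H_1$ as a cluster of the ambient graph $G$, in which case the missing factor costs $p/(1-p')$, which is of order $n$ when $p$ is close to $1$ (e.g.\ $p=1-n^{-2}$), not an absolute constant; or (b) view $H_1$ as a cluster of $G\setminus e$, in which case, since the same $H_1$ arises for up to $E^G(H_1)\le n$ different choices of $e$ and the bound $Q_k$ must be invoked once per choice, you pay a separate factor of order $n$ on top of the conversion. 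Either way you land at something like $\tfrac{Cn}{(1-p)(k+1)}$ or $\tfrac{Cn^2}{k+1}$ rather than $\tfrac{2en}{k+1}$: your single factor "$n$ from the position of $e$" cannot simultaneously pay for locating $e$ \emph{and} for closing it at the comparison parameter. The paper's missing idea is the two-parameter coupling (i.i.d.\ uniforms $U_e$, with $q$-open meaning $U_e\le q$) and the event $\sB$ that \emph{exactly one} $p$-open edge of $K_v(p)$ is $p'$-closed and lies in the set $L$ of last-branching bridges. This makes the events for different marked bridges disjoint (so no position-counting factor), makes $H_1$ an honest $p'$-cluster in $G$ with all of $\partial_G H_1$, including $e$, genuinely $p'$-closed (so no $1/(1-p)$ loss), and the only price is $\P(\sB\mid\sA)\ge (k+1)e^{-1}(1-e^{-1/n})\ge\tfrac{k+1}{2en}$, which is exactly the stated prefactor uniformly in $p$.

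Two smaller points. First, you assert that $\Leaf_k(H_1,v)=m_1$ and $\Leaf_1(H_2,w_e)=m-m_1-1$ are "forced by the maximality of $S$"; the inequality $\Leaf_k(H_1,v)\ge m-1-\Leaf_1(H_2,w_e)$ is indeed immediate from restricting $S$, but the reverse inequality is not automatic and requires comparing against an \emph{arbitrary} optimal $k$-leaf system in $H_1$ and re-attaching the branch to $u_{k+1}$ (the paper does this with the auxiliary distance $r$); as written this is an unproved step. Second, minor bookkeeping: with your splitting the edge counts satisfy $n_1+n_2=n-1$ (or the $H_1$-count is taken in $G\setminus e$ rather than $G$), so matching the indices $Q_k(\cdot,n_1,\cdot)Q_1(\cdot,n-n_1,\cdot)$ exactly requires fixing a convention, which the paper's event-based formulation handles automatically since there $E_v^G(p')=a$ counts the dropped edge as touched. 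A weaker recursion of the kind your argument does prove could probably still be pushed through the generating-function analysis with modified weights, but it does not prove the lemma as stated.
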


\begin{proof}[Proof of \cref{lem:recursion}]  Let $G=(V,E)$ be countable and locally finite and let $v \in V$.  Let $(U_e:e \in E) $ be i.i.d.\ Uniform$[0,1]$ random variables. We say that an edge $e $ is $q$-\emph{open} if $U_e \leq q $, and otherwise that it is $q$-\emph{closed}, so that the subgraph $\omega_q$ of $G$ spanned by the $q$-open edges is distributed as Bernoulli-$q$ bond percolation on $G$. 
 We write $K_v(q) =K_v^G(q)$ for the cluster of $v$ in $\omega_q$, write $E_v(q) = E^G_v(q) $ for the number of edges of $G$ that $K^G_v(q)$ touches, and write $T_v(q)=T_v^G(q)= \Tree(K^G_v(q))$.  Write $\P=\P^G$ for the law of the collection of random variables $(U_e : e \in E)$.  

Fix $p\in [0,1]$, $k\geq 1$, and $n,m\geq 1$, and consider the event $\sA=\{E^G_v(p)=n$,  $\Leaf_{k+1}(K^G_v(p),v)=m\}$. It suffices to prove that
\[
\bP^G_p(\sA)\leq \frac{2e n}{k+1}\sum_{m_1=1}^{m-1}\sum_{n_1=1}^{n} Q_{k}\left(pe^{-1/n},n_{1},m_{1} \right)Q_1(p,n-n_1,m-m_1-1).
\]
We may assume that $v$ has degree at least $1$, since the claim is trivial otherwise. 
Suppose that the event $\sA$ holds, and let $u_1,\ldots,u_{k+1}$ be a collection of leaves of $T_v^G(p)=\Tree(K_v^G(p))$ attaining the maximum in the definition of  $ \Leaf_{k+1}(K_v^G(p),v)$. (Note that this collection is not unique, but that the choice will not matter. In particular, we can and do choose $u_1,\ldots,u_{k+1}$ to be a measurable function of the cluster $K^G_v(p)$.) Let $S$ be the subtree of $T_v^G(p)$ spanned by the union of the geodesics connecting $u_1,\ldots,u_{k+1}$ and the root $o:=[v]^2_{K^G_v(p)}$.  
We say that an edge $e$ of $S$ is a \emph{last-branching edge} if deleting it from  $S$ results in two connected components $S_1,S_2$, where $S_1$ contains the root and the following conditions hold:
\begin{itemize}
\item
  $S_2$ is either a path or an isolated vertex;  
\item
The endpoint of $e$ that belongs to $S_1$ is either equal to $o$ or has degree at least three in $S$. \end{itemize}
Observe that every last-branching edge of $S$ is naturally associated both to a leaf of $S$ and to a bridge of $K^G_v(p)$, which we call a \emph{last-branching bridge}. In particular, we may enumerate the last-branching bridges of $K^G_v(p)$ by $e_1,\ldots,e_{k+1}$ in such a way  that 
for each $1 \leq i \leq k+1$ the geodesic from $o$ to $u_i$ in $S$ passes through the edge corresponding to $e_i$ but does not pass through the edge of $S$ corresponding to $e_j$ for any $j \neq i$. Write $L=\{e_1,\ldots,e_{k+1}\}$.


 As the form of the recursion \eqref{eq:prec} suggests, we will perform surgery to a random edge in $L$.   
Write $p':=pe^{-1/n}$, and consider the event 
\[\sB=\sA \cap \bigl\{\text{exactly one $p$-open edge $e$ in $K_v^G(p)$ is $p'$-closed, and this edge belongs to $L$} \bigr\}. \]
Since $|L|=k+1$ and there are at most $n$ $p$-open edges in $K_v^G(p)$ on the event $\sA$, we may compute that 
\[\P( \sB \mid \sA) \geq (k+1)\left(\frac{p'}{p}\right)^{n-1}\left(1-\frac{p'}{p}\right) =  (k+1)  e^{-(n-1)/n}  (1-e^{-1/n}) \geq \frac{k+1}{2en}, \] where we have used that $1-e^{-x} \geq \frac{x}{2} $ for every $x\in [0,1]$ in the final inequality, and hence that
\begin{equation}
\label{e:Bayes'}
\P(\sA) \leq \frac{2en}{k+1}\P(\sB).  
 \end{equation}

 Write $H_v=H^G_v(p')$ for the subgraph of $G$ spanned by those edges of $G$ that do \emph{not} touch $K_v^G(p')$. 
To bound $\P(\sB) $  we now argue that 
\begin{equation}
\label{e:Cab}
\sB = \bigcup_{b=1}^{m-1} \bigcup_{a=1}^n \sC_{a,b},
 \end{equation} where $\sC_{a,b}$ is the event that the following conditions all hold:
\begin{itemize}
\item[(i)] $E^G_v(p')=a$ and     $ \Leaf_{k}(K^G_v(p'),v)=b  $.
    
\item[(ii)]
There is exactly one $p$-open edge $e$ touched by $K^G_v(p')$ that is not $p' $-open, and this edge lies in the set $L$. In particular, this edge has an endpoint $x$ that does not lie in $K^G_v(p')$.  

\item[(iii)] The $p$-cluster $K^{H_v}_{x}(p)$  of $x$ in the graph $H_v$ touches $n-a$ edges of $H_v$.

\item[(iv)] 
Every $p$-open edge in the $p$-cluster $K^{H_v}_{x}(p)$ is also $p'$-open.

\item[(v)]  $\Leaf_{1}(K_{x}^{H_v}(p),x)=m-b-1 $. 

\end{itemize}
It may seem that the unions in \eqref{e:Cab} should begin at $b=0$, $a=0$ rather than $b=1$, $a=1$. What is written is correct, however: $E_v^G(p')$ is positive by the assumption that $v$ has degree at least $1$ in $G$, while $b=0$ would mean by (i) that $T_v^G(p')$ has fewer than $k$ leaves, which is incompatible with the conditions that (ii) holds and that $\Leaf_{k+1}(K_v^G(p))=m>0$.

The only other part of the claim \eqref{e:Cab} that merits  explanation is the implicit claim that if (ii) holds then \[\Leaf_{1}\bigl(K_{x}^{H_v}(p),x\bigr)+\Leaf_{k}\bigl(K^G_v(p'),v\bigr)=\Leaf_{k+1}\bigl(K^G_v(p),v\bigr)-1. \] 
Without loss of generality assume that  $e=e_{k+1} \in L $. 
 Consider the subgraph $S'$ of $T_v^G(p')$ spanned by the union of the geodesics between $u_1,\ldots,u_k$ and the root. Since $e=e_{k+1}$ is the last-branching bridge  associated to $u_{k+1}$, the tree $S'$ contains one of the endpoints of the edge corresponding to $e$ and we therefore observe that 
\[\Leaf_{k}\bigl(K_v(p'),v\bigr) \geq \#\{\text{edges of }S'\} = \Leaf_{k+1}\bigl(K_v(p),v\bigr)-1-\Leaf_{1}\bigl(K_{x}^{H_v}(p),x\bigr), \]
where the $-1$ term corresponds to the edge $e$ itself.  
On the other hand, suppose that   $v_1,\ldots,v_{k}$ are leaves of  $T_v^G(p')$ attaining the maximum in the definition of $\Leaf_{k}(K^G_v(p'),v)$
 and let $r \geq 0$ be the distance in $T_v^G(p')$ from $e$ to the subgraph of $T_v^G(p')$ spanned by the union of the geodesics between the leaves $v_1,\ldots,v_k$ and the root. 
Then considering the subgraph of $T_v^G(p)$ spanned by the union of the geodesics between the leaves $v_1,\ldots,v_k$, the leaf $u_{k+1}$, and the root yields that 
\begin{equation*}
\Leaf_{k+1}\bigl(K^G_v(p),v\bigr) \geq \Leaf_{1}\bigl(K_{x}^{H_v}(p),x\bigr)+\Leaf_{k}\bigl(K^G_v(p'),v\bigr)+1 +r 
   \end{equation*}
   (again, the $+1$ term corresponds to the edge $e$ itself) and hence that
   \begin{multline*}
\Leaf_{k}\bigl(K^G_v(p'),v\bigr) \leq \Leaf_{k+1}\bigl(K^G_v(p),v\bigr)-1-\Leaf_{1}\bigl(K_{x}^{H_v}(p),x\bigr) -r \\\leq \Leaf_{k+1}\bigl(K^G_v(p),v\bigr)-1-\Leaf_{1}\bigl(K_{x}^{H_v}(p),x\bigr)
   \end{multline*}
   as claimed.


It remains to estimate the probability of the event $\sC_{a,b}$.
Let $\sD_{a,b} \supseteq \sC_{a,b}$ be the simpler event that the following conditions hold:
\begin{itemize}
\item[(ib)] $E^G_v(p')=a$ and     $ \Leaf_{k}(K^G_v(p'),v)=b  $.
    
\item[(iib)]
There is exactly one $p$-open edge $e$ touched by $K^G_v(p')$ that is not $p' $-open, and this edge has an endpoint $x$ that does not lie in $K^G_v(p')$.  

\item[(iiib)] The $p$-cluster $K^{H_v}_{x}(p)$  of $x$ in the graph $H_v$ touches $n-a$ edges of $H_v$.


\item[(vb)]  $\Leaf_{1}(K_{x}^{H_v}(p),x)=m-b-1 $. 

\end{itemize}
By definition, the probability that the condition (ib) holds is at most $Q_k(p',a,b)$. On the other hand, the conditional probability that (iiib) and (vb) hold given both that (ib) and (iib) hold and given the cluster $K^G_v(p')$ and the edge $e$ is equal to
\[
\P^{H_v}\Bigl(E_x^{H_v}(p) = n-a \text{ and } \Leaf_{1}\bigl(K_{x}^{H_v}(p),x\bigr)=m-b-1\Bigr)
\]
which is at most $Q_1(p,n-a,m-b-1)$ since $Q_1$ was defined by taking a supremum over all graphs. Thus, we have that
\[\P^G(\sC_{a,b}) \leq \P^G(\sD_{a,b}) \leq Q_k(p',a,b)Q_1(p,n-a,m-b-1)\]
for every $1 \leq a \leq n$ and $1\leq b \leq m-1$. Since $G$ was arbitrary, the claim now follows from \eqref{e:Bayes'} and \eqref{e:Cab}.
\qedhere
\end{proof}

We now apply \cref{lem:SkinnyRadius,lem:recursion} to prove \cref{prop:Skinny_Br} via a generating function analysis.

\begin{proof}[Proof of \cref{prop:Skinny_Br}]
In order to analyze the inductive inequality of \cref{lem:recursion}, we introduce for each $p \in [0,1]$ and $k\geq 1$ the function $\sG_{p,k}:\R^2 \to [0,\infty]$ 
given by
\[
\sG_{p,k}(s,t) = \sum_{n =1}^\infty \sum_{m=1}^\infty  \frac{e^{sn+tm}}{n^{k-1}} \sup_{q \leq p} Q_k(q,n,m), 
\]
which is a sort of multivariate generating function. 
 \cref{lem:recursion} implies the inductive inequality
\begin{align*}
\sG_{p,k+1}(s,t) 
&\leq \frac{2e}{k+1} \sum_{n=1}^\infty \sum_{m=1}^\infty \sum_{n_1 =1}^n \sum_{m_1=1}^{m-1} 
 \frac{e^{sn+tm}}{n^{k-1}} \sup_{q \leq p} \left[Q_{k}(q e^{-1/n},n_1,m_1) Q_{1}(q,n-n_1,m-m_1-1) \right]\nonumber\\
 &\leq \frac{2e}{k+1} \sum_{n=1}^\infty\sum_{m=1}^\infty \sum_{n_1 =1}^n \sum_{m_1=1}^{m-1} 
 \frac{e^{sn+tm}}{n_1^{k-1}} \sup_{q \leq p} Q_{k}(q,n_1,m_1) \sup_{q \leq p} Q_{1}(q,n-n_1,m-m_1-1), \nonumber
  \end{align*}
  and using the change of variables $n_2=n-n_1$ and $m_2=m-m_1-1$ yields that
 \begin{align}
 \sG_{p,k+1}(s,t) 
&\leq \frac{2e^{t+1}}{k+1}  \sum_{n_1 =1}^\infty \sum_{m_1=1}^\infty 
 \frac{e^{sn_1+tm_1}}{n_1^{k-1}} \sup_{q \leq p} Q_{k}(q,n_1,m_1) \sum_{n_2=0}^\infty \sum_{m_2=0}^\infty e^{sn_2+tm_2} \sup_{q \leq p} Q_{1}(q,n_2,m_2) \nonumber\\
 &=\frac{2e^{t+1}}{k+1} \sG_{p,k}(s,t) \sum_{n_2=0}^\infty \sum_{m_2=0}^\infty e^{sn_2+tm_2} \sup_{q \leq p} Q_{1}(q,n_2,m_2)
 \label{eq:GF_recursion}
\end{align}
for every $k \geq 1$, $p\in [0,1]$, and $s,t \in \R$.
 (Note that both sides of this inequality could be equal to $+\infty$, but this will not cause us any problems.) 
On the other hand, \cref{lem:SkinnyRadius} implies that
\begin{align}Q_1(p,n,m) &\leq \sup\Bigl\{ \bP^G_p\bigl(E_{v} = n, R_v \geq m\bigr) : G=(V,E) \text{ countable, locally finite, $v\in V$} \Bigr\}
\nonumber
\\
&\leq \exp\left[-\frac{1}{2}(1-p)^{4n/m}m\right]
\label{eq:Skinny_Q}
\end{align}
for every $p\in [0,1)$ and $n,m \geq 0$, and since the right hand side of \eqref{eq:Skinny_Q} is increasing in $p$ it follows that
\[
\sum_{n =0}^\infty \sum_{m=0}^\infty  e^{sn+tm} \sup_{q \leq p} Q_1(q,n,m) \leq \sum_{n =0}^\infty \sum_{m=0}^\infty \exp\left[sn+tm-\frac{1}{2}(1-p)^{4n/m}m\right]
\]
for every $p\in [0,1]$ and $s,t\in \R$. 
If $0\leq p <1$, $0 < \alpha \leq 1$, and $0 < 4t \leq (1-p)^{12/\alpha}$ then separate consideration of the two cases $3m \leq \alpha n$ and $3m \geq \alpha n$ yields that
\[
-\alpha tn + tm - \frac{1}{2}(1-p)^{4n/m} m \leq -\frac{1}{2}\alpha tn - \frac{1}{2} tm
\]
for every $n,m \geq 0$, and hence that
%
\begin{multline}
\sG_{p,1}(-\alpha t,t) \leq\sum_{n =0}^\infty \sum_{m=0}^\infty  e^{sn+tm} \sup_{q \leq p} Q_1(q,n,m) \leq \sum_{n =0}^\infty \sum_{m=0}^\infty \exp\left[-\frac{1}{2}\alpha t n- \frac{1}{2}tm\right] \\= \frac{e^{\alpha t/2}}{e^{\alpha t/2}-1} \frac{e^{t/2}}{e^{t/2}-1} \leq \frac{16}{\alpha t^2}
\label{eq:G1}
\end{multline}
for every $0\leq p < 1$, $0 <\alpha \leq 1$, and $0 < 4t \leq (1-p)^{8/\alpha} \leq 1$, where we used that $e^x/(e^x-1) \leq 2/x$ for every $x>0$. Substituting \eqref{eq:G1} into \eqref{eq:GF_recursion} and inducting over $k\geq 1$ yields that
\begin{equation}
\label{eq:generating_function_final}
\sG_{p,k}(-\alpha t,t) \leq \frac{1}{k!} \left(\frac{32e^{2}}{\alpha t^2}\right)^k
\end{equation}
for every $k\geq 1$, $0\leq p < 1$, $0 <\alpha \leq 1$, and $0 < 4t \leq (1-p)^{12/\alpha} \leq 1$.

We now apply \eqref{eq:generating_function_final} to conclude the proof. Let $G=(V,E)$ be a countable, locally finite graph and let $v\in V$. Note that
\[
\bE^G_p\left[E_v^{k+1}\mathbbm{1}\left(2\alpha E_v \leq \Leaf_\ell(K_v,v) <\infty \right) \right] \leq \frac{(k+\ell)!}{(\alpha t)^{k+\ell}} \bE^G_p\left[E_v^{-\ell+1} e^{\alpha t E_v} \mathbbm{1}\left(2\alpha E_v \leq \Leaf_\ell(K_v,v) <\infty \right) \right]
\]
for every $k\geq \ell \geq 1$, $0\leq p < 1$, and $\alpha,t>0$. Since \[e^{\alpha t E_v} \mathbbm{1}\left(2\alpha E_v \leq \Leaf_\ell(K_v,v) <\infty \right) \leq e^{-\alpha t E_v + t\Leaf_\ell(K_v,v) }\mathbbm{1}(E_v<\infty)\] for each $\alpha,t>0$ and $\ell \geq 1$, it follows that 
\begin{align}
\bE^G_p\left[E_v^{k+1}\mathbbm{1}\left(2\alpha E_v \leq \Leaf_\ell(K_v,v) <\infty \right) \right]
&\leq
\frac{(k+\ell)!}{(\alpha t)^{k+\ell}} \bE^G_p\left[ E_v^{-\ell+1} e^{-\alpha t E_v + t\Leaf_\ell(K_v,v) } \mathbbm{1}(E_v<\infty)\right]
\nonumber 
\\
&\leq \frac{(k+\ell)!}{(\alpha t)^{k+\ell}}\sG_{p,\ell}(-\alpha t, t) \leq \frac{(k+\ell)!}{\ell!} \left(\frac{32 e^{2}}{\alpha^2 t^3}\right)^\ell \frac{1}{(\alpha t)^k}
\end{align}
for every $k\geq \ell \geq 1$, $0\leq p < 1$, $0 <\alpha \leq 1$, and $0 < 4t \leq (1-p)^{12/\alpha} \leq 1$, where we used \eqref{eq:generating_function_final} in the final inequality. Using that $\Bridges_k(K_v,v) \leq E_v$, we deduce that
\begin{multline}
\sum_{k=1}^\infty \frac{\lambda ^k}{k! }\bE^G_p\left[ \Bridges_k(K_v,v) E_v^k \mathbbm{1}\left(2\alpha E_v \leq \Bridges_k(K_v,v) <\infty \right)\right] \\
\leq
\sum_{k=1}^\infty \frac{\lambda ^k}{k! } \sum_{\ell=1}^k \bE^G_p\left[ E_v^{k+1} \mathbbm{1}\left(2\alpha E_v \leq \Leaf_\ell(K_v,v) <\infty \right)\right] 
\\\leq \sum_{k=1}^\infty \sum_{\ell=1}^k\binom{k+\ell}{\ell} \left(\frac{32 e^{2}}{\alpha^2 t^3}\right)^\ell \left(\frac{\lambda}{\alpha t}\right)^k 
\end{multline}
for every $\lambda >0$, $0\leq p < 1$, $0 <\alpha \leq 1$, and $0 < 4t \leq (1-p)^{12/\alpha} \leq 1$. It follows in particular that
\begin{multline}
\label{eq:generating_final}
\sum_{k=1}^\infty \frac{\lambda ^k}{k! }\bE^G_p\left[ \Bridges_k(K_v,v) E_v^k \mathbbm{1}\left(2\alpha E_v \leq \Bridges_k(K_v,v) <\infty \right)\right] \\\leq \sum_{k=1}^\infty \sum_{\ell=1}^k\binom{2k}{\ell} \left(\frac{32 e^{2}}{\alpha^2 t^3}\right)^k \left(\frac{\lambda}{\alpha t}\right)^k 
\leq \sum_{k=1}^\infty  \left(\frac{128 e^{2} \lambda}{\alpha^3 t^4}\right)^k 
\end{multline}
for every $\lambda >0$, $0\leq p < 1$, $0 <\alpha \leq 1$, and $0 < 4t \leq (1-p)^{12/\alpha} \leq 1$.
(A similar computation yields \eqref{eq:bridges_preview}.) Taking $t=(1-p)^{12/\alpha}/4$, it follows that for each $0<\alpha \leq 1$ and $0 \leq p < 1$ there exists 
\[\lambda(\alpha,p)= \frac{\alpha^3(1-p)^{48/\alpha}}{2^{16}e^2}>0\] such that 
\begin{equation}
\label{eq:generating_final}
\sum_{k=1}^\infty \frac{\lambda(\alpha,p) ^k}{k! }\bE^G_p\left[ \Bridges_k(K_v,v) E_v^k \mathbbm{1}\left(2\alpha E_v \leq \Bridges_k(K_v,v) <\infty \right)\right] \leq \sum_{k=1}^\infty 2^{-k} = 1
\end{equation}
for every $0\leq p < 1$ and $0 <\alpha \leq 1$. This clearly implies the claim.
\end{proof}

\section{Proofs of corollaries}
\label{sec:corollaries}

In this section we apply \cref{thm:main} to deduce \cref{cor:dichotomy,cor:analyticity,cor:anchored_expansion,cor:random_walk}. 

\subsection{Analyticity}

In this section, we apply the following proposition to deduce \cref{cor:analyticity} from \cref{thm:main}.

\begin{prop}
\label{prop:analytic}
Let $G$ be a connected, locally finite graph, let $v \in V$ and let $F:\sH_v \to \C$ have subexponential growth. 
Then 
$\bE_p\left[ F(K_v) \mathbbm{1}(|K_v| < \infty) \right]$ 
is an analytic function of $p$ on $\{p \in (0,1) : \zeta(p) > 0\}$.
\end{prop}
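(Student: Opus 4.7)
The plan is to follow the classical argument attributed to Kesten, as hinted in the excerpt: expand the expectation as a power series indexed by finite subgraphs, verify that each term is entire in the complex parameter, and use the Weierstrass $M$-test to upgrade this to analyticity of the sum, with the $M$-test bound coming from exponential decay of $\zeta(p_0)$ traded off against subexponential growth of $F$.

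Fix $p_0 \in (0,1)$ with $\zeta(p_0) > 0$. For $z \in \C$, define
\[
\Phi(z) := \sum_{H \in \sH_v} F(H) \, z^{|E_o(H)|} (1-z)^{|\partial H|},
\]
so that $\Phi(p) = \bE_p[F(K_v)\mathbbm{1}(|K_v|<\infty)]$ for $p \in [0,1]$. Each summand is a polynomial in $z$ and hence entire. It suffices to show that for a sufficiently small open disk $D$ around $p_0$, the series converges absolutely and uniformly on $D$; the Weierstrass $M$-test then gives that $\Phi$ is holomorphic on $D$, whose intersection with $(0,1)$ contains a neighborhood of $p_0$ in the real line.

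The main (and essentially only) work is the termwise bound. For $z = p_0 + w$ with $|w|$ small, write
\[
|z|^{|E_o(H)|} |1-z|^{|\partial H|} \le p_0^{|E_o(H)|}(1-p_0)^{|\partial H|} \cdot K(w)^{|E(H)|},
\]
where $K(w) := \max\!\bigl(1 + |w|/p_0,\, 1 + |w|/(1-p_0)\bigr)$; this uses $|E_o(H)|, |\partial H| \le |E(H)|$. Given $\eps > 0$, choose $|w| \le r$ small enough that $\log K(w) \le \eps$ for $|w| \le r$. The subexponential-growth hypothesis gives $C_\eps > 0$ with $|F(H)| \le C_\eps e^{\eps |E(H)|}$ for every $H \in \sH_v$. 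Summing termwise and grouping by $n = |E(H)|$,
\[
\sum_{H \in \sH_v} |F(H)| \, p_0^{|E_o(H)|}(1-p_0)^{|\partial H|} K(w)^{|E(H)|}
\le C_\eps \sum_{n \ge 1} e^{2\eps n} \, \bP_{p_0}\bigl(n \le |E(K_v)| < \infty\bigr).
\]
By the definition of $\zeta(p_0)$, the right-hand side is finite as soon as $2\eps < \zeta(p_0)$; choose $\eps$ this small and then $r$ accordingly. This bound is uniform in $z$ on the disk $\{|z - p_0| \le r\}$, so the $M$-test applies and $\Phi$ is analytic on that disk. Since $p_0$ was arbitrary in $\{p \in (0,1) : \zeta(p) > 0\}$, the proposition follows.

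I do not expect a real obstacle here: the one place to be mildly careful is making sure the perturbation factor $K(w)^{|E(H)|}$ only eats an exponential rate strictly less than $\zeta(p_0)$, which is why we need the slack provided by both the subexponential (not exponential) growth of $F$ and the strict positivity of $\zeta(p_0)$. The result applies to $\bE_p[F(K_v)\mathbbm{1}(|K_v|<\infty)]$ rather than the full expectation because the truncation to finite clusters is exactly what makes the series $\sum_H$ an honest sum over $\sH_v$.
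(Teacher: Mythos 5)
Your proposal is correct and follows essentially the same route as the paper: expand $\bE_p[F(K_v)\mathbbm{1}(|K_v|<\infty)]$ as $\sum_{H\in\sH_v}F(H)p^{|E_o(H)|}(1-p)^{|\partial H|}$, bound the complex perturbation by a factor exponential in $|E(H)|$ with small rate, and apply the Weierstrass $M$-test (plus Morera/Weierstrass convergence) using the slack between the subexponential growth of $F$ and the strictly positive decay rate $\zeta(p_0)$. The only differences are cosmetic: the paper uses the single factor $(1+|h|/(p_0(1-p_0)))^{|E(H)|}$ and keeps the bound as an expectation under $\bP_{p_0}$, whereas you split it via $K(w)$ and explicit $e^{\eps n}$ terms, noting in passing that $|E_o(H)|+|\partial H|=|E(H)|$ is what gives $K(w)^{|E(H)|}$ rather than $K(w)^{2|E(H)|}$.
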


Although the statement is more general, the \emph{proof} of this proposition is essentially identical to the original argument of Kesten \cite[Pages 250-251]{MR692943}.
Many further results on analyticity can be found in the works of Georgakopoulos and Panagiotis \cite{georgakopoulos2018analyticity,georgakopoulos2020analyticity}. (Indeed, Corollary 4.14 of \cite{georgakopoulos2018analyticity} can be seen to imply \cref{prop:analytic}, but we prefer to give a direct proof since that result is rather technical.)

\begin{proof}
Let $F:\sH_v \to \C$ have subexponential growth. We wish to show that for each $p_0 \in \{p \in (0,1) : \zeta(p) >0\}$ there exists $\eps>0$ and an analytic function $f : B(p_0,\eps) \to \C$ such that
\[
f(p) = \bE_p\left[ F(K_v) \mathbbm{1}(|K_v|<\infty)\right] = \sum_{H \in \sH_v} F(H) p^{|E_o(H)|}(1-p)^{|\partial H|}
\]
for every $p\in (p_0-\eps,p_0+\eps)$. 
By Morera's Theorem, it suffices to prove that for every $p_0 \in \{p \in (0,1) : \zeta(p) >0\}$ there exists $\eps>0$ such that the series $\sum_{H \in \sH_v} F(H) q^{|E_o(H)|}(1-q)^{|\partial H|}$ converges uniformly in $q \in B(p_0,\eps)$. To prove this, it suffices by the Weierstrass $M$-test to prove that for each $p_0 \in \{p \in (0,1) : \zeta(p) >0\}$ there exists $\eps>0$ such that
\begin{equation}
\label{eq:Mtest}
\sum_{H \in \sH_v} \sup_{|h| \leq \eps} \left|F(H) (p_0+h)^{|E_o(H)|}(1-p_0-h)^{|\partial H|}\right| < \infty.
\end{equation}
We stress that the supremum in this expression is taken over \emph{complex} $h$  with $|h|\leq \eps$. To prove \cref{eq:Mtest}, we observe that
\begin{multline*}
\left|F(H) (p_0+h)^{|E_o(H)|}(1-p_0-h)^{|\partial H|}\right|\\ = |F(H)| p_0^{|E_o(H)|}(1-p_0)^{|\partial H|}\left|\frac{p_0+h}{p_0}\right|^{|E_o(H)|}\left|\frac{1-p_0-h}{1-p_0}\right|^{|\partial H|}\\
\leq |F(H)| p_0^{|E_o(H)|}(1-p_0)^{|\partial H|}\left(1+\frac{|h|}{p_0(1-p_0)}\right)^{|E(H)|}
\end{multline*}
for every $p\in (0,1)$ and $h \in \C$, and hence that
\[
\sum_{H \in \sH_v} \sup_{|h| \leq \eps} \left|F(H) (p_0+h)^{|E_o(H)|}(1-p_0-h)^{|\partial H|}\right| 
\leq \bE_{p_0}\left[ |F(K_v)| \left(1+\frac{\eps}{p_0(1-p_0)}\right)^{|E(H)|} \right]
\]
for every $p_0 \in (0,1)$ and $\eps>0$. The right hand side is finite if $\eps < p_0(1-p_0) (e^{\zeta(p_0)}-1)$, concluding the proof.
\end{proof}

\begin{proof}[Proof of \cref{cor:analyticity}]
This is immediate from \cref{thm:main} and \cref{prop:analytic}.
\end{proof}

\begin{remark}
One can strengthen the conclusions of \cref{prop:analytic} to give analyticity at $p=0$ and $p=1$ using the methods of \cite{georgakopoulos2018analyticity}.
\end{remark}

\subsection{Anchored expansion}

The following proposition, which allows us to deduce \cref{cor:anchored_expansion} from \cref{thm:main}, is implicit in the proof of \cite[Theorem A.1]{chen2004anchored}. We give a proof both for completeness and to stress that the argument does \emph{not} require any isoperimetric assumptions on the ambient graph $G$.

\begin{prop}
\label{prop:anchored_explicit}
Let $G=(V,E)$ be a connected, locally finite graph. Let $p_c<p < 1$ and suppose that $\zeta(p)>0$. Then every infinite cluster $K$ of Bernoulli-$p$ bond percolation on $G$ has anchored expansion with 
anchored Cheeger constant
\[
\Phi^*_E(K) \geq \frac{1}{2}\alpha(p):= \frac{1}{2}\sup\left\{ \alpha \in [0,p] : \alpha^{-\alpha}(1-\alpha)^{-(1-\alpha)} \left[\frac{p}{1-p}\right]^{\alpha} < e^{\zeta(p)}  \right\} > 0
\]
almost surely.
\end{prop}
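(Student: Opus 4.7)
The plan is to adapt the Peierls-type Borel--Cantelli argument of Chen--Peres--Pete. Fix $\alpha \in (0,p)$ with $\phi(\alpha) := \alpha^{-\alpha}(1-\alpha)^{-(1-\alpha)}(p/(1-p))^\alpha < e^{\zeta(p)}$; since $\phi(0)=1 < e^{\zeta(p)}$ whenever $\zeta(p)>0$, the set of admissible $\alpha$ is nonempty, which will give $\alpha(p)>0$. The aim is to show that almost surely on $\{|K_v|=\infty\}$, only finitely many finite connected subsets $S \subseteq V(K_v)$ containing $v$ have anchored Cheeger ratio $|\partial_E^{K_v} S|/\sum_{u \in S}\deg_{K_v}(u)$ strictly less than $\alpha/2$; supping over admissible $\alpha$ then gives $\Phi^*_E(K_v) \geq \alpha(p)/2$.

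The heart of the argument is a single identity: for any finite connected subgraph $H$ of $G$ containing $v$ and any $0 \leq j \leq Y_H := |\partial_E V(H)|$,
\[
\bP_p\bigl(K_v[V(H)] = H,\ |\partial_E^{K_v} V(H)| = j\bigr) = \bP_p(K_v = H) \cdot \binom{Y_H}{j} \left(\frac{p}{1-p}\right)^j,
\]
which follows because the edges inside $V(H)$ are independent of the vertex-boundary edges, combined with $\bP_p(K_v = H) = p^{|E_o(H)|}(1-p)^{|\partial H|}$. The bad Cheeger event forces $j$ to be small relative to $m_H := |E_o(H)|$. Using $Y_H \leq |E(H)|$, hence $\binom{Y_H}{j} \leq \binom{|E(H)|}{j}$, together with the standard entropy estimate
\[
\sum_{k \leq x N}\binom{N}{k}\left(\frac{p}{1-p}\right)^k \leq \operatorname{poly}(N)\cdot \phi(x)^N \qquad (x < p),
\]
the combinatorial factor is bounded by $\operatorname{poly}(|E(H)|)\cdot \phi(\alpha)^{|E(H)|}$.

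Summing over $H$ with $|E(H)|=N$ and using $\bP_p(|E(K_v)|=N,\,|K_v|<\infty) \leq e^{-\zeta(p)N+o(N)}$ (which is immediate from the definition of $\zeta(p)$), the expected number of bad $S$ with $|E(K_v[S])|$ of order $N$ is bounded by $\operatorname{poly}(N)\cdot\bigl(\phi(\alpha)/e^{\zeta(p)}\bigr)^N$. This is summable in $N$ precisely when $\phi(\alpha) < e^{\zeta(p)}$, so Borel--Cantelli yields that almost surely only finitely many bad $S$ exist, which concludes the argument.

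The main technical obstacle will be the careful bookkeeping of constants needed to produce exactly the factor $\alpha(p)/2$ in the Cheeger bound. Since $\sum_u \deg_{K_v}(u) = 2m_H + j$, the Cheeger bad event is $j < 2\alpha m_H/(2-\alpha)$ rather than the cleaner $j < \alpha m_H$; and the binomial $\binom{Y_H}{j}(p/(1-p))^j$ is most naturally related to $\phi$ evaluated at $j/|E(H)|$ via $\binom{Y_H}{j} \leq \binom{|E(H)|}{j}$. Threading these two relationships together via a careful auxiliary parameterization, though routine, is what produces the stated constant with no essentially new idea beyond the combinatorial lemma and the exponential decay.
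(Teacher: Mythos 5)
Your proposal is correct and is essentially the paper's own argument (itself adapted from Chen--Peres--Pete): the same factorization $\bP_p(K_v[V(H)]=H,\,|\partial_E^{K_v}V(H)|=j)=\bP_p(K_v=H)\binom{Y_H}{j}(p/(1-p))^j$, the same binomial/entropy bound, the same collapse of the sum over $H$ with $|E(H)|=N$ into $\bP_p(E_v=N,\,E_v<\infty)\leq e^{-\zeta(p)N+o(N)}$, and a first-moment/Borel--Cantelli conclusion. The bookkeeping you defer does go through: since $\sum_{u\in S}\deg_{K_v}(u)=2m_H+j\leq 2|E(H)|-j$, a bad set forces $j<\tfrac{2\alpha}{2+\alpha}|E(H)|$, and reparametrizing yields the stated constant $\tfrac{1}{2}\alpha(p)$ (in fact the slightly stronger $\alpha(p)/(2-\alpha(p))$).
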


Before beginning the proof, let us note that if $G$ is a connected locally finite graph then
\begin{multline}
\label{eq:anchoored_Cheeger_alternate}
\Phi^*_E(G):=\lim_{n\to\infty}\inf\left\{\frac{|\partial_E K|}{\sum_{u\in K} \deg(u)} : K \subseteq V \text{ connected},\, v\in K,\, \text{ and } n\leq |K|<\infty\right\}
\\\geq 
\lim_{n\to\infty}\inf\left\{\frac{|\partial H|}{2 |E(H)|} : H \text{ a connected subgraph of $G$ containing $v$ with } n\leq |E(H)|<\infty\right\}
\end{multline}
for each vertex $v$ of $G$. Indeed, if $K$ is any finite set of vertices in $G$ and $H$ is the subgraph of $G$ induced by $K$ then $\partial H=\partial_E K$ and $\sum_{u \in K} \deg(u) \leq 2|E(H)|$, so that $|\partial_E K|/\sum_{u \in K} \deg(u) \geq |\partial H| / (2|E(H)|)$.

\begin{proof}
Fix a vertex $v$ of $G$. 
For each $n\geq 1$, let $\sH_v^n$ be the set of connected subgraphs of $G$ containing $v$ that touch exactly $n$ edges. 
Given $\omega$, for each $H \in \sH_v$, let $\partial_\omega H = \partial H \cap \{e \in E: \omega(e) =1\}$ be the set of open edges in $\partial H$, where we recall that $\partial H = E(H) \setminus E_o(H)$ is the set of edges that touch but do not belong to $H$. For each $n,m\geq 1$, we define $\sA_{n,m}$ to be the event that there exists a subgraph $H \in \sH_v^n$ such that $H\subseteq K_v$ (equivalently, every edge of $H$ is open in $\omega$) and $|\partial_\omega H| = m$. 
By \eqref{eq:anchoored_Cheeger_alternate}, the anchored Cheeger constant satisfies $\Phi^*_E(K_v) \geq \alpha/2$ on the event $\{|K_v|=\infty\} \setminus (\cap_{N\geq 1}\cup_{n \geq N} \cup_{m \leq \alpha n} \sA_{n,m})$. Thus, to prove the claim it suffices to prove that if $\zeta(p)>0$ then 
\begin{equation}
\label{eq:ExpansionUnionBound}
\sum_{n=1}^\infty \sum_{m=1}^{\lfloor \alpha n \rfloor} \sum_{H \in \sH_v^n} \bP_p\left(H \subseteq K_v,\, |\partial_\omega H| =m\right) < \infty 
\end{equation}
for every $\alpha<\alpha(p)$, since Markov's inequality will then imply that 
$
\lim_{N\to\infty}\bP_p\left( \cup_{n \geq N} \cup_{m \leq \alpha n} \sA_{n,m}\right) =0$ for every $\alpha<\alpha(p)$ as desired. To prove \eqref{eq:ExpansionUnionBound}, first observe that if $H\in \sH_v$ and $S \subseteq \partial H$ then
\[
\bP_p(H \subseteq K_v \text{ and } \partial_\omega H  = S) 
=\bP_p( E_o(H) \cup S \subseteq \omega, (\partial H \setminus S) \cap \omega = \emptyset)
= p^{|E_o(H)|+|S|}(1-p)^{|\partial H|-|S|}.
\]
Summing over the possible choices of $S$ with $S \subseteq E(H)$ and $|S|=m$, we deduce that if $H \in \sH_v^n$ then
\[
\bP_p(H \subseteq K_v \text{ and } |\partial_\omega H | =m) = 
\binom{|\partial H|}{m} \left[\frac{p}{1-p}\right]^m \bP_p(K_v=H) \leq
\binom{n}{m} \left[\frac{p}{1-p}\right]^m \bP_p(K_v=H)
\]
and hence that
\begin{equation}
\label{eq:ExpansionUnionBound2}
\sum_{n=1}^\infty \sum_{m=1}^{\lfloor \alpha n \rfloor} \sum_{H \in \sH_v^n} \bP_p\left(H \subseteq K_v,\, |\partial_\omega H| =m\right) \leq \bE_p\left[ \sum_{m=1}^{\lfloor \alpha E_v \rfloor}\binom{E_v }{m} \left(\frac{p}{1-p}\right)^m\mathbbm{1}(E_v < \infty)\right].
\end{equation}
To conclude, simply note that if $0<\alpha  \leq p$ then $\left(\frac{p}{1-p}\right)^m \left(\frac{\alpha}{1-\alpha}\right)^{-m} $ is increasing in $m$, so that
\begin{align*}
\sum_{m=1}^{\lfloor \alpha n \rfloor} \binom{n}{m} \left(\frac{p}{1-p}\right)^m 
&\leq \left(\frac{p}{1-p}\right)^{\alpha n} \left(\frac{\alpha}{1-\alpha}\right)^{-\alpha n} \sum_{m=1}^{\lfloor \alpha n \rfloor} \binom{n}{m} \left(\frac{\alpha}{1-\alpha}\right)^m\\
&\leq \left(\frac{p}{1-p}\right)^{\alpha n} \left(\frac{\alpha}{1-\alpha}\right)^{-\alpha n} \sum_{m=0}^{n} \binom{n}{m} \left(\frac{\alpha}{1-\alpha}\right)^m\\
&= 
\left(\frac{p}{1-p}\right)^{\alpha n} \left(\frac{\alpha}{1-\alpha}\right)^{-\alpha n} \left(1+\frac{\alpha}{1-\alpha}\right)^n =
 \alpha^{-\alpha n} (1-\alpha)^{-(1-\alpha)n}\left(\frac{p}{1-p}\right)^{\alpha n}.
\end{align*}
If $0<\alpha<\alpha(p) \leq p$ then $\alpha^{-\alpha}(1-\alpha)^{-(1-\alpha)}(p/(1-p))^{\alpha}<e^{\zeta(p)}$ by definition of $\alpha(p)$, and it follows that the right hand side of \eqref{eq:ExpansionUnionBound2} is finite for $0<\alpha<\alpha(p)$ as claimed.
\end{proof}


\begin{remark}
When $G$ is transitive, it is a consequence of indistinguishability \cite{LS99,HPS99} that for each $p_c<p \leq 1$ there exists a deterministic constant $\phi^*_E(p)$ such that every infinite cluster has anchored Cheeger constant equal to $\phi^*_E(p)$ almost surely.
\end{remark}

We now turn to \cref{cor:dichotomy}. It is a result of H\"aggstr\"om, Schonmann, and Steif \cite{MR1797305} (see also \cite[Corollary 8.38]{LP:book}) that if $G$ is an amenable transitive graph and $\omega$ is an automorphism-invariant percolation process on $G$, then every cluster $K$ of $\omega$ has $\Phi_E^*(K)=0$ almost surely. Thus, as discussed in the introduction, \cref{prop:anchored_explicit} has the following corollary.

\begin{corollary}
\label{cor:amenable}
Let $G=(V,E)$ be a connected, locally finite, transitive graph. If $G$ is amenable then $\zeta(p)=0$ for every $p_c \leq p < 1$.
\end{corollary}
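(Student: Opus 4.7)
The plan is to argue by contradiction: assume $\zeta(p) > 0$ for some $p_c \leq p < 1$, and derive a contradiction using \cref{prop:anchored_explicit} together with the Häggström--Schonmann--Steif theorem cited immediately before the corollary.

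I will first dispatch the bulk range $p_c < p < 1$, where the argument is essentially immediate. Here $\theta(p) > 0$, so Bernoulli-$p$ bond percolation has infinite clusters almost surely. \cref{prop:anchored_explicit} then forces every such cluster to have anchored Cheeger constant at least $\alpha(p)/2 > 0$ almost surely. On the other hand, Bernoulli percolation is automorphism-invariant and $G$ is amenable and transitive, so the Häggström--Schonmann--Steif result asserts that every cluster of $\omega$ has anchored Cheeger constant zero almost surely, producing the contradiction and giving $\zeta(p) = 0$.

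The endpoint $p = p_c$ requires a small extra argument, and this is where I expect the main subtlety to lie, since \cref{prop:anchored_explicit} as stated assumes $p > p_c$ and, more importantly, the infinite-cluster argument is vacuous when $\theta(p_c) = 0$. I would handle this by a case analysis. If $\theta(p_c) > 0$, the proof just given applies with only cosmetic changes---inspection of the proof of \cref{prop:anchored_explicit} shows the hypothesis $p > p_c$ there is used only to guarantee that infinite clusters exist, which is supplied here by $\theta(p_c) > 0$. The remaining case $\theta(p_c) = 0$ is handled by invoking sharpness: the Aizenman--Barsky mean-field inequality gives $\chi(p) \to \infty$ as $p \uparrow p_c$, and combining this with the lower semicontinuity of the susceptibility $\chi(p) = \sum_{n \geq 1} \bP_p(|K_v| \geq n)$ (each summand is a monotone limit of polynomials in $p$) yields $\chi(p_c) = \infty$. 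When $\theta(p_c) = 0$ this means $\bE_{p_c}[|K_v|\mathbbm{1}(|K_v|<\infty)] = \infty$, which directly contradicts $\zeta(p_c) > 0$ (which would force an exponential tail on $|K_v|\mathbbm{1}(|K_v|<\infty)$ and hence finite expectation).

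The clean $p > p_c$ portion accounts for essentially all of the content; the only genuine obstacle is dealing with the boundary case $p = p_c$ when the phase transition could in principle be discontinuous, and I would resolve this by the short sharpness detour above rather than by any refinement of the main isoperimetric ingredients.
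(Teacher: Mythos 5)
Your argument for $p_c<p<1$ --- deriving a contradiction between \cref{prop:anchored_explicit} and the H\"aggstr\"om--Schonmann--Steif theorem for automorphism-invariant percolation on amenable transitive graphs --- is exactly the paper's proof, which consists of nothing more than that observation. Your extra case analysis at the endpoint $p=p_c$ (noting that the proof of \cref{prop:anchored_explicit} never really uses $p>p_c$ when $\theta(p_c)>0$, and invoking the standard divergence $\chi(p_c)=\infty$ when $\theta(p_c)=0$) is correct and in fact supplies a detail the paper leaves implicit.
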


\begin{proof}[Proof of \cref{cor:dichotomy}]
This is immediate from \cref{thm:main} and \cref{cor:amenable}.
\end{proof}

\begin{remark}
\label{remark:nontransitive_counterexample}
Consider the graph $G$ formed by attaching a binary tree to each vertex of $\Z^3$. This graph is nonamenable, has bounded degrees, and satisfies $p_c(G)=p_c(\Z^3)<1/2$. Moreover, if $p_c(G)<p < 1/2$, then Bernoulli-$p$ percolation on $G$ has a unique infinite cluster almost surely, which is distributed as the graph obtained by taking the unique infinite cluster of percolation on $\Z^3$ and attaching an independent subcritical Galton-Watson tree to each vertex. This graph clearly has subexponential growth, and consequently does not have anchored expansion. 
This example shows that \cref{thm:main} cannot be extended to arbitrary connected, bounded degree, nonamenable graphs, and therefore gives a negative answer to \cite[Question 6.5]{BLS99} as originally stated.
\end{remark}

\subsection{Random walk analysis}
\label{subsec:randomwalk}

\begin{proof}[Proof of \cref{cor:random_walk}]
As discussed in the introduction, it is a theorem of Vir\'ag \cite[Theorem 1.2]{v00} that if $H$ is a bounded degree graph with anchored expansion then there exists a positive constant $c$ such that the simple random walk return probabilities on $H$ satisfy the inequality $p_n(v,v) \leq C_v e^{-cn^{1/3}}$ for every vertex $v$ of $H$, where $C_v$ is a $v$-dependent constant.
This theorem together with \cref{cor:anchored_expansion} yield the desired upper bound. Thus, it suffices to prove that for each $p_c<p<1$ we have that
\begin{equation}
\label{eq:randomwalklowerbound}
\liminf_{n\to \infty} -n^{-1/3}\log p_{2n}^\omega(v,v) > 0
\end{equation}
almost surely on the event that the cluster of $v$ is infinite.
We call a path in $K_v$ a \textbf{pipe} if all of its internal vertices have degree two in $K_v$. The estimate \eqref{eq:randomwalklowerbound} will be deduced as a consequence of the following geometric claim: 
\begin{equation}
\label{eq:claim}
\begin{array}{l}
\text{There exists a constant $c>0$ such that the intrinsic $n$-ball around $v$ contains a pipe }\\ \text{of length $\lceil cn \rceil$ for every $n$ sufficiently large a.s.\ on the event that $K_v$ is infinite.}
\end{array}
\end{equation}

We now prove this claim.
Fix $p_c<p<1$ and $v\in V$. Write $B_\mathrm{int}(v,n)$ for the intrinsic ball of radius $n$ around $v$ in $K_v$, and $\partial B_\mathrm{int}(v,n)$ for the set of vertices with intrinsic distance exactly $n$ from $v$.
Since $K_v$ has anchored expansion a.s.\ on the event that it is infinite by \cref{cor:anchored_expansion}, it must trivially also have exponential growth a.s.\ on the event that it is infinite. Indeed, it follows from \cref{thm:main} and \cref{prop:anchored_explicit} that there exists a constant $g_1>1$ and a random variable $N_1$ that is almost surely finite on the event that $K_v$ is infinite such that $\# \partial B_\mathrm{int}(v,n)\geq g_1^n$ for every $n\geq N_1$. On the other hand, since the cardinality of the intrinsic $n$-ball of any vertex is deterministically at most $M^n$, where $M$ is the maximum degree of $G$, it follows that
\begin{multline*}
\#\{u \in \partial B_\mathrm{int}(v,n) : u \text { lies on an intrinsic geodesic from $v$ to $\partial B_\mathrm{int}(v,n+m)$} \}
\\\geq M^{-m} \#\partial B_\mathrm{int}(v,n+m)
\geq M^{-m}g_1^{n+m}
\end{multline*}
for every $n\geq N_1$ and $m\geq 0$ almost surely on the event that $K_v$ is infinite. It follows in particular that there exist constants $c_1>0$ and $g_2>1$ such that
\begin{equation*}
\#\{u \in \partial B_\mathrm{int}(v,n) : u \text { lies on an intrinsic geodesic from $v$ to $\partial B_\mathrm{int}(v, n+\lceil c_1 n\rceil)$} \}
\geq  g_2^n
\end{equation*}
for every $n\geq N_1$ 
almost surely on the event that the cluster of $v$ is infinite. Let $A_{n,m}$ be the set of vertices $u$ in $\partial B_\mathrm{int}(v,n)$ such that there exists a \emph{not necessarily open} path of length $m$ in $G$ starting at $u$ that does not visit any vertex of $B_\mathrm{int}(v,n)$ other than at its starting point, so that
\begin{equation*}
\# A_{n,\lceil c_1 n \rceil} \geq \#\{u \in \partial B_\mathrm{int}(v,n) : u \text { lies on an intrinsic  geodesic from $v$ to $\partial B_\mathrm{int}(v,n+\lceil  c_1 n\rceil)$} \}
\geq  g_2^n
\end{equation*}
for every $n\geq N_1$ almost surely on the event that the cluster of $v$ is infinite. 
Moreover, choosing points greedily shows that for every $n,m \geq 1$ and $r \geq 1$ there exists a subset $A_{n,m,r}$ of $A_{n,m}$ with cardinality at least $M^{-r} \# A_{n,m}$ such that any two distinct points in $A_{n,m,r}$ have distance at least $r$ in $G$. 
Furthermore, we can and do choose $A_{n,m,r}$ in such a way that it is a measurable function of $B_\mathrm{int}(v,n)$. (This property is crucial to the argument of the next paragraph. It is very important here that the definitions of $A_{n,m}$ and $A_{n,m,r}$ do not require the path of length $m$ to be open; the argument would not work otherwise.)
Putting the above facts together, we deduce that there exist  constants $c_2>0$ and $g_3>1$ such that
\begin{equation}
\label{eq:claimproof1}
\# A_{n,\lceil c_2 n\rceil, \lceil c_2 n \rceil} \geq g_3^n
\end{equation}
for every $n\geq N_1$ almost surely on the event that the cluster of $v$ is infinite.

Let $k\geq 1$ and let $\sA_{n,k}$ be the event that $B_\mathrm{int}(v,n+ k)$ contains a pipe of length $k$.  
Let $m \geq 2(k +1)$. 
 For each element $u$ of $A_{n,m,m}$, the conditional probability given $B_\mathrm{int}(v,n)$ that $K_v$ contains a pipe of length $k$ starting at $u$ is at least $[p(1-p)^{M-1}]^{k}$, since we can take a path of the required length starting at $u$ that is disjoint from $B_\mathrm{int}(v,n)$ other than at $u$, find all of its edges to be open, and find to be closed all the edges that touch a vertex of the path other than $u$ but are not included in the path. On the other hand, the separation between the points of $A_{n,m,m}$ makes all of these events independent from each other, and we deduce that
\begin{equation}
\label{eq:claimproof2}
\bP_p\bigl(\sA_{n,k} \mid B_\mathrm{int}(v,n)\bigr) \geq 1-(1-[p(1-p)^{M-1}]^{k})^{\# A_{n,m,m}}
\end{equation}
for every $n,k \geq 1$ and $m\geq 2(k+1)$. Choosing 
\[c= \min\left\{\frac{\log g_3}{-2\log (p(1-p)^{M-1})},\, \frac{c_2}{2}\right\}>0\]
and using the inequality $((x-1)/x)^x \leq e^{-1}$ yields that
 \begin{equation}
\label{eq:claimproof3}
\bP_p\Bigl(\sA_{n, \lceil c n \rceil} \cup \{ \# A_{n,\lceil 2cn \rceil, \lceil 2cn \rceil} < g_3^n \} \mid B_\mathrm{int}(v,n)\Bigr) \geq 1-\bigl(1-g_3^{-n/2}\bigr)^{g_3^n} \geq 1-e^{-g_3^{n/2}}.
\end{equation}
 Thus, it follows by Borel-Cantelli that there exists an almost surely finite $N_2$ such that the event $\sA_{n, \lceil c n \rceil} \cup \{ \# A_{n,\lceil 2cn \rceil, \lceil 2cn \rceil} < g_3^n \}$ occurs for every $n \geq N_2$ almost surely. Together with \eqref{eq:claimproof1}, this shows that the event $\sA_{n, \lceil c n \rceil}$ holds for every $n \geq N_1 \vee N_2$ almost surely on the event that $K_v$ is infinite. This completes the proof of \eqref{eq:claim}.



 It remains to deduce \eqref{eq:randomwalklowerbound} from \eqref{eq:claim}. This argument is well-known and appears in e.g.\ \cite[Example 6.1]{v00}, so we will keep the discussion brief. Let $c>0$ and $N<\infty$ be random variables such  that $B_\mathrm{int}(v,\lceil n^{1/3}\rceil)$ contains a pipe of length $\lceil c n^{1/3} \rceil$ for every $n \geq N$. Pick one such pipe, call it the \textbf{$n$-pipe}, and let $w(n)$ be the vertex of the $n$-pipe at minimal intrinsic distance from $v$. In time $2n$, the random walk on $G$ can return to $v$ using the following strategy: Walk to $w(n)$ in exactly $d_\mathrm{int}(v,w(n))=O(n^{1/3})$ steps, spend the following $2n-2d_\mathrm{int}(v,w(n))=\Theta(n)$ steps performing an excursion from $w(n)$ to itself inside the $n$-pipe, then return to $v$ in the final $d_\mathrm{int}(v,w(n))=O(n^{1/3})$ steps. Each of these three stages has probability at least $e^{-C n^{1/3}}$ to occur for an appropriate choice of constant $C$, concluding the proof. 
\end{proof}

\section{Extension to quasi-transitive graphs}
\label{sec:QT}

Recall that a graph is said to be \textbf{quasi-transitive} if the action of its automorphism group on its vertex set has at most finitely many orbits. Theorems concerning percolation on transitive graphs can almost always be generalized to the quasi-transitive case, and ours are no exception. 

\begin{thm}
\label{thm:mainQT}
Let $G=(V,E)$ be a connected, locally finite, nonamenable, quasi-transitive graph. Then 
$\zeta(p)>0$ 
for every $p_c < p \leq 1$.
\end{thm}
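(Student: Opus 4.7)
The strategy is to mirror the proof of \cref{thm:main}, relying on the observation that the proofs of \cref{prop:big_fluctuation,prop:Skinny_Br} are already stated and proved for arbitrary countable, locally finite graphs, so they transfer to the quasi-transitive case without any change. The only ingredient in the proof of \cref{thm:main} that uses transitivity is \cref{prop:NegativeTerm}, which in turn is proven via \cref{prop:BurtonKeane} (the Burton--Keane-style lower bound $\bE_p[\mathscr{P}_\omega(S\to\infty)]\geq c_p|E(S)|$) and \cref{lem:furcations} (existence of an invariant percolation process with furcations dominated by $\omega_p$). My plan is therefore to establish quasi-transitive analogues of these two supporting results, after which the rest of \cref{sec:mainproof} carries over verbatim. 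A separate short observation is that $\zeta(p,v)$ does not depend on the choice of $v$ on a connected graph: if $v,w$ are joined by a path of length $r$ in $G$, forcibly opening that path shows $\bP_p(n\leq |E(K_w)|<\infty)\leq p^{-r}\bP_p(n\leq |E(K_v)|<\infty)$, which yields $\zeta(p,v)=\zeta(p,w)$. Hence proving $\zeta(p,v)>0$ for one vertex $v$ suffices.

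The core step is a quasi-transitive version of \cref{lem:furcations}: for every $p_c<p\leq 1$ there exists an $\Aut(G)$-invariant percolation process $\eta\preceq \omega_p$ whose furcation set $\Lambda$ satisfies $\inf_{v\in V}\P(v\in\Lambda)>0$. I would follow the same dichotomy used in \cref{lem:furcations}. If $\Aut(G)$ has a closed quasi-transitive nonunimodular subgroup, the quasi-transitive extension of the main result of \cite{Hutchcroftnonunimodularperc} gives $p_c<p_u$, so that Bernoulli-$q$ percolation for $q\in(p_c,p_u\wedge p)$ has infinitely many infinite clusters; the usual Burton--Keane argument produces a furcation at some vertex with positive probability, and insertion tolerance of Bernoulli percolation then upgrades this, via a local surgery around a chosen vertex in each of the finitely many orbits, to a uniform positive lower bound on $\P(v\in\Lambda)$. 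In the complementary case, where every quasi-transitive closed subgroup of $\Aut(G)$ is unimodular, I would invoke the quasi-transitive extension of the Benjamini--Lyons--Schramm spanning forest construction (as formulated in \cite{AL07}) to obtain an invariant forest $\eta\preceq\omega_p$ whose components are all infinitely-ended. Combining $\eta$ with an independent sprinkling of parameter $\delta>0$, chosen small enough that the composite process is still dominated by $\omega_p$ (using $p>p_c$), and exploiting the fact that every orbit contains vertices at bounded distance from infinitely many ends of the infinite components, gives the required uniform lower bound on $\P(v\in\Lambda)$.

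Given such an $\eta$, the quasi-transitive analogue of \cref{prop:BurtonKeane} follows from \cref{lem:Euler} exactly as in the transitive case: stochastic domination gives $\bE_p[\mathscr{P}_\omega(S\to\infty)]\geq \E\,\mathscr{P}_\eta(S\to\infty)$, and the deterministic inequality $\mathscr{P}_\eta(S\to\infty)\geq|\Lambda\cap S|$ from the proof of \cref{prop:BurtonKeane} is purely combinatorial and does not use transitivity. Taking expectations,
\[
\bE_p[\mathscr{P}_\omega(S\to\infty)] \;\geq\; \sum_{v\in S}\P(v\in\Lambda) \;\geq\; \Bigl(\inf_{v\in V}\P(v\in\Lambda)\Bigr)|S| \;\geq\; \frac{\inf_{v\in V}\P(v\in\Lambda)}{\deg_{\max}(G)}\,|E(S)|,
\]
where bounded degree is guaranteed by local finiteness and quasi-transitivity. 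The coupling proof of \cref{prop:NegativeTerm} then goes through unchanged, as do the final moment estimates \eqref{eq:startingpoint}--\eqref{eq:U_twoterms} of \cref{sec:mainproof}. Combined with \cref{prop:big_fluctuation,prop:Skinny_Br}, this yields finiteness of $\bE_{p,\infty}[E_v e^{tE_v}]$ for some $t>0$ and every fixed $v$, hence $\zeta(p)>0$.

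The main obstacle will be the construction of the invariant process $\eta$ with \emph{uniform} furcation probability across orbits in the unimodular case: the BLS forest is not itself insertion-tolerant, so the naive transitive argument does not directly give the same lower bound at every orbit, and some extra work—likely via a small Bernoulli sprinkling combined with a mass-transport or indistinguishability-type argument applied orbit-by-orbit—is needed. Once this uniformity is secured, the remaining steps are essentially bookkeeping from the transitive proof.
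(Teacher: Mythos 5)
There is a genuine gap, and it sits exactly where you flagged your ``main obstacle'': the uniform lower bound $\inf_{v\in V}\P(v\in\Lambda)>0$ that your whole argument rests on is in general \emph{impossible}, not merely hard to prove. A furcation must have degree at least $3$ in the cluster, so any orbit consisting of degree-$2$ vertices (e.g.\ take the $3$-regular tree and subdivide every edge once: a connected, locally finite, nonamenable, quasi-transitive graph with two orbits, one of which has degree $2$) has $\P(v\in\Lambda)=0$ for \emph{every} invariant process $\eta$, no matter how you sprinkle, surger, or mass-transport. The obstruction is structural, so neither the insertion-tolerance surgery you propose in the nonunimodular case nor the ``sprinkling plus indistinguishability'' idea in the unimodular case can produce the uniform bound. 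Without it, the Burton--Keane computation only gives $\bE_p[\mathscr{P}_\omega(S\to\infty)]\geq c_p\,|S\cap V_{i_0}|$ for \emph{some single} orbit $V_{i_0}$, and this does not imply a bound of the form $c\,|E(S)|$ for arbitrary finite $S$ (take $S$ disjoint from $V_{i_0}$), so \cref{prop:NegativeTerm} does not ``go through unchanged'' as you claim.

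The paper's proof accepts the single-orbit bound and supplies the missing bridge with a new lemma (\cref{lem:QTexploration}): by ordering the orbits so that each $V_m$ is adjacent to $\bigcup_{\ell<m}V_\ell$ and running an edge-exploration with a binomial large-deviation estimate, one shows $\bP_p(n\leq E_v<\infty,\ |K_v\cap V_i|\leq\alpha E_v)\leq e^{-2t_0 n}$ for every orbit $i$. This converts \eqref{eq:NegativeTerm_QuasiTransitive} into the weakened inequality $\bD_{p,n}[e^{tE_v}]\geq c\,\bE_{p,n}[E_v e^{tE_v}]-C$ with a finite additive error, which is all the rest of \cref{sec:mainproof} needs. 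Your remaining observations (that \cref{prop:big_fluctuation,prop:Skinny_Br} are graph-independent, that \cref{lem:furcations,prop:BurtonKeane} generalize to give furcations in some orbit, and that $\zeta(p)$ is independent of the base vertex) are correct and match the paper, but the proposal is incomplete without a substitute for \cref{lem:QTexploration} or a correct alternative to the uniform-furcation claim.
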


Once this theorem is established, extensions of \cref{cor:analyticity,cor:anchored_expansion,cor:random_walk} to the quasi-transitive case follow by essentially the same proofs as in the transitive case.

Let $G=(V,E)$ be a connected, locally finite, nonamenable, quasi-transitive graph, and let $v$ be a vertex of $G$.
The only place in the proof of \cref{thm:main} where transitivity is used is in the proof of \cref{prop:NegativeTerm}. Thus, to prove \cref{thm:mainQT}, it suffices to prove that for every $p_c<p<1$ there exist positive constants $t_0,c$ and $C$ such that
\begin{equation}
\label{eq:NegativeTerm_QuasiTransitive2}
\bD_{p,n}\left[e^{t E_v}\right] \geq c \bE_{p,n}\left[E_v e^{tE_v} \right] - C
\end{equation}
for every $n\geq 1$ and $0 < t \leq t_0$. Indeed, once this is established the proof may be concluded in a very similar way to the transitive case.

Let $V_1,\ldots,V_k$ be the orbits of the action of $\Aut(G)$ on $V$. The proof of \cref{lem:furcations} generalizes  to show that for every $p_c<p<1$, there exists an automorphism-invariant percolation process $\eta$ such that $\eta$ has furcations almost surely and $\eta$ is stochastically dominated by Bernoulli-$p$ bond percolation on $G$. Thus, the proof of \cref{prop:BurtonKeane} yields that there exists $1 \leq i_0 \leq k$ and a positive constant $c_p$ such that
\begin{equation}
\label{eq:BurtonKeane_QuasiTransitive}
\bE_p\left[ \sP_\omega(S \rightarrow \infty)\right]\geq c_p |S \cap V_{i_0}|
\end{equation}
for every finite set $S \subseteq V$. The proof of \cref{prop:NegativeTerm} then yields that
\begin{equation}
\label{eq:NegativeTerm_QuasiTransitive}
\bD_{p,n}\left[e^{t E_v}\right] \geq \frac{c_p}{1-p} \bE_{p,n}\left[|K_v \cap V_{i_0}| e^{tE_v} \right]
\end{equation}
for every $t>0$ and $n\geq 1$. In order to deduce an inequality of the form \eqref{eq:NegativeTerm_QuasiTransitive2} from \eqref{eq:NegativeTerm_QuasiTransitive}, it suffices to prove the following lemma.

\begin{lemma}
\label{lem:QTexploration}
Let $G=(V,E)$ be a connected, locally finite, quasi-transitive graph with maximum degree $M$, let $0<p<1$, and let $V_1\ldots,V_k$ be the orbits of the action of $\Aut(G)$ on $V$. Then for every $0<p<1$ there exist positive constants $\alpha=\alpha(p,k,M)$ and $t_0=t_0(p,k,M)$ such that
\begin{equation}
\label{eq:QT_LD}
\bP_p\left( n \leq E_v <\infty,\, |K_v \cap V_{i}| \leq \alpha E_v\right) \leq e^{-2t_0 n}
\end{equation}
for every $n\geq 1$ and $1 \leq i \leq k$.
\end{lemma}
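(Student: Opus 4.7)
My plan is to translate the event $\{|K_v \cap V_i| \leq \alpha E_v\}$ into a lower bound on the boundary $|\partial K_v|$ via a deterministic isoperimetric inequality from quasi-transitivity, and then estimate the probability by a Peierls-type counting argument. By quasi-transitivity, I would first fix a constant $R$ so that every ball of radius $R$ in $G$ meets $V_i$; setting $B := M^R$, I then fix a map $\phi: V \to V_i$ with $d_G(u, \phi(u)) \leq R$ (which is automatically at most $B$-to-one) and a geodesic $P_u$ from $u$ to $\phi(u)$ for each $u \in V$. For any finite connected $H$ with $v \in V(H)$, I would partition $V(H) \setminus V_i$ into vertices $u$ with $\phi(u) \in V(H)$ (at most $B|V(H) \cap V_i|$ such, by the $B$-to-one property) and vertices with $\phi(u) \notin V(H)$ (each of which provides an edge of $\partial H$ along $P_u$, and each boundary edge is charged to at most $2B$ such vertices since both must lie within distance $R$ of an endpoint of the edge). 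Rearranging yields the deterministic inequality
\[
(1+B)|V(H) \cap V_i| + 2B|\partial H| \geq |V(H)|.
\]
Choosing $\alpha := 1/(2M(1+B))$ and using $E_v \leq M|V(K_v)|$, on the event in \eqref{eq:QT_LD} we get $|V(K_v) \cap V_i| \leq |V(K_v)|/(2(1+B))$ and hence $|\partial K_v| \geq |V(K_v)|/(4B) \geq E_v/(4BM) \geq n/(4BM)$.

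With this boundary bound in hand I would apply a Peierls-type estimate: using $\bP_p(K_v = H) = p^{|E_o(H)|}(1-p)^{|\partial H|}$, the inequalities $p^{|E_o(H)|} \leq p^{N-1}$ (as $H$ is connected with $|V(H)|=N$) and $(1-p)^{|\partial H|} \leq (1-p)^{N/(4B)}$ for bad $H$ of size $N$, together with the classical bound $C_0^N$ (with $C_0 = C_0(M)$) on the number of connected $N$-vertex subgraphs of $G$ rooted at $v$, I would obtain
\[
\bP_p(n \leq E_v < \infty,\, |K_v \cap V_i| \leq \alpha E_v) \leq p^{-1}\sum_{N \geq n/M}\bigl(C_0\, p\,(1-p)^{1/(4B)}\bigr)^N,
\]
which decays exponentially in $n$ provided the base $C_0\, p\,(1-p)^{1/(4B)}$ is strictly less than $1$.

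The main obstacle will be uniformity in $p$: the base above need not lie strictly below $1$ for all $p \in (0,1)$ and can approach $1$ near its maximizer when $B$ is large. I would overcome this by a case split. For $p < p_c(G)$, the Aizenman--Barsky--Menshikov subcritical sharpness result already gives $\bP_p(E_v \geq n) \leq e^{-c_p n}$ directly, so the lemma is immediate. For $p \geq p_c(G)$, since $\alpha$ and $t_0$ are permitted to depend on $p$, I would apply a sprinkling refinement: write $\omega = \omega_1 \vee \omega_2$ as two independent Bernoulli-$(1-\sqrt{1-p})$ percolations, partition $V \setminus V_i$ into $O(M^{2R})$ color classes within which the paths $P_u$ are pairwise edge-disjoint, explore the cluster $K_v^{(1)}$ in $\omega_1$, and use Chernoff concentration conditional on $\omega_1$ within each color class to conclude that a positive fraction of $u \in K_v^{(1)} \setminus V_i$ satisfy $\phi(u) \in K_v$, which combined with a modest lower bound on $|K_v^{(1)}|$ in terms of $E_v$ yields the required proportion of $V_i$-vertices in $K_v$.
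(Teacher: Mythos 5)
Your deterministic reduction (every vertex lies within distance $R\le k-1$ of $V_i$, hence $B=M^{R}$ depends only on $(k,M)$, and the resulting inequality forcing $|\partial K_v|\gtrsim E_v/B M$ on the bad event) is correct, and the Peierls estimate it feeds is fine for $p$ near $0$ or near $1$; you also correctly diagnose that the base $C_0\,p\,(1-p)^{1/(4B)}$ is not below $1$ in the intermediate range. The genuine gap is in your repair for $p\ge p_c$, at the step you call ``a modest lower bound on $|K_v^{(1)}|$ in terms of $E_v$''. No such bound with exponentially small failure probability is available: the sprinkled parameter $q=1-\sqrt{1-p}$ may well lie below $p_c(G)$, and in any case the event that $v$ is isolated in $\omega_1$ has probability at least $(1-q)^{M}$, uniformly in $n$, while $K_v$ can still be a large finite cluster on that event; then $|K_v^{(1)}|=1$ and your Chernoff step (whose failure probability is exponentially small only in $|K_v^{(1)}|$, not in $n$) gives nothing. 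To close the argument you would need $\bP_p\bigl(n\le E_v<\infty,\ |K_v^{(1)}|\le \epsilon E_v\bigr)\le e^{-cn}$, and the only plausible source of such a bound is the exponential decay of $\bP_p(n\le E_v<\infty)$ itself --- which is precisely the conclusion of the quasi-transitive main theorem that this lemma is an ingredient of, so the step is circular. A secondary mismatch: in your subcritical branch the decay rate from sharpness depends on $G$ through $\zeta(p)$, whereas the lemma asserts constants depending only on $(p,k,M)$; this is harmless for the paper's application but not what is stated.

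For contrast, the paper avoids two-round randomness entirely: it orders the orbits so that every vertex of $V_m$ has a $G$-neighbour in $\bigcup_{\ell<m}V_\ell$, explores $K_v$ at parameter $p$ itself, and notes that the exploration must query at least $|K_v\cap V_m|$ edges joining $V_m$ to lower orbits, each newly queried edge being open with conditional probability exactly $p$; having few cluster vertices in lower orbits forces at most a $p/2$-fraction of these queried edges to be open, so a binomial large-deviation bound and a telescoping union bound over $m$ give the estimate with constants depending only on $(p,k,M)$. If you want to salvage your geometric picture with the paths $P_u$, you must work with randomness revealed during a single exploration at parameter $p$; the difficulty is that after conditioning on $K_v$ the paths $P_u$ may run through edges of $\partial K_v$, which are forced closed --- exactly the obstruction the paper's orbit-by-orbit queried-edge count is designed to sidestep.
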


\begin{proof}[Proof of \cref{lem:QTexploration}]
It suffices to prove the claim for $i=1$. Fix $0<p<1$.  Since $G$ is connected, we may reorder $V_2,V_3,\ldots,V_k$ so that $V_j$ is adjacent to $\bigcup_{\ell=1}^{j-1} V_\ell$ for every $2 \leq j \leq k$. Since $\Aut(G)$ acts transitively on $V_\ell$ for each $1 \leq \ell \leq k$, every vertex in $V_m$ must be adjacent to at least one vertex in $\cup_{\ell=1}^{m-1} V_\ell$ for each $2 \leq m \leq k$. Let $M$ be the maximum degree of $G$, and write $K_v^m = K_v \cap \cup_{\ell=1}^m V_\ell$ for each $1 \leq m \leq k$. 
We first claim that there exists a positive constant $c=c(p,M)$ such that for each $2 \leq m \leq k$  we have that 
\begin{multline}
\bP_p\left(s \leq \bigl|K_v^m\bigr| < \infty \text{ and } \bigl|K_v^{m-1} \bigr| \leq \frac{p}{p+2M} \bigl|K_v^m\bigr| \right)\\ \leq
\bP_p\left(\frac{2Ms}{p+2M} \leq \bigl|K_v \cap V_m \bigr| <\infty \text{ and } \bigl|K_v^{m-1} \bigr| \leq \frac{p}{2M} \bigl|K_v \cap  V_m\bigr| \right)
\leq
 e^{-c s}
 \label{eq:QT_LD1}
\end{multline}
for every $s>0$. The first inequality is trivial. For the second, consider exploring $K_v$ one edge at a time. On the event under consideration, we  must query some number $N \geq |K_v \cap V_m| \geq 2M s/(p+2M)$ edges with one endpoint in $V_m$ and the other in $\bigcup_{\ell=1}^{m-1} V_\ell$ and find that at most $p N/2$ of these edges are open. 
Since $p/2<p$, the claimed inequality \eqref{eq:QT_LD1} follows by standard large deviation estimates for Binomial random variables.

To conclude, we note that we trivially have $|K_v^k|=|K_v| \geq E_v /M$, and apply \eqref{eq:QT_LD1} and a union bound to deduce that
\begin{multline*}
\bP_p\left( n \leq E_v <\infty,\, |K_v \cap V_1| \leq \left[\frac{p}{p+2M}\right]^{k-1} \frac{E_v}{M} \right)\\ \leq 
\sum_{m=2}^{k}
\bP_p\left( \left[\frac{p}{p+2M}\right]^{k-m} \frac{n}{M} \leq \bigl|K_v^{m}\bigr| < \infty \text{ and } \bigl|K_v^{m-1}\bigr| \leq \frac{p}{p+2M} \bigl|K_v^{m}\bigr| \right)\\
\leq (k-1) \exp\left[-\left[\frac{p}{p+2M}\right]^{k-1}\frac{cn}{M}\right]
\end{multline*}
for every $n\geq 1$. This immediately implies the claim.
\end{proof}

\begin{proof}[Proof of \cref{thm:mainQT}]
Let $t_0=t_0(p,k,M)$ and $\alpha=\alpha(p,k,M)$ be as in \cref{lem:QTexploration}, and note that \eqref{eq:NegativeTerm_QuasiTransitive} implies that
\begin{equation}
\label{eq:NegativeTerm_QuasiTransitive3}
\bD_{p,n}\left[e^{t E_v}\right] \geq \frac{c_p \alpha}{1-p} \bE_{p,n}\left[E_v e^{tE_v} \right]- \frac{c_p}{1-p}\bE_p\left[E_v e^{t_0 E_v} \mathbbm{1}\left(|K_v \cap V_{i}| \leq \alpha E_v\right)\right]
\end{equation}
for every $n\geq 1$ and $t\leq t_0$. The second term on the right is finite by \cref{lem:QTexploration}, so that \eqref{eq:NegativeTerm_QuasiTransitive3} is of the form required by \eqref{eq:NegativeTerm_QuasiTransitive2}. Thus, the proof of \cref{thm:mainQT} may be concluded in a very similar manner to the proof of \cref{thm:mainQT} as discussed above.
\end{proof}

\section{Closing remarks and open problems}
\label{sec:closing}

\subsection{Analyticity at the uniqueness threshold}
The analyticity of $\theta(p)$ and $\tau^f_p(x,y)$ throughout the entire supercritical phase $(p_c,1)$ established by \cref{cor:analyticity} is in stark contrast to the behaviour of the \emph{untruncated} two-point function $\tau_p(x,y)=\bP_p(x \leftrightarrow v)$, which can be \emph{discontinuous} on the same interval under the same hypotheses. 

Indeed, it is known that there exists a unimodular transitive graph $G$ for which $p_c<p_u$ and for which there are infinitely many infinite clusters in Bernoulli-$p_u$ percolation almost surely. The most easily understood example with these properties is probably the product $T \times T$ of two three-regular trees, which is treated by the results of \cite{1712.04911} and \cite{MR1770624}. 
We claim that for such a graph $G$, there must exist vertices $x$ and $y$ such that $\tau_p(x,y)$ is discontinuous at $p_u$. Indeed, it is a theorem of Lyons and Schramm \cite[Theorem 4.1]{LS99} that if $G$ is transitive and unimodular and $p\in (0,1)$ is such that there are infinitely many infinite clusters $\bP_p$-a.s.\ then $\inf_{x,y} \tau_p(x,y) = 0$ (see also \cite{tang2018heavy} for the nonunimodular case). Thus, it follows by our assumptions that there exist $x,y \in V$ with $\tau_{p_u}(x,y) \leq \frac{1}{2}\theta(p_u)^2$. On the other hand, the Harris-FKG inequality implies that $\tau_p(x,y) \geq \bP_p(x,y$ both in the infinite cluster$) \geq \theta(p)^2 \geq \theta(p_u)^2$ for every $p_u<p\leq 1$, so that $\tau_p(x,y)$ must have a jump discontinuity at $p_u$ as claimed. (The fact that the qualitative change in behaviour at $p_u$ is not necessarily reflected in any failure of regularity of $\theta(p)$ at $p_u$ was also remarked on in \cite{georgakopoulos2018analyticity}.)

\subsection{Intrinsic geodesics in the hyperbolic plane}

\cref{thm:mainQT} also has consequences for the geometry of intrinsic geodesics in percolation in the hyperbolic plane. Indeed, let $M$ be a locally finite, quasi-transitive, simply connected, nonamenable planar map with locally finite dual $M^\dagger$, which is also quasi-transitive and nonamenable. Every edge $e$ of $M$ has a corresponding dual edge $e^\dagger$. (See e.g.\ \cite[Section 2.1]{unimodular2} for detailed definitions.) If $\omega$ is Bernoulli-$p$ bond percolation on $M$ then the configuration $\omega^\dagger$ defined by $\omega^\dagger(e^\dagger) = 1-\omega(e)$ is distributed as Bernoulli-$(1-p)$ bond percolation on $M^\dagger$, and it follows from the results of \cite{BS00} that $p_u(M)=1-p_c(M^\dagger)$, so that $p_u$-percolation on $M$ is dual to $p_c$-percolation on $M^\dagger$. 

Suppose that $e$ is an edge of $M$ with endpoints $x$ and $y$, and let $f$ and $g$ be the two faces incident to $e$. Let $\omega$ be Bernoulli-$p$ bond percolation on $M$ and let $K_1$ and $K_2$ be the clusters of $f$ and $g$ in $\omega^\dagger \setminus \{e^\dagger\}$.  If $e$ is closed and $x$ is connected to $y$ in $\omega$ then at least one of $K_1$ or $K_2$ must be finite. Moreover, if $x$ is connected to $y$ and $K_i$ is finite then the collection of edges other than $e$ whose duals are in the boundary of $K_i$ must contain an open path from $x$ to $y$, so that
\[
d_\mathrm{int}(x,y) \leq \min\{ |E(K_1)|,|E(K_2)|\}.
\]
Thus, it follows from sharpness of the phase transition \eqref{eq:sharpness} and \cref{thm:mainQT} that for every $p\in(0,p_u) \cup (p_u,1)$ there exists a constant $c_p>0$ such that
\begin{equation}
\bP_{p}\bigl[ d_\mathrm{int}(x,y) \geq n \mid x \leftrightarrow y \bigr] \leq e^{-c_p n}
\end{equation}
for every pair of adjacent vertices $x$ and $y$ of $M$ and every $n\geq 1$. (With a little more work one can also obtain similar bounds for non-neighbouring pairs of vertices.) This contrasts the behaviour \emph{at} $p_u$, where it is proven in \cite[Theorem 6.1]{hutchcroft20192} that, under the same assumptions, $\bP_{p_u}\bigl[ d_\mathrm{int}(x,y) \geq n \mid x \leftrightarrow y \bigr] \asymp n^{-1}$ for some neighbouring pairs of vertices. 

\subsection{Conjectures for amenable graphs}
\label{sec:conjectures}

We end with some conjectures concerning supercritical percolation on general transitive graphs that could potentially unify our results with those from the Euclidean case \cite{MR1048927,MR1055419,MR594824,Pete08}.
We expect that these conjectures have been around for some time as folklore.
We recall that if $G$ is a connected, locally finite graph, the \textbf{isoperimetric profile} of $G$ is defined to be
\[
\psi(t)=\psi(G,t) = \inf\Bigl\{ |\partial_E K| : K \subseteq V,\, t \leq \sum_{v\in K} \deg(v) < \infty \Bigr\}.
\]

\begin{conjecture}
\label{conj:generaltail}
Let $G=(V,E)$ be an infinite, connected, locally finite, transitive graph. Then for every $p_c<p<1$ there exist positive constants $c_p$ and $C_p$ such that
\[
\exp\left[ - C_p \psi( C_p n) \right] \leq \bP_p(n \leq |K_v| < \infty) \leq \exp\left[ -c_p \psi(c_p n) \right] \qquad \text{for every $n \geq 1$.}
\]
\end{conjecture}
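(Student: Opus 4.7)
The plan is to split the argument into a lower bound and an upper bound, with the lower bound being essentially constructive and tractable given known tools, and the upper bound being the main obstacle requiring a substantial new ingredient.

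For the lower bound, the idea is to directly construct a large finite cluster. Fix $p_c<p<1$. Using transitivity, I would choose, for each $n$, a finite vertex set $K\ni v$ with $\sum_{u\in K}\deg(u)=\Theta(n)$ and $|\partial_E K|=\Theta(\psi(n))$, obtained by translating a near-minimizer of the isoperimetric profile so that it contains $v$. The event $\sE_K$ that every edge of $\partial_E K$ is closed has probability at least $(1-p)^{|\partial_E K|}\geq \exp[-C_p\psi(n)]$, and on $\sE_K$ the cluster $K_v$ is automatically contained in $K$ and hence finite. It remains to verify that $\bP_p(|K_v|\geq c_p n\mid \sE_K)$ is bounded away from zero. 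Since percolation inside $K$ is unaffected by the conditioning, I would argue via $\theta(p)>0$ and a second-moment/averaging argument over translates: for "fat" near-optimal isoperimetric sets the conditional density of vertices lying in an internal cluster of size $\Theta(n)$ should be bounded below, and a union bound over translates of $K$ together with transitivity would then pick up $v$ with positive probability. The degenerate case in which no "fat" minimizer exists must be handled separately, but seems amenable to a direct construction using $\theta(p)>0$.

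For the upper bound, the natural strategy is to attempt to upgrade the machinery of \cref{sec:mainproof}. Both \cref{prop:big_fluctuation} and \cref{prop:Skinny_Br} are established without any amenability or transitivity assumption, so they apply verbatim. The only ingredient that uses nonamenability is \cref{prop:NegativeTerm}, whose conclusion $\bD_{p,n}[F(K_v)]\geq \frac{c_p}{1-p}\bE_{p,n}[E_v F(K_v)]$ is linear in $E_v$ and would need to be replaced by
\[
\bD_{p,n}[F(K_v)]\;\geq\;\frac{c_p}{1-p}\,\bE_{p,n}\!\left[\psi(c_p E_v)F(K_v)\right].
\]
Assuming such a bound holds, the same derivation that yielded $\bE_{p,\infty}[E_v e^{tE_v}]<\infty$ in the nonamenable case would, applied to the generating function $\bE_{p,n}[e^{t\psi(E_v)}]$, yield $\bE_{p,\infty}[e^{t\psi(c_p E_v)}]<\infty$ for small $t$, which gives the conjectured upper bound. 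Recasting \cref{prop:BurtonKeane} in this stronger form amounts to producing, in $p$-percolation, $\Omega(\psi(|S|))$ edge-disjoint paths from $S$ to infinity in expectation rather than the $\Omega(|S|)$ paths given by the furcation argument.

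The main obstacle is this sharper path-counting statement. By Menger's theorem and the sprinkling/second-copy coupling used in the proof of \cref{prop:NegativeTerm}, it reduces to showing that the unique infinite cluster $\cI_p$ of supercritical percolation inherits the isoperimetric profile of the ambient graph, in the sense that any finite $S\subseteq V(\cI_p)$ satisfies $|\partial_{E(\cI_p)}S|\gtrsim \psi_G(c|S|)$ with overwhelming probability. For $\Z^d$ this is exactly Pete's theorem \cite{Pete08}, whose proof relies decisively on the Grimmett-Marstrand renormalization, a tool specific to the Euclidean setting. I would expect that for Cayley graphs of polynomial growth a suitable renormalization could be carried out by hand, but for general amenable transitive graphs (including graphs of intermediate growth or exponential growth with $\Phi_E^*(\cI_p)=0$) obtaining such an inherited isoperimetric inequality appears to require genuinely new ideas; this is the step where I would expect the conjecture to really be won or lost.
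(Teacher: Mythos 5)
You should first note that the statement you are proving is \cref{conj:generaltail}, which the paper explicitly leaves \emph{open} (it appears in \cref{sec:conjectures} among ``Conjectures for amenable graphs''); the paper only observes that the nonamenable case is \cref{thm:main}, the case $\Z^d$ is \eqref{eq:Zd_body}, the lower bound is implicit in \cite{MR2677006} for one-ended finitely presented Cayley graphs, and the upper bound is known for such graphs when $p$ is close to $1$ by a Peierls argument. So there is no proof in the paper to compare against, and your proposal is, by your own account, a program rather than a proof: you concede that the decisive ingredient for the upper bound --- a $\psi$-strengthened version of \cref{prop:BurtonKeane}, i.e.\ $\Omega(\psi(|S|))$ edge-disjoint open paths to infinity in expectation, equivalently an inherited isoperimetric inequality for the infinite cluster generalizing Pete's theorem beyond the Grimmett--Marstrand setting --- is exactly what is missing. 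That is indeed where the conjecture stands or falls, but identifying the obstacle is not overcoming it.

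Beyond that acknowledged gap, two steps of the proposal would not go through as written. First, even granting the hypothetical bound $\bD_{p,n}[F]\geq \frac{c_p}{1-p}\bE_{p,n}[\psi(c_pE_v)F]$, it is not true that ``the same derivation'' yields $\bE_{p,\infty}[e^{t\psi(c_pE_v)}]<\infty$: the absorption scheme in \eqref{eq:startingpoint}, \eqref{eq:M_twoterms} and \eqref{eq:U_twoterms} relies on the left-hand side being \emph{linear} in $E_v$, so that error terms restricted to $\{|h_p(K_v)|\geq \alpha E_v\}$ and $\{\Bridges_k(K_v,v)\geq \alpha E_v\}$ can be split off and controlled by \cref{prop:big_fluctuation,prop:Skinny_Br}. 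With only $\psi(c_pE_v)\ll E_v$ available on the left, the thresholds must be lowered to order $\psi(E_v)$, and then the Azuma estimate gives only $\exp(-\alpha^2\psi(n)^2/2n)$, which does not dominate the weight $e^{t\psi(n)}$ in general (on $\Z^d$, $\psi(n)^2/n\asymp n^{(d-2)/d}$ while $\psi(n)\asymp n^{(d-1)/d}$); likewise the moment bounds on $\Bridges_k$ via the polynomial expansion of $e^{tE_v}$ have no analogue for $e^{t\psi(E_v)}$. This is not a cosmetic issue --- it reflects the genuine surface-order fluctuations of supercritical clusters --- so the generating-function machinery of \cref{sec:mainproof} would have to be rebuilt, not rerun. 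Second, your lower bound is also incomplete: a near-minimizer $K$ of the isoperimetric profile need not be Følner-like or ``fat,'' and $\theta(p)>0$ gives density of the infinite cluster in $G$, not a cluster of size $\Theta(n)$ inside an arbitrary finite set $K$ with small boundary after conditioning $\partial_E K$ closed; the second-moment/averaging step and the ``degenerate case'' are precisely where the known argument of \cite{MR2677006} uses finite presentability and one-endedness, and you have not supplied a substitute.
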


Note that the nonamenable case of \cref{conj:generaltail} is exactly \cref{thm:main}, and that the case $G=\Z^d$ is covered by \eqref{eq:Zd_body}. 
In the case that $G$ is a Cayley graph of a one-ended finitely presented group, the lower bound of \cref{conj:generaltail} is implicit in the proof of \cite[Theorem 3]{MR2677006}. For the same class of graphs and for $p$ sufficiently close to $1$ one can also establish the upper bound of \cref{conj:generaltail}  via a Peierls argument. 

Let us also draw attention to the following much weaker question, which also remains open.

\begin{conjecture}
\label{conj:susceptibility}
Let $G=(V,E)$ be an infinite, connected, locally finite, transitive graph. Then the truncated susceptibility $\bE_p |K_v| \mathbbm{1}(|K_v|<\infty)$ is finite for every $p_c<p \leq 1$.
\end{conjecture}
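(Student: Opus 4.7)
The conjecture splits according to whether $G$ is amenable. The nonamenable case is immediate from \cref{thm:main}: $\zeta(p)>0$ implies exponential decay of $\bP_p(n \le |K_v| < \infty)$, hence finiteness of every moment $\bE_p[|K_v|^k \mathbbm{1}(|K_v|<\infty)]$. The substantive content of \cref{conj:susceptibility} is therefore the amenable transitive case with $p_c(G)<1$, where Burton--Keane moreover guarantees a unique infinite cluster $K^\infty$ almost surely for every $p>p_c$. I would try to adapt the Russo-formula machinery of \cref{sec:mainproof}, but working with just the first moment $\bE_{p,n}[E_v]$ instead of an exponential moment. Applying \eqref{eq:TwoDerivatives} with $F(K_v)=E_v$ gives
\[
\frac{d}{dp}\bE_{p,n}[E_v] = \bU_{p,n}[E_v] - \bD_{p,n}[E_v],
\]
and the positive term $\bU_{p,n}[E_v]$ is readily controlled by the $k=1$ specialisation of \cref{subsec:positive_term}, which reduces to a single invocation of \cref{lem:SkinnyRadius} and avoids the whole tree-of-bridges induction. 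The task then becomes to replace the furcation-based lower bound of \cref{prop:NegativeTerm} with one valid on amenable transitive graphs, where trifurcations are ruled out by Burton--Keane.

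The substitute I would try to build uses $K^\infty$ itself in place of a trifurcation structure. A finite cluster $K$ is necessarily separated from $K^\infty$ by its closed boundary edges, and one hopes that a uniformly positive fraction of these edges, if opened, would immediately merge $K$ with $K^\infty$. Concretely, the goal is to show that for every $p>p_c$ there exists $c_p>0$ with
\[
\bE_p\!\left[\#\left\{e\in \partial K_v : \omega(e)=0,\, E_v(\omega^e)=\infty\right\} \,\Big|\, K_v\right] \ge c_p\, E_v\, \mathbbm{1}(E_v<\infty).
\]
Via the exploration/coupling argument already used in the proof of \cref{prop:NegativeTerm}, this would give $\bD_{p,n}[E_v] \ge \frac{c_p}{1-p}\bE_{p,n}[E_v^2]$. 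Combining this with the $k=1$ control on $\bU_{p,n}[E_v]$ and with a martingale bound on $|\bM_{p,n}[E_v]|$ analogous to \cref{prop:big_fluctuation} (which uses neither amenability nor its absence) would bound $\bE_{p,n}[E_v]$ uniformly in $n$ and close the argument.

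\textbf{Main obstacle.} The displayed inequality above is essentially a quantitative form of the Grimmett--Marstrand slab theorem: on $\Z^d$ it is proved using finite-size criteria that rely heavily on the product structure, and no transitive-graph analog is known. A priori $K_v$ could be shielded from $K^\infty$ by thick shells of other finite clusters, so that its effective boundary-to-infinity is far smaller than its volume. A potentially more flexible route would be to prove a Simon--Lieb-type inequality directly for the truncated two-point function $\tau^f_p(u,v)=\bP_p(u\leftrightarrow v,|K_v|<\infty)$, using the BK inequality together with uniqueness of $K^\infty$ to bound $\tau^f_p(o,x)\le \sum_{y\in S}\tau^f_p(o,y)\tau^f_p(y,x)+(\text{error})$ for suitable separating sets $S$ and iterating to summability of $\tau^f_p(o,\cdot)$; here the error term must absorb the possibility that the finite cluster of $o$ meets $K^\infty$ near $S$. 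Either way, the real content lies in finding a transitive-graph substitute for Grimmett--Marstrand renormalisation, which I view as the core open problem buried in \cref{conj:susceptibility}.
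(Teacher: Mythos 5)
The statement you are addressing is \cref{conj:susceptibility}, which the paper explicitly leaves open: no proof is given there, so there is no paper argument to compare against, and your proposal has to be judged as an attack on an open problem. Your reduction of the nonamenable case to \cref{thm:main} is correct and matches the paper's own remark, and your first-moment bookkeeping (controlling $\bU_{p,n}[E_v]$ via the $k=1$ case and \cref{lem:SkinnyRadius}, and $\bM_{p,n}[E_v]$ via an Azuma bound as in \cref{prop:big_fluctuation}) would indeed close the argument \emph{if} your displayed substitute for \cref{prop:NegativeTerm} were available.

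The genuine gap is that this displayed inequality is not merely unproven but false for amenable transitive graphs such as $\Z^d$, $d\ge 3$. If one had $\bE_p\bigl[\#\{e\in\partial K_v:\omega(e)=0,\,E_v(\omega^e)=\infty\}\mid K_v\bigr]\ge c_p E_v\mathbbm{1}(E_v<\infty)$, then inserting it into the paper's scheme with $F(K_v)=e^{tE_v}$ would give $\zeta(p)>0$ — nonamenability enters the proof of \cref{thm:main} only through \cref{prop:NegativeTerm}, while \cref{prop:big_fluctuation} and the bridge estimates hold on arbitrary graphs — contradicting \cref{cor:amenable} and the Aizenman--Delyon--Souillard lower bound \eqref{eq:Zd_body}. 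Geometrically, a large finite cluster of volume $n$ in supercritical $\Z^d$ is typically a droplet shielded from the infinite cluster by a closed surface of size $\Theta(n^{(d-1)/d})$, so only about $\psi(n)\asymp n^{(d-1)/d}$ of its closed boundary edges open to infinity, not $\Theta(n)$; your inequality is thus strictly stronger than Grimmett--Marstrand, not a form of it. The most one can hope for is a bound of the shape $\bD_{p,n}[F]\ge \frac{c_p}{1-p}\bE_{p,n}\bigl[\psi(c_pE_v)F(K_v)\bigr]$, in line with \cref{conj:generaltail}, and whether such a bound — or your Simon--Lieb variant, which faces the same shielding-by-other-finite-clusters issue in its error term — can be established on a general amenable transitive graph without Euclidean renormalisation is exactly the open content of \cref{conj:susceptibility}. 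So your proposal locates the difficulty correctly but does not constitute a proof, and its stated intermediate goal must be weakened before it could even be true.
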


In contrast to the volume, we expect that the \emph{radius} of a finite supercritical cluster has an exponential tail on \emph{every} transitive graph.
Similar statements should also hold for the intrinsic radius. The nonamenable case of this conjecture is implied by \cref{thm:main}, while the case $G=\Z^d$ was proven by Chayes, Chayes, Grimmett, Kesten, and Schonmann \cite{MR1048927}.

\begin{conjecture}
Let $G=(V,E)$ be an infinite, connected, locally finite, transitive graph. Then for every $p_c<p<1$ there exists a positive constant $c_p$ such that
\[
\bP_p(K_v \leftrightarrow \partial B(v,n), |K_v|<\infty) \leq e^{-c_p n} \qquad \text{for every $n \geq 1$.}
\]
\end{conjecture}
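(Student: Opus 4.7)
The conjecture splits naturally into the nonamenable and amenable cases, and the nonamenable case is in fact an immediate consequence of \cref{thm:main} (or \cref{thm:mainQT}): if $K_v$ reaches $\partial B(v,n)$ then $K_v$ contains a simple path of length $n$, so $|K_v|\geq n$ and hence $|E(K_v)| \geq n$. Therefore, for nonamenable transitive $G$ and any $p>p_c$, \cref{thm:main} gives
\[
\bP_p\bigl(K_v \leftrightarrow \partial B(v,n),\, |K_v|<\infty\bigr) \leq \bP_p\bigl(n \leq |E(K_v)| < \infty\bigr) \leq e^{-c_p n}.
\]
The substantive content of the conjecture is therefore the amenable case, and this is where I would focus attention.

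The natural approach in the amenable case is a renormalization argument along the lines of Chayes--Chayes--Grimmett--Kesten--Schonmann~\cite{MR1048927}. I would fix $p>p_c$ and choose a scale $R$ large, declare a `block' $B(x,R)$ to be \emph{good} if some well-chosen local event occurs (e.g.\ there is a unique large cluster in $B(x,3R)$ that connects through every face of $B(x,R)$ to large clusters in neighbouring blocks). The key property one wants is that if all blocks intersecting a shortest path from $v$ out to $\partial B(v,n)$ are good, then the cluster of $v$ cannot be simultaneously finite and reach $\partial B(v,n)$, because $K_v$ would necessarily merge with the dominant infinite cluster in some block along the way. By a Peierls-type counting, combined with an isoperimetric bound, the number of blocks that must be bad in the event $\{K_v \leftrightarrow \partial B(v,n),\,|K_v|<\infty\}$ grows at least linearly in $n/R$, and if the `bad' probability per block can be driven below the connective constant one obtains the desired exponential bound.

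The crux, and main obstacle, is proving that blocks can be made good with probability close to one, uniformly in the block location, for every $p>p_c$. This is essentially a \emph{Grimmett--Marstrand theorem for general transitive graphs}: one needs that Bernoulli-$p$ percolation restricted to a fattened finite subgraph (of controlled `thickness') is already supercritical in a strong quantitative sense, so that large local clusters exist and can be sewn together across block boundaries with high probability. Outside the Euclidean setting, no analog of Grimmett--Marstrand is currently known, and the usual proof (which dynamically renormalizes using the translation structure of $\Z^d$ in a delicate way) does not obviously extend. A complementary and perhaps easier intermediate target would be \cref{conj:susceptibility}, since finiteness of the truncated susceptibility combined with the Hammersley--Simon--Lieb-type approach would already yield a stretched-exponential or at least power-law bound on the tail of the radius.

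A separate, more bare-hands approach worth attempting in parallel is to exploit Burton--Keane uniqueness (which always holds in the amenable transitive setting) together with a contour argument: a finite cluster of $v$ reaching $\partial B(v,n)$ is enclosed by a `dual surface' of closed edges separating it from the unique infinite cluster, and one would like to bound the number of such separating surfaces by their size. This is clean on $\Z^d$ via planarity or layer decompositions but requires a substitute on general Cayley graphs of amenable groups; one candidate is to use a quantitative version of the Følner condition together with the coarse structure of geodesic balls. The hard part, again, is obtaining a bound of the form $\#\{\text{separating edge-sets of size } k\} \leq C^k$ uniformly in the base point, which seems to genuinely require either polynomial growth (where it follows from standard methods) or some finer geometric input unavailable in the general amenable case.
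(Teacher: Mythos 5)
The statement you are asked to prove is stated in the paper only as a conjecture (Section 5.3): the paper offers no proof, remarking merely that the nonamenable case follows from \cref{thm:main} and that the case $G=\Z^d$ is due to Chayes--Chayes--Grimmett--Kesten--Schonmann \cite{MR1048927}. Your reduction of the nonamenable case is correct and is exactly the paper's observation: on the event $\{K_v \leftrightarrow \partial B(v,n),\ |K_v|<\infty\}$ the cluster contains a path from $v$ to distance $n$, so $n \leq |E(K_v)| < \infty$, and \cref{thm:main} (or \cref{thm:mainQT}) gives the exponential bound. So that part is fine, but it is also the easy part and is not new relative to the paper.

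For the amenable case, however, what you have written is a research programme, not a proof, and you say so yourself: the renormalization scheme requires that blocks be good with probability close to one \emph{uniformly for all $p>p_c$}, which is precisely a Grimmett--Marstrand-type statement that is not known beyond the Euclidean setting; and the alternative contour/Burton--Keane route requires a uniform exponential bound on the number of separating edge-sets of a given size, which you also identify as unavailable in general. Neither missing input is supplied, nor is there an argument reducing the conjecture to a known result (finiteness of the truncated susceptibility, \cref{conj:susceptibility}, is itself open, so routing through a Hammersley--Simon--Lieb argument does not close the gap either). Thus the proposal does not establish the statement; the amenable case remains exactly as open as the paper asserts, and the concrete obstruction is the absence of any quantitative local supercriticality (Grimmett--Marstrand analogue) or surface-counting estimate on general amenable transitive graphs.
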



Finally, we conjecture that the infinite clusters in supercritical percolation on $G$ always inherit an anchored version of any isoperimetric
 inequality satisfied by $G$. 

\begin{conjecture}
\label{conj:isoperimetry}
Let $G=(V,E)$ be an infinite, connected, locally finite, transitive graph, and let $p_c<p \leq 1$. Then there exists a positive constant $c_p$ such that
\[
\lim_{n\to \infty} \inf\left\{\frac{|\partial_\omega H|}{\psi(G, c_p n)} : H \text{ a connected subgraph of $K_v$ containing $v$ with } n\leq |E(H)|<\infty\right\} >0
\]
almost surely on the event that $K_v$ is infinite.
\end{conjecture}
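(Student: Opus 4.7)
\medskip\noindent\textbf{Proof proposal.} The conjecture splits into two very different regimes. When $G$ is nonamenable, $\psi(G,t)=\Theta(t)$ and the statement follows directly from \cref{cor:anchored_expansion}. Writing $M$ for the maximum degree of $G$ and $\phi:=\Phi^*_E(K_v)$, which is strictly positive almost surely on $\{|K_v|=\infty\}$ by that corollary, every edge of $K_v$ with one endpoint in $V(H)$ and one outside $V(H)$ belongs to $\partial_\omega H$, so for any connected subgraph $H\subseteq K_v$ containing $v$ with $|V(H)|$ sufficiently large one has
\[
|\partial_\omega H|\;\ge\;\phi\sum_{u\in V(H)}\deg_{K_v}(u)\;\ge\;\phi|V(H)|\;\ge\;\frac{\phi}{M}|E(H)|.
\]
Combined with the trivial inequality $\psi(G,c_p n)\le c_p n$, this gives a uniform positive lower bound for $|\partial_\omega H|/\psi(G,c_p n)$, completing the nonamenable case.

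The content of the conjecture therefore lies in the amenable case, where $\zeta(p)=0$ by \cref{cor:amenable} and the isoperimetric profile is sublinear. The plan is to adapt the strategy of \cref{prop:anchored_explicit}, replacing the linear lower bound $|\partial H|\ge \alpha n$ by the deterministic isoperimetric inequality $|\partial H|\ge \psi(G,n)$, which follows from $\sum_{u\in V(H)}\deg_G(u)\ge |E(H)|=n$. As in the anchored-expansion proof, one starts from the identity
\[
\bP_p\bigl(H\subseteq K_v,\,|\partial_\omega H|\le m\bigr)\;=\;\sum_{k=0}^{m}\binom{|\partial H|}{k}\!\left(\frac{p}{1-p}\right)^{\!k}\bP_p(K_v=H),
\]
sums over $H\in\sH_v^n$, and applies Borel--Cantelli to the union over $n$ of the events $\{\exists H\in\sH_v^n:\,H\subseteq K_v,\,|\partial_\omega H|\le c_p\psi(G,n)\}$ with $c_p$ small. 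Maximised over $|\partial H|\in[\psi(G,n),n]$, the combinatorial factor is roughly $\binom{n}{c_p\psi(G,n)}(p/(1-p))^{c_p\psi(G,n)}$, which is of order $\exp(Cc_p\psi(G,n)\log(n/\psi(G,n)))$ in the worst case $|\partial H|\asymp n$.

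The main obstacle is twofold. First, no exponential (or appropriately subexponential) tail bound for $\bP_p(n\le E_v<\infty)$ is presently available in the amenable case; the required input is essentially the upper bound of \cref{conj:generaltail}, which is itself open outside the Euclidean setting. Second, and more seriously, even granting such a bound of the form $\bP_p(n\le E_v<\infty)\le \exp(-c_p\psi(G,c_p n))$, the combinatorial factor above exceeds the available tail decay by a multiplicative logarithmic factor $\log(n/\psi(G,n))$, so a direct union bound does not close. To remedy this one should show that subgraphs $H\subseteq K_v$ with atypically small $|\partial_\omega H|$ are forced to have $|\partial H|$ itself close to the isoperimetric minimum $\psi(G,|E(H)|)$, restricting the enumeration to near-extremal isoperimetric shapes. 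Carrying this out---possibly by adapting the leaf/bridge induction of \cref{subsec:positive_term} to control the \emph{isoperimetric deficit} $|\partial H|-\psi(G,|E(H)|)$ in place of the number of open bridges, in tandem with a proof of \cref{conj:generaltail}---appears to require genuinely new ideas in the absence of any renormalisation argument analogous to Grimmett--Marstrand.
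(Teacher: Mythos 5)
The statement you are proving is \cref{conj:isoperimetry}, which the paper states as an open \emph{conjecture}: there is no proof of it in the paper to compare against. The authors only record that the nonamenable case follows from \cref{cor:anchored_expansion}, that the case $G=\Z^d$ is due to Pete, and that for Cayley graphs of one-ended finitely presented groups and $p$ close to $1$ the statement can be obtained by a Peierls argument. Your treatment of the nonamenable case is correct and is essentially the same observation the paper makes: since $\psi(G,t)$ grows linearly when $\Phi_E(G)>0$ and every edge of $K_v$ leaving $V(H)$ lies in $\partial_\omega H$, the anchored expansion of $K_v$ from \cref{cor:anchored_expansion} (together with $|V(H)|\ge |E(H)|/M$ and $\psi(G,c_p n)\le c_p n + M$) gives the required uniform lower bound on the ratio. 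The only cosmetic caveat is that $\Phi^*_E(K_v)$ is defined via a limit of infima, so you should phrase the bound as holding with a slightly smaller constant for all sufficiently large $|V(H)|$, which is exactly what the almost-sure statement requires.

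For the amenable case your proposal is an honest assessment rather than a proof, and you correctly identify why the natural adaptation of \cref{prop:anchored_explicit} does not close: the first-moment/union-bound scheme needs as input a tail bound of the strength of the (open) upper bound in \cref{conj:generaltail}, and even granting that, the entropy factor $\binom{n}{c_p\psi(G,n)}$ coming from subgraphs with $|\partial H|\asymp n$ overwhelms the available decay by a factor $\log(n/\psi(G,n))$ unless one can restrict attention to near-isoperimetrically-extremal shapes. (Note also that $\zeta(p)=0$ in the amenable supercritical regime by \cref{cor:amenable}, so the exponential-tail machinery behind \cref{thm:main} is unavailable there by design.) This matches the state of the art: in $\Z^d$ Pete's proof goes through renormalization, which has no analogue here, and the general amenable case remains open. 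So your submission does not prove the stated conjecture, but it is not in conflict with the paper, which leaves the same case open; just make sure you present the amenable part explicitly as a discussion of obstructions rather than as part of a proof.
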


The nonamenable case of this conjecture is implied by \cref{cor:anchored_expansion}, while the case $G=\Z^d$ was established by Pete \cite{Pete08}. Similarly to above, for Cayley graphs of one-ended, finitely presented groups and $p$ close to $1$ the conjecture may be established via a Peierls argument, see \cite[Theorem 1.5]{Pete08}. We remark that \cref{conj:generaltail,conj:isoperimetry} are also closely related to Pete's \emph{exponential cluster repulsion} conjecture \cite[Conjecture 12.32]{Pete}.

\subsection{Other models}

It would be very interesting (and seemingly non-trivial) to extend our analysis to dependent percolation models such as the random cluster model. One can also wonder whether our main theorems hold for an \emph{arbitrary} automorphism-invariant percolation process that is subjected to Bernoulli noise.

\begin{question}
Let $G=(V,E)$ be an infinite, connected, locally finite, nonamenable transitive graph. Let $\omega \in \{0,1\}^E$ be an ergodic, automorphism-invariant percolation process on $G$, let $\eta \in \{0,1\}^E$ be an independent Bernoulli process in which each edge is included independently at random with probability $\eps>0$, and suppose that the symmetric difference $\omega \Delta \eta$ has infinite clusters almost surely. 
\begin{enumerate}
\item Is the origin exponentially unlikely to be in a large finite cluster of $\omega \Delta \eta$?
\item Does every infinite cluster of $\omega \Delta \eta$ have anchored expansion almost surely?
\end{enumerate}
\end{question}

Note that most of our analysis extends straightforwardly to such a model; it seems that the only part of our proof that breaks down badly is in the proof of \cref{prop:NegativeTerm}, where we use that conditioning on $K_v$ does not affect the distribution of the restriction of the process to the complement of $E(K_v)$.
\subsection*{Acknowledgments}

We thank the anonymous referees for their close reading and helpful suggestions.

 \setstretch{1}
 \footnotesize{
  \bibliographystyle{abbrv}
  \bibliography{unimodularthesis.bib}
  }
\end{document}